\newcommand{\pr}{\mathrm{pr}}
\newcommand{\Hom}{\operatorname{Hom}}
\DeclareMathOperator{\Ho}{Ho}
\DeclareMathOperator{\cQ}{\mathcal{Q}}
\newcommand{\cW}{\mathcal{W}}
\newcommand{\cR}{\mathcal{R}}
\newcommand{\cB}{\mathcal{B}}
\newcommand{\cA}{\mathcal{A}}
\newcommand{\cM}{\mathcal{M}}
\newcommand{\cC}{C}
\newcommand{\cD}{D}
\newcommand{\cN}{\mathcal{N}}
\newcommand{\cL}{\mathcal{L}}
\newcommand{\cZ}{\mathcal{Z}}
\newcommand{\lra}{\rightarrow}
\newcommand{\tZ}{\widetilde{Z}}
\newcommand{\tB}{\widetilde{B}}
\newcommand{\bbZ}{\mathbb{Z}}
\newcommand{\Xb}[1]{X_{\beta_{#1}}}
\newcommand{\Xa}[1]{X_{\alpha_{#1}}}
\newcommand{\Xz}[1]{X_{\zeta_{#1}}}
\newcommand{\Yb}[1]{Y_{\beta_{#1}}}
\newcommand{\Ya}[1]{Y_{\alpha_{#1}}}
\newcommand{\Yz}[1]{Y_{\zeta_{#1}}}
\newcommand{\Eg}[1]{E_{\gamma_{#1}}}
\newcommand{\Ez}[1]{E_{\zeta_{#1}}}
\newcommand{\Eb}[1]{E_{\beta_{#1}}}
\newcommand{\tA}{\widetilde{A}}
\newcommand{\tk}{\widetilde{k}}
\newcommand{\prog}[1]{\pr/\gamma_{#1}}
\newcommand{\proz}[1]{\pr/\zeta_{#1}}
\newcommand{\E}{E(1)_*}
 \DeclareMathOperator\coker{coker}
\newcommand{\ie}{i.e.}
\newcommand{\eg}{e.g.}
\newcommand{\cAp}{\mathcal{A}_{\mathrm{proj}}}
\newcommand{\opp}{\mathrm{op}}
\newcommand{\bbL}{\mathbb{L}}
\DeclareMathOperator{\pro}{pr}
\newcommand {\Oplus}{\bigoplus\limits}
\newcommand{\boxprod}{\mathbin\square}
\DeclareMathOperator{\Fun}{Fun}
\newcommand{\Dopp}{\Delta^{\opp}}
\DeclareMathOperator{\Map}{Map}
\DeclareMathOperator{\Ch}{Ch}
\newcommand{\sSet}{s\mathrm{Set}}
\newcommand{\Set}{\mathrm{Set}}
\newcommand{\Sp}{\mathrm{Sp}}
\newcommand{\x}{\times}
\newcommand{\ox}{\otimes}
\newcommand{\twc}{\mathrm{C}^{([1],1)}(\cA)}
\newcommand{\dtwc}{\mathrm{D}^{([1],1)}(\cA) }
\DeclareMathOperator{\Lan}{Lan}
\DeclareMathOperator*{\colim}{colim}
\DeclareMathOperator*{\hocolim}{hocolim}
\DeclareMathOperator*{\hocofib}{hocofib}
\DeclareMathOperator*{\holim}{holim}
\DeclareMathOperator{\cone}{cone}
\DeclareMathOperator{\srep}{srep}
\DeclareMathOperator{\cofib}{cofib}
\numberwithin{equation}{subsection}
\newtheorem{thm}[equation]{Theorem}
\newtheorem{lemma}[equation]{Lemma}
\newtheorem{prop}[equation]{Proposition}
\newtheorem{cor}[equation]{Corollary}
\theoremstyle{definition}
\newtheorem{defn}[equation]{Definition}
\newtheorem{conv}[equation]{Convention}
\newtheorem{remark}[equation]{Remark}
\newtheorem{example}[equation]{Example}
\begin{document}

\title[]
{Monoidal Properties of Franke's Exotic Equivalence}

\author{Nikitas Nikandros}
\address[N. Nikandros]{University of Kent \\ School of Mathematics, Statistics and Actuarial Science\\ Sibson building \\ Canterbury, CT2 7FS, UK}
\email{nnikandros@gmail.com}

\author{Constanze Roitzheim}
\address[C. Roitzheim]{University of Kent \\ School of Mathematics, Statistics and Actuarial Science\\ Sibson building \\ Canterbury, CT2 7FS, UK}
\email{c.roitzheim@kent.ac.uk}

\begin{abstract}
Franke's reconstruction functor $\cR$ is known to provide examples of triangulated equivalences between homotopy categories of stable model categories, which are exotic in the sense that the underlying model categories are not Quillen equivalent. We show that, while not being a tensor-triangulated functor in general, $\cR$ is compatible with monoidal products.
\end{abstract}

\maketitle

\section{Introduction}

For several decades, Franke's exotic equivalence has been fascinating to homotopy theorists, as it is a rare example of a machinery that provides an equivalence up to homotopy between two model categories which are not Quillen equivalent. In practice, the known situations where Franke's construction can be applied to obtain the equivalence 
\[
\cR\colon \dtwc \longrightarrow \Ho(\cM)
\]
link an algebraic model category ($\dtwc$ is the derived category of a flavour of chain complexes in a suitable abelian category $\cA$) with a stable model category $\cM$ which is not necessarily algebraic. 
Key examples include 
\begin{itemize}
\item $\cA$ the category of $\pi_*(R)$-modules for a ring spectrum $R$ and $\cM$ the category of modules over $R$, together with some extra assumption on the projective dimension of $\pi_*(R)$ as well as $\pi_*(R)$ being concentrated in degrees that are multiples of some $N>1$, 
\item $\cA$ the category of $E(1)_*E(1)$-comodules and $\cM$ the category of $K$-local spectra at an odd prime.
\end{itemize}
In this paper, we will always assume that $\cR$ exists and is an equivalence.

Both the algebraic side $\dtwc$ and the topological side $\Ho(\cM)$ are equipped with monoidal structures derived from the monoidal model category structures on $\twc$ and $\cM$, so it is only natural to consider whether $\cR$ is compatible with these. But as $\cR$ is not derived from a Quillen functor $\twc \longrightarrow \cM$, this problem requires a different approach working closely with the construction of $\cR$ itself. 

The example of $K$-local spectra at $p=3$ tells us that we cannot expect $\cR$ to be a monoidal functor in general: the preimage of the mod-3 Moore spectrum is a chain complex that is a monoid, whereas the mod-3 Moore spectrum has no associative multiplication \cite[Remark 1.4.2]{Nora}.
However, we obtain the following, which is the main result of this article.

\begin{thm}
Let $(\cM,\wedge)$ be a simplicial stable monoidal model category and let $(\cA,\otimes)$ be a hereditary abelian monoidal category with enough projectives such that
Franke's reconstruction functor $\cR$ exists and is an equivalence. Then 
\[ \cR\colon (\dtwc,\otimes^{\bbL}) \lra (\Ho(\cM),\wedge^{\bbL}) \]
commutes with the respective monoidal products up to a natural isomorphism
\[ \cR(M_*\otimes^{\bbL} N_*) \cong \cR(M_*)\wedge^{\bbL} \cR(N_*).   \]
\end{thm}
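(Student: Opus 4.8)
The plan is to exploit the explicit construction of $\cR$ rather than any abstract property it might enjoy. Recall that $\cR$ is obtained as a composite
\[
\cR\colon \dtwc \xrightarrow{\ \simeq\ } \Ho(\cE) \xrightarrow{\ L\ } \Ho(\cM),
\]
where $\cE$ is an auxiliary model category of ``twisted'' diagrams in $\cM$ (the topological counterpart of $\twc$), the first arrow is the equivalence produced from the hereditary hypothesis on $\cA$, and $L$ is a left Quillen functor, essentially a homotopy colimit along the indexing category. First I would equip $\cE$ with the pointwise (Day convolution) monoidal model structure coming from $(\cM,\wedge)$ --- this is where simpliciality of $\cM$ is used, to transport the pushout--product and unit axioms to diagrams --- and record its derived product $\barotimes$ on $\Ho(\cE)$. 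The aim is then to produce a natural transformation
\[
\eta_{M_*,N_*}\colon \cR(M_*)\wedge^{\bbL}\cR(N_*)\ \lra\ \cR(M_*\otimes^{\bbL}N_*),
\]
obtained by composing the comparison maps contributed by the two stages, and to prove it is an isomorphism. Since $\cR$ is an equivalence, $\eta_{M_*,N_*}$ is invertible exactly when $\cR^{-1}(\eta_{M_*,N_*})$ is, so the verification can be carried out inside $\dtwc$, where a morphism is an isomorphism as soon as it induces an isomorphism on homology objects.

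The colimit stage contributes the easy half. Being a left adjoint, $L$ commutes with homotopy colimits; in a monoidal model category the smash product commutes with homotopy colimits separately in each variable; and $L$ sends the convolution unit to the monoidal unit of $\Ho(\cM)$. Together these give a natural isomorphism $L(X\barotimes Y)\cong L(X)\wedge^{\bbL}L(Y)$, and the whole problem reduces to the equivalence $\dtwc\simeq\Ho(\cE)$: one must compare the derived convolution product $\barotimes$, transported to $\dtwc$, with $\otimes^{\bbL}$. At this stage I do \emph{not} expect coherence --- it is precisely here that $\cR$ fails to be tensor-triangulated, as the mod-$3$ Moore spectrum shows --- so I only aim for the object-level natural isomorphism that the statement requires.

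To carry out that comparison I would reduce to compact generators and then proceed by d\'evissage. The category $\dtwc$ is compactly generated by the objects $[P,i]$ obtained by placing a projective $P$ of $\cA$ in internal degree $i$, and both $\otimes^{\bbL}$ and the transported $\barotimes$ are exact in each variable and preserve coproducts; so, once $\eta$ is available as a genuine natural transformation, it suffices to check it on the $[P,i]$, after which the full subcategories on which $\eta_{M_*,N_*}$ is invertible (for fixed $M_*$, then for fixed $N_*$) are localizing and contain the generators, hence are everything. On the generators the algebraic side is explicit: heredity keeps any derived correction to a single $\operatorname{Tor}_1$ term, so $[P,i]\otimes^{\bbL}[Q,j]$ is computed directly. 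On the topological side one evaluates $\cR[P,i]\wedge^{\bbL}\cR[Q,j]$ through its homology: the K\"unneth/$\operatorname{Tor}$ spectral sequence for the homology theory of a smash product collapses --- because $\cA$ is hereditary only the $\operatorname{Tor}_0$ and $\operatorname{Tor}_1$ columns survive --- to a short exact sequence of exactly the shape produced by $\otimes^{\bbL}$ in $\dtwc$, and a map between such two-stage objects which is an isomorphism on homology is an isomorphism.

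The main obstacle, and the genuine content, is the construction of $\eta$ at the rigidification stage: building the comparison map for the inverse-to-homology half of $\dtwc\simeq\Ho(\cE)$ while tracking the twisting datum (the ``$([1],1)$'') through the convolution product. Concretely, one must choose --- functorially enough --- lifts of $M_*$ and $N_*$ to $\cM$-diagrams whose levelwise smash realizes $M_*\otimes^{\bbL}N_*$; these choices cannot be made coherently, since the $p=3$ obstruction forbids it, but they can be made well enough to define a natural map, and heredity guarantees that the relevant Postnikov-type resolutions have only two stages, so that matching two homology pieces together with one connecting map is all that is at stake. Once $\eta$ is in hand, the proof that it is an isomorphism is the K\"unneth computation above, and assembling the two stages produces the asserted natural isomorphism $\cR(M_*\otimes^{\bbL}N_*)\cong\cR(M_*)\wedge^{\bbL}\cR(N_*)$.
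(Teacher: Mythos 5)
Your overall architecture does mirror the paper's: factor $\cR$ as an equivalence $\cQ^{-1}$ into a category of diagrams in $\cM$ followed by a homotopy colimit, dispose of the colimit stage by commutation properties, and concentrate the difficulty at the rigidification stage. Two caveats on the colimit half: the product the paper puts on diagrams is \emph{not} a pointwise or Day convolution product but $i^*\bbL\pr_!(-\wedge^{\bbL}-)$, where $\pr\colon\cC_N\times\cC_N\to\cD_N$ lands in a strictly larger poset and $i\colon\cC_N\to\cD_N$ is a non-identity inclusion; identifying $\hocolim_{\cC_N}i^*E$ with $\hocolim_{\cD_N}E$ then requires showing $i$ is homotopy final, which is not formal from adjointness of $L$.

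The genuine gap is exactly where you yourself locate ``the main obstacle'': you never construct the comparison at the diagram level, you only assert that the lifts ``can be made well enough to define a natural map.'' That assertion is the entire content of the paper's Section 3. What is actually needed is (i) the explicit bifunctor $i^*\bbL\pr_!(X\wedge^{\bbL}Y)$ on crowned diagrams; (ii) a proof that its values again lie in the subcategory $\cL$ on which $\cQ$ is defined --- this is a nontrivial computation of the homology of the slice categories of $\pr$ with coefficients in $F_*(X\wedge^{\bbL}Y)$, feeding a collapsing spectral sequence, and it uses both projectivity of the $F_*(X_{\alpha_n})$ and the fact that pushout-products of monomorphisms between projectives are monomorphisms in a hereditary category; and (iii) an identification of $\cQ$ of this object with $\cQ(X)\otimes\cQ(Y)$, which splits into identifying the underlying graded objects (via cones of the structure maps, computed as homotopy colimits of objectwise cones of a counit $\bbL\phi_!\phi^*\to\mathrm{id}$) and identifying the differential (by reduction to disk complexes $D^s(L_s)$ --- close in spirit to your generator reduction --- followed by an explicit chase showing the boundary map is the diagonal with the Koszul sign). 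Your K\"unneth/d\'evissage argument can only certify that a map already in hand is an isomorphism; it cannot produce the map, and in particular it cannot by itself recover the differential on the candidate for $\cQ(X)\otimes\cQ(Y)$, which is the delicate point. Without (i)--(iii) the proof does not close.
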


The reconstruction functor $\cR$ rebuilds $\cM$ from algebraic data in the following way. Firstly, part of the assumptions on $\cA$ is that it splits into shifted copies of a smaller abelian category $\cB$. This is then used to split an object of $\twc$ into pieces, which are placed in certain crown-shaped diagram $\cC_N$. Using this piecewise data, one then constructs a $C_N$-shaped diagram in $\cM$. Finally, the homotopy colimit over $C_N$ is applied to get to $\Ho(\cM)$. Specifically, $\cR$ is the composite
\[
\cR: \dtwc \xrightarrow{\cQ^{-1}} \cL \subseteq \Ho(\cM^{\cC_N}) \xrightarrow{\hocolim_{\cC_N}} \Ho(\mathcal{M}).
\]
We therefore take the diagram
\[
\xymatrix{  \dtwc \times \dtwc \ar[d]_{\otimes^{\bbL}} \ar[rr]^{\cR \wedge^{\bbL} \cR} & & \Ho(\cM) \times \Ho(\cM) \ar[d]_{\wedge^\mathbb{L}} \\ 
\dtwc \ar[rr]^{\cR} & & \Ho(\cM)
}
\]
which we would like to show to be commutative
and refine it in the way below in order to deal with the different components of $\cR$ separately.

\begin{eqnarray}\label{bigdiagram}
\xymatrix{ & & & & \\
 \dtwc \times \dtwc \ar[ddd]_{-\otimes^\mathbb{L} - }  \ar@<0.5ex>[rr]&   & \ar[rr]  \Ho(\cM^{\cC_N})\times \Ho(\cM^{\cC_N}) \ar@<0.5ex>[ll] \ar[d]_{\wedge^\mathbb{L}}^{\cong} & & \Ho(\cM) \ar@{=}[ddd] \\
& & \ar[d]_{\mathbb{L}\pro_!=\Ho\Lan_{\pro}}  \Ho(\cM^{\cC_N\times \cC_N}) \ar[urr]^{\hocolim} & & \\
& & \ar[d]_{i^*} \Ho(\cM^{\cD_N}) \ar[uurr]_{\hocolim_{\cD_N}} & & \\
\dtwc   \ar@<0.5ex>[rr]^{\mathcal{Q}^{-1}}  & &\Ho(\cM^{\cC_N}) \ar@<0.5ex>[ll]^{\mathcal{Q}} \ar[rr]_{\hocolim_{\cC_N}} & & \Ho(\cM)
}
\end{eqnarray}

Here, $\cD_N$ denotes a suitable modification of the crown-shaped diagram $\cC_N$ together with an inclusion $i: \cC_N \longrightarrow \cD_N$. (All the ingredients will of course be defined in detail where appropriate.) 
The outline of our proof roughly follows the key points of \cite{Nora}; however, we choose to work in a setting of model categories, which makes our exposition more explicit and straightforward. 
Overall, we have relied on contemporary methods and we refer to modern literature. Our techniques put Ganter’s theorem in firm rigorous footing and in better context with other existing literature, as well as hopefully making it more adaptable to future generalisations.

This paper is organised as follows.

In Section \ref{sec:prelims} we recall the background and tools that we need for our main result and proof, namely simplicial replacements, homotopy Kan extensions, monoidal structures on diagram model categories, a specific mapping cone construction, calculating homotopy colimits using the homology of a category with coefficients in a functor, as well as a recap of the construction of Franke's functor.

In Section \ref{sec:monoidalQ} we will begin by setting up one of our main results, which involves working out the middle vertical part of the diagram \ref{bigdiagram}. The key ingredient is given by a spectral sequence argument calculating the vertices of the functor $\mathbb{L}\pro_!(X \wedge Y)$. We will then feed this into the definition of Franke's functor $\cQ$ in order to obtain the necessarily formulas for monoidality on the left hand side of \ref{bigdiagram}, dealing with underlying graded modules of the twisted chain complexes and the differentials separately. 

Section \ref{sec:mainresult} now wraps up the right hand side of the diagram \ref{bigdiagram} which mostly involves standard properties of homotopy colimits. We can finally assemble these results into the proof of the main theorem and finish with some examples.

\section*{Acknowledgments}
This paper is based on the PhD thesis of the first author under the supervision of the second author. We would like to acknowledge EPSRC grant EP/R513246/1 for funding this project.
Furthermore, we would like to thank Nora Ganter, Irakli Patchkoria and Neil Strickland for helpful comments and support.
%This paper is based on my Phd Thesis at the University of Kent. I would like to thank my supervisor Constanze Roitzheim for suggesting the topic and her help and encouragement thoughout. Also I would like to deeply thank Irakli Patchkoria for all his support, mathematical-wise.  Many thanks also to Nora Ganter for helpful correspondence 

\section{Preliminaries}\label{sec:prelims}
%\subsection{Notation}

In this section we will introduce some of the terminology that we need for our result. We assume that the reader is familiar with the basic background regarding simplicial sets, homological algebra and model categories.

The category of simplicial sets is denoted by $s\Set$. For $n\geq 0, \Delta^n $ denotes the standard $n$-simplex. For an arbitrary category $\mathcal{C}$, the notation $s\mathcal{C}$ stands for the simplicial objects in $\mathcal{C}$, i.e., $s\mathcal{C}=\Fun(\Dopp,\mathcal{C})$. We have $I =\Delta^1$ and $I_+ = \Delta^1 \cup \ast$ and $S^0= \Delta^0 \cup \ast.$ Similarly, $S^1$ stands for the simplicial circle $I/(0\sim 1)$, that is, $\Delta^1/\partial \Delta^1$.

We will let $\cA$ be a graded ($\bbZ$-graded) abelian category, which means that $\cA$
possesses a shift functor $[1]$ which is an equivalence of categories, and
$[n]$ denotes the $n$-fold iteration of $[1]$. The \emph{graded global homological dimension} of $\cA$, $\operatorname{gl.dim} \cA$, is the supremum of the projective dimensions of objects in $\cA$. An abelian category $\cA$ is called \emph{hereditary} if $\operatorname{gl.dim} \cA=1$. There are other, equivalent descriptions of hereditary abelian categories but this one suits our purposes best.

\subsection{Model Categories}

We will now set up our background on model categories. We write any cofibrant replacement functor $Q\colon \cM\lra \cM$ that comes with a natural weak equivalence $q\colon Q\lra 1_{\cM}$.
\begin{conv}\label{homotopycategoryofM}
	We let $\Ho(\cM)$ denote the category $\cM_{\mathrm{cof}}[\cW^{-1}]$ where $\cM_{\mathrm{cof}}$ denotes the full subcategory of cofibrant objects of $\cM$, and we denote the set of morphisms in $\Ho(\cM)$ by $[X,Y].$
\end{conv}
Convention \ref{homotopycategoryofM} allows us to provide a very simple description of the \emph{left derived functor} $\bbL F$ of a left Quillen functor $F: \cM \longrightarrow \cN$. Indeed, the functor
\[F\left.\right|_{\cM_{\text{cof}}}\colon \cM_{\text{cof}}\lra \cN_{\text{cof}}    \]
preserves weak equivalences and, therefore, it induces a functor between the localizations. This functor is precisely $\bbL F$ with our convention.

Finally, an important class of model categories is the class of \emph{simplicial model categories}. These are model categories which are enriched, tensored and cotensored over $\sSet$ and which satisfy the pushout-product axiom (SM7). If a 
simplicial model category is pointed, \ie, the terminal object is isomorphic to the initial one, then $\cM$ is enriched over the category $\sSet_*$ of pointed simplicial sets. In particular, we have functors
\[ -\otimes-\colon \sSet_*\times \cM\lra \cM, \ \ \  \Map_{\cM}(-.-):\cM^{\opp}\times \cM\lra \sSet_*, \]
and the adjunction
\[\Hom_{\cM}(K\wedge X,Y)\cong \Hom_{\sSet}(K,\Map_{\cM}(X,Y)),   \]
see \cite[Definition 6.1.28]{BRfound} and \cite[Section 11.4]{Riehl}.

\subsubsection{Diagram Categories}
We will use model structures on diagram categories throughout the paper. Below we introduce the definition of a direct category which is a generalization of the concept of a poset, see \cite[Definition 5.1.1]{Hovey} for further details. . %The structure of a direct category allows us to be more explicit with the cofibrations in the \emph{projective} model structure.
\begin{defn}\label{direct}
	Let $\omega$ denote the poset category of the ordered set $\left\{0,1,2,\ldots \right\}$. A small category $J$ is called \emph{direct} if there is a functor $f\colon J\lra \omega$ that sends non-identity morphisms to non-identity morphisms. We refer to $f(j)$ as the \emph{degree} of the object $j$. Dually, $J$ is an \emph{inverse} category if there is a functor $J^{\opp} \to \omega$ that 
	sends non-identity morphisms to non-identity morphisms
\end{defn}
Any finite poset $J$ is a direct category, and dually $J^{\opp}$ is an inverse category. We provide some examples that will be useful later on.
\begin{defn}\label{latchdirect}
	Suppose $\cM$ is a small category with small colimits, $J$ a small category, $z$ an object in $J$ and $J_z$ the category of all non-identity morphisms with codomain $z$. The \emph{latching space 
		functor } $L_z\colon \cM^{J}\lra \cM$ is the composition
	\[ \cM^{J}\lra \cM^{J_z} \xrightarrow{\colim} \cM,    \]
	where the first arrow is the restriction functor. 
	Equivalently the latching space of a diagram $X$ is given by
\[L_zX= \colim\left( J_z \hookrightarrow J \xrightarrow{X} \cM  \right),  \]
where $J_z \hookrightarrow J $ is the inclusion. 
	\end{defn}
Note that we have a natural transformation
	$ L_zX\lra X_z $
	for any fixed object $z\in J$.

We can now describe the \emph{projective model structure} on $\cM^J$, see \cite[Theorem 5.1.3]{Hovey}.
\begin{prop}\label{modelondiagrams}
	Given a model category $\cM$ and a direct category $J$, there is a model structure on $\cM^{J}$ in which a morphism $f\colon X\lra Y$ is a weak equivalence (resp. fibration) if and only if the map $f_z\colon X_z\lra Y_z$ 
	is a weak 
	equivalence (resp. fibration) for all $z\in J$. Furthermore, $f\colon X\lra Y$ is an (acyclic) cofibration if and only if the induced map 
	\begin{equation*}
		X_z\coprod_{L_zX}L_zY\lra Y_z
	\end{equation*} 
	is an (acyclic) cofibration for all $z\in J$.
\end{prop}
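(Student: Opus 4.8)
The plan is to verify the model category axioms for the three indicated classes of maps. Since $J$ is direct this is the Reedy model structure on $\cM^J$ with trivial matching objects, so in practice one may simply cite \cite[Theorem 5.1.3]{Hovey}; I outline the argument. First I would fix a degree function $f\colon J\to\omega$ as in Definition~\ref{direct}. Two-out-of-three and closure under retracts for the objectwise weak equivalences are inherited from $\cM$, and closure under retracts both of the objectwise fibrations and of the maps whose relative latching map $X_z\coprod_{L_zX}L_zY\to Y_z$ is a cofibration is immediate, since colimits, limits and retracts in $\cM^J$ are computed objectwise. Hence the content lies in the two factorization axioms and the two lifting axioms, and I would establish each by induction along the degree function, using at every stage the relevant factorization or lifting axiom in $\cM$ together with the natural transformation $L_zX\to X_z$ from Definition~\ref{latchdirect}.

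For a factorization of $g\colon X\to Y$ I would build the intermediate diagram $Z$ one degree at a time. Assuming $Z$ and compatible maps $X\to Z\to Y$ have been defined on all objects of degree $<n$, the latching object $L_zZ$ is already available for each $z$ of degree $n$ (it involves only lower degrees), and I would factor the canonical map $X_z\coprod_{L_zX}L_zZ\to Y_z$ in $\cM$ as a cofibration followed by an acyclic fibration (resp.\ an acyclic cofibration followed by a fibration), declaring $Z_z$ to be the intermediate object. Since distinct objects of the same degree admit no non-identity morphism between them, these choices patch into a genuine functor $Z\in\cM^J$; by construction the relative latching map of $X\to Z$ at $z$ is the chosen cofibration and $Z\to Y$ is an objectwise (acyclic) fibration, which is exactly what the factorization requires, and it is functorial if the chosen factorizations in $\cM$ are. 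For the lifting axioms I would argue the same way: given a square with left edge a characterised cofibration, right edge an objectwise fibration, and one of the two a weak equivalence, the diagonal is constructed degreewise, the extension problem over an object $z$ of degree $n$ being precisely the lifting problem in $\cM$ of the relative latching map $X_z\coprod_{L_zX}L_zY\to Y_z$ against the fibration $X_z\to Y_z$, which is solvable because that map is a cofibration, acyclic whenever the left edge is.

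The step I expect to be the main obstacle is reconciling the two descriptions of the acyclic cofibrations, namely that a characterised cofibration which is an objectwise weak equivalence automatically has all of its relative latching maps acyclic. I would handle this by a further induction on degree: if the relative latching maps of $g$ at all objects of degree $<n$ are acyclic cofibrations, then for $z$ of degree $n$ the map $L_zX\to L_zY$ is assembled from these by pushouts and transfinite composition, hence is itself an acyclic cofibration; therefore $X_z\to X_z\coprod_{L_zX}L_zY$ is an acyclic cofibration, and since $X_z\to Y_z$ is a weak equivalence, two-out-of-three forces $X_z\coprod_{L_zX}L_zY\to Y_z$ to be a weak equivalence, hence --- being a cofibration by hypothesis --- an acyclic cofibration. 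Together with the factorization and lifting arguments above this yields all of the model category axioms; the full details, and the general Reedy case, are in \cite[Theorem 5.1.3]{Hovey}.
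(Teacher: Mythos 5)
Your proposal is correct and follows exactly the route the paper takes: the paper offers no proof of Proposition~\ref{modelondiagrams} beyond citing \cite[Theorem 5.1.3]{Hovey}, and your outline is a faithful reproduction of Hovey's standard degreewise-induction argument for the Reedy (here, projective) model structure on $\cM^J$ with trivial matching objects. The only blemish is notational: in the lifting step the map to be lifted against is the component of the \emph{right} edge of the square, not of the map $X\to Y$ you had just named as the left edge, but the intended argument is clearly the correct one.
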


We will now give the finite posets $J$ that are going to play a central role throughout this paper.

\begin{example}\label{interval}
	By $[1]$ we denote the poset $0\leq 1$. We are aware that early in this section we also denoted the shift functor on graded objects. Both are standard notation, and from our use of the poset $0 \leq 1$ there is vanishingly little danger of confusing those two.
\end{example}

\begin{example}\label{prepushout}
	Consider the poset
	\[\xymatrix{  (0,0)\ar[r] \ar[d] & (1,0) \\
		(0,1)           &  }\]
	denoted by $\ulcorner$. Let $\iota\colon [1]\lra \ulcorner$ be the map of posets which sends $0$ to $(0,0)$ and $1$ to $(1,0)$. In other words, $\iota$ includes the interval $[1]$ to the top horizontal line. Furthermore, consider the product of the interval posets $[1]\times [1]$. It is the following poset
	\[\xymatrix{ (0,0) \ar[d] \ar[r] & (1,0)\ar[d] \\
		(0,1)  \ar[r]      & (1,1)  }     \]	
	and we let $i_{\ulcorner}\colon \ulcorner\lra [1]\times [1]$ the inclusion. 					
\end{example}

\begin{example}\label{CN}
Let $N \ge 2$ be a natural number. The poset $\cC_N$ consists of elements $\left\{\beta_i,\zeta_i\left|\right. i\in\bbZ/N\bbZ\right\}$ such that $\beta_i<\zeta_i$ and $\beta_{i}<\zeta_{i+1}$ for $i\in \bbZ/N\bbZ$, \ie, 
\begin{equation*}\xymatrix{  \zeta_0  &  \zeta_1  & \ldots & \zeta_{N-1}   \\ 
              \beta_0 \ar[u] \ar[ur] & \ar[u]  \ar[ur] \beta_1   & \ar[ur] \ldots & \ar[u] \ar[ulll] \beta_{N-1}.       }
							\end{equation*}
Then $X\in \cM^{\cC_N}$ is  cofibrant if and only if the canonical map
$L_z X\lra X_z$
is a cofibration in $\cM$, \ie, if and only if the $\Xb{i}, \Xz{i}$ are cofibrant and the induced morphism $$\Xb{i-1}\vee \Xb{i}\lra \Xz{i}$$ is a cofibration, where $\vee$ is the coproduct in $\cM$. We will refer to an object $X\in \cM^{\cC_N}$ as a \emph{crowned diagram} due to the crown shape of the diagram $\cC_N$. 
\end{example}

\begin{example}\label{DN}
Let $\cD_N$ be the poset consisting of elements $\left\{\beta_n,\gamma_n, \zeta_n \colon n\in \bbZ/N\bbZ  \right\}$ such that $\beta_n \leq \gamma_n \leq\zeta_n$ and 
$\beta_{n} \leq\gamma_{n+1}$ and $\gamma_{n}\leq\zeta_{n+1}$, \ie, 
\begin{equation*}
\xymatrix{\zeta_0  &  \zeta_1  & \ldots & \zeta_{N-1}   \\ 
\gamma_0 \ar[u] \ar[ur] & \ar[u]  \ar[ur] \gamma_1   & \ar[u] \ldots & \ar[u] \ar[ulll] \gamma_{N-1}\\
   \beta_0 \ar[u] \ar[ur] & \ar[u]  \ar[ur] \beta_1   & \ar[u] \ldots & \ar[u] \ar[ulll] \beta_{N-1}   . }
	\end{equation*}

\end{example}

 \begin{remark}
	In what follows, when we have a direct category $I$ and a model category $\cM$, the category of diagrams $\cM^I$ will always have the model structure defined in Proposition \ref{modelondiagrams} without further mention. If not, we 
	will explicitly say so.
\end{remark}

It follows that for any model category $\cM$ and direct category $J$, there is a Quillen adjunction
\[\colim\colon \cM^J \rightleftarrows \cM\colon \operatorname{const}.    \]
%Here, $\colim$ is a left Quillen functor. 
(Note that when we write an adjunction, the top arrow will always denote the left adjoint.)

\begin{defn}\label{hocolimderived}
	The left derived functor of $\colim\colon \cM^J\to \cM $ is called the \emph{homotopy colimit} and is denoted by
	\[\hocolim\colon \Ho(\cM^J)\to \Ho(\cM).\]
	If $J=\ulcorner$, then the homotopy colimit is called \emph{homotopy pushout.} A particular example of homotopy pushout is the \emph{homotopy cofiber} which is the homotopy pushout of a diagram of the form
	\[\xymatrix{  X\ar[r]^{f} \ar[d] & Y \\
		\ast           &  }\]
and we write 
\begin{equation}\label{homotopycofiber}
\hocofib(f):=\hocolim(*\leftarrow X \stackrel{f}{\to} Y).
\end{equation}
In general, for notational convenience sometimes a homotopy pushout is denoted by
\[\hocolim(Z \leftarrow X\rightarrow Y):= Z\coprod^h_XY.\]
\end{defn}

\subsubsection{Homotopy Colimits in Simplicial Model Categories}
In Definition \ref{hocolimderived} we recalled the definition of the homotopy colimit as a derived functor. Here, we will present an alternative construction via simplicial techniques. After introducing some definitions we briefly explain how this method provides a good theory of homotopy colimits, see also \cite[Chapters 4,5]{Riehl} and \cite[Section 7]{Shulman}.

Let $\cM$ be a model category and consider the category of simplicial objects $s\cM=\cM^{\Dopp}$. We consider $s\cM$ as a simplicial category with tensors defined objectwise, i.e., for $K\in \sSet$ and $X\in s\cM$ we have
\[(K\otimes X)_n= K\otimes X_n.  \] 

Now, let $\cM$ be a simplicial model category. Given a simplicial object $X\in s\cM$ we can construct an object in $\cM$ via 
\emph{geometric realization}, see \cite[Definition 18.6.2]{Hir1}.
\begin{defn}[Geometric Realization]\label{realization}
	Let $X\in \cM^{\Dopp}$. \emph{The geometric realization} of $X$, denoted as $|X|$ is defined as the coequalizer
	\begin{equation*} 
		\operatorname{coeq}\left(\coprod_{\sigma\colon [n]\lra [k]\in \Delta} \Delta^k \otimes X_n   \rightrightarrows \coprod_{[n]\in \Delta}\Delta^n \otimes  X_n  \right). 
	\end{equation*}
\end{defn}
This is an example of a functor tensor product (coend). In this case, the geometric realization is the functor tensor product of $X\colon \Dopp\to \cM$ and the 
functor $\Delta^{\bullet}\colon \Delta\lra \sSet, [n]\mapsto \Delta^n.$ In other words, the realization $|X|$ is the object
\[\Delta^{\bullet}\otimes_{\Delta^{\opp}}X= \int^{n} \Delta^n \otimes X_n .\]

The following theorem is the cornerstone of our exposition of homotopy colimits using geometric realizations, see \cite[VII 3.6]{GJ}, \cite[18.4.11]{Hir1} and \cite[Corollary 14.3.10]{Riehl}. For details for the Reedy model structure on $s\cM$, see \cite[Definition 2.1]{GJ}.
\begin{thm}\label{realisquillen}
	If $\cM$ is a simplicial model category, then
	\[ \left|-\right| \colon \cM^{\Delta^{\opp}} \to \cM \]
	is a left Quillen functor with respect to the Reedy model structure. In particular, $\left|-\right|$ sends Reedy
	cofibrant simplicial objects to cofibrant objects and preserves objectwise weak equivalences
	between them.
\end{thm}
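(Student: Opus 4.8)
The plan is to exhibit $|-|$ as a genuine left adjoint and then to verify the two lifting conditions directly via the skeletal filtration of the realization; the two ``in particular'' assertions will then be formal consequences.

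First I would pin down the right adjoint. From the coend formula $|X|=\int^{n}\Delta^{n}\otimes X_{n}$ of Definition \ref{realization}, the universal property of coends combined with the tensor--cotensor adjunction of the simplicial model category $\cM$ gives, for every $Y\in\cM$,
\begin{align*}
\Hom_{\cM}\bigl(|X|,Y\bigr)
&\cong\int_{n}\Hom_{\cM}\bigl(\Delta^{n}\otimes X_{n},Y\bigr)
\cong\int_{n}\Hom_{\cM}\bigl(X_{n},Y^{\Delta^{n}}\bigr)\\
&\cong\Hom_{\cM^{\Dopp}}\bigl(X,\,Y^{\Delta^{\bullet}}\bigr),
\end{align*}
where $Y^{\Delta^{\bullet}}$ is the simplicial object $[n]\mapsto Y^{\Delta^{n}}$ (the cotensor being contravariant in its simplicial-set variable, this is indeed a functor $\Dopp\to\cM$). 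Hence $|-|$ is left adjoint to $Y\mapsto Y^{\Delta^{\bullet}}$, and it remains to show that $|-|$ sends Reedy cofibrations to cofibrations and Reedy acyclic cofibrations to acyclic cofibrations.

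The core is the skeletal filtration. For $X\in\cM^{\Dopp}$ one has $|X|=\colim_{n}\sk_{n}|X|$ with $\sk_{-1}|X|=\emptyset$, and $\sk_{n}|X|$ is the pushout of $\sk_{n-1}|X|$ along the pushout-product, in the action $\sSet\times\cM\to\cM$, of the boundary inclusion $\partial\Delta^{n}\hookrightarrow\Delta^{n}$ with the $n$-th Reedy latching map $L_{n}X\to X_{n}$; more generally, for a map $f\colon X\to Y$ there is a relative filtration of $|f|$ whose $n$-th stage is a pushout along the pushout-product of $\partial\Delta^{n}\hookrightarrow\Delta^{n}$ with the $n$-th relative latching map $X_{n}\coprod_{L_{n}X}L_{n}Y\to Y_{n}$ of $f$. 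I would recall this filtration from \cite[Section 18.4]{Hir1} rather than rebuild it. Granting it: if $f$ is a Reedy cofibration then each relative latching map is a cofibration in $\cM$, and since $\partial\Delta^{n}\hookrightarrow\Delta^{n}$ is a cofibration of simplicial sets, axiom (SM7) for $\cM$ makes every attaching map a cofibration; cofibrations being closed under pushout and transfinite composition, $|f|$ is a cofibration. If $f$ is moreover a Reedy weak equivalence, equivalently each relative latching map is an acyclic cofibration, then (SM7) upgrades every attaching map to an acyclic cofibration, and the same closure properties give that $|f|$ is an acyclic cofibration. Hence $|-|$ is left Quillen.

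For the remaining clauses: the initial object of $\cM^{\Dopp}$ is the constant diagram at $\emptyset_{\cM}$, it is Reedy cofibrant, and $|-|$ applied to it is $\emptyset_{\cM}$; applying the above to $\emptyset\to X$ (a Reedy cofibration exactly when $X$ is Reedy cofibrant) shows $|X|$ is cofibrant. Finally, a left Quillen functor preserves weak equivalences between cofibrant objects by Ken Brown's lemma, and weak equivalences in the Reedy model structure between Reedy cofibrant objects are precisely the objectwise ones, so $|-|$ preserves objectwise weak equivalences between Reedy cofibrant simplicial objects. The only genuinely technical point is setting up the relative skeletal filtration of $|f|$ and checking that its attaching maps are exactly the stated pushout-products --- a standard but slightly fiddly coend/degeneracy computation in $\Delta$ for which I would defer to \cite{Hir1,Riehl}; everything downstream is formal from (SM7) and the closure properties of (acyclic) cofibrations.
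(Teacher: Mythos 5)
Your argument is correct, and it is essentially the standard proof of this result: the paper itself gives no proof but quotes it from \cite[VII 3.6]{GJ}, \cite[18.4.11]{Hir1} and \cite[Corollary 14.3.10]{Riehl}, whose arguments proceed exactly as you do, via the adjunction $|-|\dashv(Y\mapsto Y^{\Delta^{\bullet}})$, the (relative) skeletal filtration with attaching maps the pushout-products of $\partial\Delta^{n}\hookrightarrow\Delta^{n}$ with the relative latching maps, and (SM7). The concluding clauses via Ken Brown's lemma and the fact that Reedy weak equivalences are objectwise are likewise how the cited sources deduce them, so there is nothing to add.
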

At this level of generality, this is the strongest result possible. It is not true that geometric realization preserves all objectwise weak equivalences. However, the above will suffice for our purposes. We can now start to work our way to the homotopy colimit of a diagram $X\in \cM^J$ in a simplicial model category $\cM$.

Our first definition towards this goal is the \emph{simplicial replacement functor}. That is to say, given any diagram $F\colon I\lra \cM$ we can replace it with simplicial object in $\cM$ with good 
properties. 
\begin{defn}[Simplicial replacement]\label{simprep}
	Let $I$ be a small category and consider a diagram $X\in\cM^I$. The \emph{simplicial replacement} of $X$ is the simplicial object in $\cM$, denoted $\srep X$ given in simplicial degree $[n]$ by
	\begin{equation*}
		(\srep X)_n= \displaystyle{\coprod_{(i_0\rightarrow i_1\rightarrow \ldots \rightarrow i_n)\in N(I)_n}}X_{i_0}.
	\end{equation*}
\end{defn}
The coproduct is indexed over the set of $n$-chains 
\[\sigma=[i_0\rightarrow i_1\lra \ldots \to i_n] \] 
over the nerve of $I$. 
If $0\leq k<n$, then 
\[d_k\colon (\srep X)_{n}\lra (\srep X)_{n-1}\] maps the term $X_{i_n}$ indexed on $\sigma$ to the term $X_{i_n}$ indexed on 
\[\sigma(k)=[i_0\to i_1\to i_{k-1}\to i_{k+1}\to \ldots \to i_n]\]
via the identity, while for $k=n$, the map $d_n$ sends the term $X_{i_n}$ to $X_{i_{n-1}}$ indexed on 
\[\sigma(n)=[i_0 \rightarrow i_1\rightarrow \ldots \rightarrow i_{n-1}]\] via the induced map 
$X(i_n\rightarrow i_{n-1})$. The degeneracy maps 
\[
s_j\colon (\srep X)_n\lra (\srep X)_{n+1}, 0\leq j\leq n
\]
are easier to define. Each $s_j$ sends the summand $X_{i_n}$ corresponding to the summand
\[ [i_0 \rightarrow i_1\rightarrow \ldots \rightarrow i_n] \]to the identical summand $X_{i_n}$ corresponding to the chain in which one has inserted the identity map $i_j\rightarrow i_j$.    

In other words, the simplicial replacement is the following simplicial object 
\begin{equation*}
	\xymatrix{ \displaystyle{\coprod_{i_0}} X_{i_0}  &  \ar@<.5ex>[l] \ar@<-.5ex>[l]  \displaystyle{\coprod_{i_0\rightarrow i_1}} X_{i_0}  & \ar@<1ex>[l] \ar[l] \ar@<-1ex>[l]   \displaystyle{\coprod_{i_0
				\rightarrow i_1 \rightarrow i_2}} X_{i_0} \cdots  }
\end{equation*}
where degeneracy maps are omitted. Note that this is can also be foudn in literature as the \emph{simplicial bar construction} or \emph{Bousfield-Kan construction} denoted by $B(\ast,I,X)$.

\begin{remark}
	The colimit of a diagram $X \in \cM^{I}$, if it exists, agrees with the colimit of $\srep(X)\in s\cM.$ Indeed, consider the colimit of the diagram $\srep(X)$ as the coequilizer
	\[\coprod_{i}X_i \leftleftarrows \coprod_{j\leftarrow i}X_i,   \]
	but this is precisely the colimit of $X$. Therefore in this case, $\srep(X)$ has the augmentation
	\[\srep(F)\lra \colim_{I}F   ,\]
	where we regard the object $\colim_{I} F$ as a constant simplicial object.
\end{remark}
We therefore reach the following result.
\begin{lemma}\label{lem:realization}
	Given a diagram $X\in \cM^I$ and its simplicial replacement $\srep(X)\in \cM^{\Dopp}$, there is a canonical isomorphism
	\[\colim_{I} X \cong \colim_{\Delta^{\opp}}\left(\srep(X)\right).   \]
\end{lemma}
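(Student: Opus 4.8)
The plan is to unwind both sides of the claimed isomorphism directly from the universal property of colimits. The key observation is that for any small category $I$ and diagram $X\in\cM^I$, the colimit of $X$ (when it exists) is the coequalizer
\[
\operatorname{coeq}\!\left(\coprod_{i_0\to i_1}X_{i_0}\;\rightrightarrows\;\coprod_{i_0}X_{i_0}\right),
\]
where the two maps are induced, respectively, by the structure maps $X(i_0\to i_1)$ landing in the $X_{i_1}$-summand and by the identity landing in the $X_{i_0}$-summand. This is exactly the classical presentation of a colimit over $I$ by generators and relations, and it only involves the $0$- and $1$-simplices of the nerve $N(I)$, i.e.\ the objects and (composable $1$-chains of) morphisms.

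First I would observe that the two faces $d_0,d_1\colon (\srep X)_1\to (\srep X)_0$ are precisely the two parallel maps in that coequalizer presentation, by comparing Definition \ref{simprep} with the formulas above. Second, I would invoke the standard fact that the colimit of a simplicial object $Y\in\cM^{\Dopp}$, i.e.\ $\colim_{\Delta^{\opp}}Y$, is computed as the coequalizer of $d_0,d_1\colon Y_1\rightrightarrows Y_0$; this is because $\Delta^{\opp}$ has the full subcategory on $[0]$ and $[1]$ as a (co)final piece for colimit purposes, the higher simplices and degeneracies contributing nothing new to the colimit. Applying this with $Y=\srep(X)$ gives
\[
\colim_{\Delta^{\opp}}\bigl(\srep(X)\bigr)\cong\operatorname{coeq}\!\left((\srep X)_1\rightrightarrows(\srep X)_0\right)=\operatorname{coeq}\!\left(\coprod_{i_0\to i_1}X_{i_0}\rightrightarrows\coprod_{i_0}X_{i_0}\right)\cong\colim_I X,
\]
and the composite isomorphism is natural in $X$ because every map in sight is induced by a natural transformation of diagrams.

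Alternatively, and perhaps more cleanly, I would phrase this via adjunctions: the nerve-type functor $I\mapsto N(I)$ together with the colimit-as-coend formula shows $\srep(X)=B(\ast,I,X)$ and that $\colim_{\Delta^{\opp}}$ applied to a bar construction recovers the ordinary colimit; the preceding remark in the excerpt already exhibits the augmentation $\srep(X)\to\colim_I X$, so it suffices to check this augmentation is the coequalizer, which is the computation above. Either way there is essentially no obstacle here: the only mild subtlety is making sure the colimit over $\Delta^{\opp}$ genuinely reduces to the coequalizer over the two bottom levels, which is a standard fact (the inclusion of the truncation $\Delta_{\leq 1}^{\opp}\hookrightarrow\Delta^{\opp}$ is final for the purpose of colimits, equivalently $\colim$ over $\Delta^{\opp}$ is a reflexive coequalizer of the bottom two terms). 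I would simply cite this and assemble the naturality statement.
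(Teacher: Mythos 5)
Your argument is correct and coincides with the paper's: the preceding remark in the text sketches exactly this identification of $\colim_I X$ with the coequalizer $\coprod_{i_0\to i_1}X_{i_0}\rightrightarrows\coprod_{i_0}X_{i_0}$, and the formal proof is delegated to \cite[Lemma 4.4.2]{Riehl}, which proceeds the same way. Your added justification that $\colim_{\Delta^{\opp}}$ reduces to the reflexive coequalizer of the bottom two levels is precisely the standard fact being invoked there, so nothing is missing.
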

The proof can be found in \cite[Lemma 4.4.2]{Riehl}. The following lemma will also be of importance, see \cite[Lemma 5.1.2]{Riehl}, \cite[Lemma 8.7]{Shulman}.
\begin{lemma}\label{objectwisecofibrantreedy}
	Let $I$ be a small category and let $\cM$ be a simplicial model category. If $F\in \cM^I$ is objectwise cofibrant, then $\srep(F) \in s\cM$ is Reedy cofibrant.
\end{lemma}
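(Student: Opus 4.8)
The plan is to verify the defining condition for a Reedy cofibrant object of $\cM^{\Dopp}$ directly: for every $n\ge 0$ the $n$-th latching map $L_n\srep(F)\to\srep(F)_n$ has to be a cofibration in $\cM$. Recall that for a simplicial object the latching object $L_n$ is assembled from the degeneracy operators, and that for a simplicial set $L_n$ recovers exactly the set of degenerate $n$-simplices. So the real content is to understand how $\srep(F)_n=\coprod_{\rho\in N(I)_n}F_{i_0(\rho)}$ decomposes with respect to degeneracies.

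First I would record the relevant coproduct decomposition. By the Eilenberg--Zilber lemma for the nerve $N(I)$, every $n$-simplex $\rho\in N(I)_n$ factors uniquely as $\rho=\eta^{*}\tau$ with $\eta\colon[n]\twoheadrightarrow[k]$ a surjection in $\Delta$ and $\tau$ a non-degenerate $k$-simplex of $N(I)$. Since an order-preserving surjection $[n]\twoheadrightarrow[k]$ sends $0$ to $0$, we get $i_0(\eta^{*}\tau)=i_0(\tau)$, so the $\rho$-indexed summand of $\srep(F)_n$ equals $F_{i_0(\tau)}$ and depends only on $\tau$. This yields a natural decomposition
\[
\srep(F)_n\ \cong\ \coprod_{k=0}^{n}\ \coprod_{\tau\in N(I)^{\mathrm{nd}}_{k}}\ \coprod_{\eta\colon[n]\twoheadrightarrow[k]}F_{i_0(\tau)},
\]
where $N(I)^{\mathrm{nd}}_{k}$ denotes the non-degenerate $k$-simplices. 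Then I would identify $L_n\srep(F)$ with the sub-coproduct over the pairs $(\tau,\eta)$ with $\eta$ a non-identity, equivalently $k<n$, and the latching map with the inclusion of that sub-coproduct; the remaining summands are those with $k=n$, for which $\eta$ must be the identity, namely $\coprod_{\tau\in N(I)^{\mathrm{nd}}_{n}}F_{i_0(\tau)}$. Hence the latching map becomes, up to isomorphism, the canonical summand inclusion
\[
L_n\srep(F)\ \longrightarrow\ L_n\srep(F)\ \amalg\ \coprod_{\tau\in N(I)^{\mathrm{nd}}_{n}}F_{i_0(\tau)}.
\]

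To conclude, since $F$ is objectwise cofibrant every $F_{i_0(\tau)}$ is cofibrant, hence so is $\coprod_{\tau}F_{i_0(\tau)}$, and the inclusion $A\to A\amalg B$ of a summand with $B$ cofibrant is a cofibration --- it is the pushout of $\emptyset\to B$ along $\emptyset\to A$. Therefore $L_n\srep(F)\to\srep(F)_n$ is a cofibration for all $n$ and $\srep(F)$ is Reedy cofibrant. The step I expect to be the main obstacle is the identification of $L_n\srep(F)$ with the degenerate sub-coproduct and of the latching map with a summand inclusion: this is the relative analogue of the fact that the latching object of a simplicial set is its degenerate part, and it goes through because $\srep(F)$ is produced from $N(I)$ by substituting the object $F_{i_0(\rho)}$ for each index $\rho$, an assignment that is constant along degeneracies and commutes with the colimits computing latching objects. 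Everything else is a formal consequence of the closure properties of cofibrations.
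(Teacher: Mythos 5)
Your proof is correct, and it is essentially the argument behind the references the paper cites for this lemma (the paper gives no proof of its own, pointing instead to Riehl's and Shulman's treatments of split simplicial objects and the bar construction). Your identification of $\srep(F)$ as a split simplicial object via the Eilenberg--Zilber decomposition of $N(I)$, with the latching map realized as the inclusion of the degenerate summand whose complement $\coprod_{\tau\in N(I)^{\mathrm{nd}}_n}F_{i_0(\tau)}$ is cofibrant, is exactly the standard route.
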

The above Lemma \ref{lem:realization} and Theorem \ref{realisquillen} essentially mean that geomemetric realization of objectwise cofibrant diagrams is a good model for calculating homotopy colimits. For details see \cite[Theorem 6.6.1]{Riehl}.

\subsection{Homotopy Kan Extensions}
In this subsection we will introduce \emph{homotopy Kan extensions}, the homotopy invariant version of ordinary Kan extensions, see e.g. \cite[Section 11.9]{Hir1}. 
%Nowdays the theory of homotopy Kan extensions is subsumed in the theory of \emph{derivators}. For our purposes the discussion in . For more detailed exposition of homotopy Kan extensions see \cite{Groth}.

Now, let $\cM$ be a model category. Furthermore, let $I, J$ be direct categories and $f\colon I\lra J$ a functor. The pullback functor 
\[f^*\colon \cM^{J}\lra \cM^{I}  \]
preserves weak equivalences, so it defines a functor between homotopy categories, which we denote by the same letter. Recall the functor $\Lan_f=f_!$, left adjoint to $f^*$.
We have the following proposition.
\begin{prop}\label{leftkanrightkanquillenpair}
	Let $\cM$ be a model category and let $f\colon I\lra J$ be a map of direct categories. Then the adjunction
	\[ f_!\colon \cM^{I}\rightleftarrows \cM^{J}\colon f^*   \]
	is a Quillen adjunction. 
\end{prop}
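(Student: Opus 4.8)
The plan is to verify that the right adjoint $f^*$ is a right Quillen functor; the desired Quillen adjunction then follows immediately from the definition, and in particular one never needs to analyse the behaviour of $f_!$ on cofibrations directly.

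First I would record that the adjunction is genuinely defined at the level of diagram categories. Since $\cM$ is bicomplete, so are $\cM^I$ and $\cM^J$, and therefore the left Kan extension $f_! = \Lan_f\colon \cM^I \lra \cM^J$ exists and is left adjoint to the restriction functor $f^*$. Both diagram categories are equipped with the model structure of Proposition~\ref{modelondiagrams}, in which a morphism $\eta$ is a weak equivalence, respectively a fibration, exactly when each component $\eta_z$ has the corresponding property in $\cM$.

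Next I would observe that $f^*$ manifestly preserves these two classes of maps. For $\eta\colon X \lra Y$ in $\cM^J$ one has $(f^*\eta)_i = \eta_{f(i)}$ for every object $i$ of $I$, so if $\eta$ is an objectwise weak equivalence (resp. an objectwise fibration) then $f^*\eta$ is too. Hence $f^*$ carries fibrations to fibrations and acyclic fibrations to acyclic fibrations, i.e. it is a right Quillen functor, and $(f_!, f^*)$ is a Quillen adjunction as claimed. (This is dual to the case of inverse categories, where the same argument shows that $f^*$ is instead left Quillen, with right adjoint the right Kan extension $\Ran_f$.)

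The only point requiring a little care — and thus the closest thing to an obstacle here — is the choice to route the argument through the right adjoint rather than attempting to show that $f_!$ preserves (acyclic) cofibrations directly: in the model structure of Proposition~\ref{modelondiagrams} the cofibrations are the subtle class, characterised via the latching maps $L_z X \lra X_z$, and $f_!$ interacts with these latching objects in a way that is not transparent in general. Using the objectwise description of fibrations and weak equivalences sidesteps this entirely, which is why the proof is short.
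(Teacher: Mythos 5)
Your proof is correct and follows exactly the paper's argument: both verify that $f^*$ is a right Quillen functor by noting it preserves the objectwise-defined weak equivalences and fibrations of the projective model structure. The extra remarks on existence of $f_!$ and on avoiding the latching-object description of cofibrations are sound but not needed.
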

\begin{proof}
	This follows from the definition of the projective model structure, see Proposition \ref{modelondiagrams}. The functor $f^*$ is a right adjoint by construction. It preserves weak equivalences and projective fibrations, which means that $f^*$ is also a right Quillen functor.
\end{proof}
Thus, the derived functors of the adjoint pair $(f_!,f^*)$ define an adjoint pair on the level of homotopy categories
\[  \bbL\mathrm{Lan}_f:= \bbL f_! \colon \Ho(\cM^I) \rightleftarrows  \Ho(\cM^J)\colon \mathbb{R}f^*.    \]
A useful fact about homotopy Kan extensions is that they does not change the homotopy colimit of a diagram, which is similar to the properties of ordinary Kan extensions.
\begin{cor}\label{landoesnotchangecolimit}
	Let $\cM$ be a model category, $f\colon I\lra J$ a map of direct categories and let $X\in \cM^{I}$. Then there is a canonical isomorphism in $\Ho(\cM)$ 
	\[\hocolim_{J} \bbL f_!X \cong \hocolim_{I}X.\]
\end{cor}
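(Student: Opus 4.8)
The plan is to reduce the statement to the corresponding fact for ordinary (underived) Kan extensions. The key point is that homotopy colimits are computed as derived functors of $\colim$, and $\colim_J \circ \Lan_f = \colim_I$ holds on the nose as an identity of functors: indeed, $\colim_J \circ f_! = \colim_J \circ \Lan_f$ is the left adjoint to the constant-diagram functor $\cM \to \cM^I$, which is $f^* \circ \mathrm{const}_J = \mathrm{const}_I$, and left adjoints are unique up to canonical isomorphism. So on the underived level there is nothing to do; the content is to show that passing to derived functors is harmless here.

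First I would recall from Proposition \ref{leftkanrightkanquillenpair} that $(f_!, f^*)$ is a Quillen adjunction and from the Quillen adjunction $(\colim, \mathrm{const})$ on $\cM^I$ and $\cM^J$ that both $\colim_I$ and $\colim_J$ are left Quillen. Then I would invoke the composition-of-left-Quillen-functors principle: the composite of left Quillen functors is left Quillen, and its left derived functor is canonically isomorphic to the composite of the left derived functors (see e.g. \cite[Theorem 1.3.7]{Hovey}). Applying this to $\colim_J \circ f_! \colon \cM^I \to \cM$, we get
\[
\hocolim_J \circ \bbL f_! \;=\; \bbL(\colim_J) \circ \bbL f_! \;\cong\; \bbL(\colim_J \circ f_!) \;=\; \bbL(\colim_I) \;=\; \hocolim_I,
\]
where the middle isomorphism is the canonical one and the next-to-last equality uses the underived identity $\colim_J \circ f_! = \colim_I$ noted above. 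Evaluating at $X \in \Ho(\cM^I)$ gives the desired natural isomorphism in $\Ho(\cM)$.

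The only mild subtlety — and the step I would be most careful about — is making sure the hypotheses for the composition principle are genuinely met, i.e. that $f_!$ really is left Quillen (not merely that $f^*$ preserves weak equivalences, which we also have) so that it preserves cofibrant objects and hence $\colim_J$ can be left-derived after it without an intermediate cofibrant replacement. This is exactly Proposition \ref{leftkanrightkanquillenpair}, so no new work is needed. Alternatively, if one prefers to avoid citing the composition principle, one can argue directly: for cofibrant $X \in \cM^I$, the diagram $f_! X$ is cofibrant in $\cM^J$ (as $f_!$ is left Quillen), so $\hocolim_J(\bbL f_! X) = \colim_J(f_! X) = \colim_I(X) = \hocolim_I(X)$, and naturality in the homotopy category follows since all functors involved are induced from functors on the cofibrant subcategories that preserve weak equivalences. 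Either way the proof is short and essentially formal; there is no genuine obstacle, only bookkeeping about which functors are (left) Quillen.
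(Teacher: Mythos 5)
Your argument is correct and is essentially identical to the paper's own proof: both rest on the composition principle for left derived functors of left Quillen functors (\cite[Theorem 1.3.7]{Hovey}) combined with the strict identity $\colim_J\circ\Lan_f\cong\colim_I$. The extra care you take in justifying that $f_!$ is genuinely left Quillen (via Proposition \ref{leftkanrightkanquillenpair}) is exactly the hypothesis the paper relies on implicitly, so nothing is missing.
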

\begin{proof}
	This follows from the fact that for every pair of left Quillen functors $F$ and $G$ there is a natural isomorphism
	\[\bbL F \circ \bbL G \lra \bbL (F\circ G),\] 
	see \cite[Theorem 1.37]{Hovey}, together with the natural isomorphism
	\[\colim_{J}\Lan_fX\cong \colim_{I}X.\]
\end{proof}

To conclude this section, we will shortly discuss how one calculates the values and edges of a homotopy Kan extension. %It is a fact that left homotopy Kan extensions can be calculated pointwise, like 
%in ordinary category theory, see \cite[Proposition 1.4]{CI09}. 
Recall the notion of a \emph{slice category} for given posets $\cC$ and $\cD$ and a functor $f\colon \cC\lra \cD$, namely
\begin{equation}\label{sliceposets}
f/d= \left\{c\in \cC \colon f(c) \leq d\right\}
\end{equation}   
for $d\in \cD$. 
The following is \cite[Proposition 1.14]{CI09}, which tells us that homotopy Kan extensions can be computed pointwise.
\begin{prop}\label{holanformula}
	Let $f\colon I\to J$ be a map of posets  and let $X$ be any functor $I\lra \cM$. For any object $j\in J$ there is a canonical isomorphism in $\Ho(\cM)$
	\[ (\bbL f_!F)_j \cong \hocolim\left( f/j \xrightarrow{\pi}  I \xrightarrow{X} \cM \right) .\]
\end{prop}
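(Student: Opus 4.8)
The plan is to reduce the claim to the analogous pointwise formula for ordinary left Kan extensions together with the compatibility of $\bbL f_!$ with homotopy colimits established in Corollary~\ref{landoesnotchangecolimit}, but the cleanest route is via the explicit presentation of $f_!$ as a left Kan extension computed by a colimit over slice categories. First I would recall that, since $f\colon I\to J$ is a map of posets (in particular a map of direct categories), Proposition~\ref{leftkanrightkanquillenpair} gives a Quillen adjunction $(f_!,f^*)$, so $\bbL f_!$ is computed by applying $f_!$ to a projectively cofibrant replacement $\widetilde{X}\xrightarrow{\sim}X$ in $\cM^I$. The ordinary pointwise formula for left Kan extensions gives a natural isomorphism
\[ (f_!\widetilde{X})_j \cong \colim\left( f/j \xrightarrow{\pi} I \xrightarrow{\widetilde{X}} \cM \right), \]
where $\pi$ is the evident projection and $f/j$ is the slice category of \eqref{sliceposets}; here I use that $\cM$ has all small colimits and that for posets the comma category $f/j$ is again a poset, hence a direct category.

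Next I would argue that the right-hand side computes the homotopy colimit. The key point is that $\widetilde{X}$ is projectively cofibrant in $\cM^I$, hence in particular objectwise cofibrant, so its restriction $\pi^*\widetilde{X}$ along the inclusion $\pi\colon f/j\to I$ is objectwise cofibrant in $\cM^{f/j}$. Moreover $\pi$ is a left fibration of posets in the sense relevant here: for each $i$ in the image the slice $f/j$ collects exactly the objects below $j$, and one checks that $\pi^*\widetilde{X}$ is in fact projectively cofibrant in $\cM^{f/j}$ because the latching objects of $\pi^*\widetilde{X}$ at an object $c\in f/j$ are computed by the same colimit over the non-identity morphisms into $c$ in $f/j$, which agrees with the corresponding latching colimit for $\widetilde{X}$ over $I$ when $c$ has the same degree; since $\widetilde{X}$ was projectively cofibrant, the induced latching maps are cofibrations. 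Therefore
\[ \colim_{f/j}\left(\pi^*\widetilde{X}\right) \cong \hocolim_{f/j}\left(\pi^*\widetilde{X}\right) \cong \hocolim\left( f/j \xrightarrow{\pi} I \xrightarrow{X} \cM \right), \]
the last isomorphism because $\widetilde{X}\xrightarrow{\sim}X$ is an objectwise weak equivalence and homotopy colimits are homotopy invariant.

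Combining these two displays gives, for every $j\in J$,
\[ (\bbL f_!X)_j \cong (f_!\widetilde{X})_j \cong \colim_{f/j}\left(\pi^*\widetilde{X}\right) \cong \hocolim\left( f/j \xrightarrow{\pi} I \xrightarrow{X} \cM \right), \]
and one checks that this identification is natural in $j$: a morphism $j\to j'$ in $J$ induces an inclusion of slice posets $f/j\hookrightarrow f/j'$, and both the left-hand structure map $(\bbL f_!X)_j\to(\bbL f_!X)_{j'}$ and the right-hand map of homotopy colimits are induced by this inclusion, so they agree under the isomorphism. This also shows the isomorphism is compatible with the edge maps, as the statement records.

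\textbf{Main obstacle.} I expect the crux of the argument to be the verification that $\pi^*\widetilde{X}$ is projectively cofibrant (or at least objectwise cofibrant with the right latching behaviour) in $\cM^{f/j}$, i.e.\ that restriction along the slice projection $\pi\colon f/j\to I$ preserves projective cofibrancy. This is the step where the poset hypothesis genuinely does work: one must identify the latching object of $\pi^*\widetilde{X}$ at $c\in f/j$ with (a retract of, or exactly) the latching object of $\widetilde{X}$ at $\pi(c)$, using that the non-identity morphisms into $c$ in the poset $f/j$ are precisely the non-identity morphisms into $\pi(c)$ in $I$ that happen to land below $j$. Once this bookkeeping is in place, everything else is a formal assembly of Proposition~\ref{leftkanrightkanquillenpair}, the classical pointwise Kan extension formula, and homotopy invariance of $\hocolim$ (cf.\ the discussion following Lemma~\ref{objectwisecofibrantreedy}). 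An alternative, if one prefers to avoid the cofibrancy bookkeeping, is to invoke the known fact that left homotopy Kan extensions along functors between (the nerves of) small categories are computed pointwise by homotopy colimits over homotopy slice categories \cite[Proposition 1.4]{CI09}, and then observe that for a map of posets the strict slice $f/j$ of \eqref{sliceposets} is homotopy equivalent to the homotopy slice; I would mention this as a shortcut but carry out the model-categorical argument above for self-containedness.
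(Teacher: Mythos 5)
Your argument is correct, but it is worth noting that the paper does not actually prove this proposition: it simply cites \cite[Proposition 1.4]{CI09} for the fact that homotopy left Kan extensions are computed pointwise by homotopy colimits over slices. What you supply is a self-contained model-categorical proof, and the one step you correctly flag as the crux — that restriction along $\pi\colon f/j\to I$ preserves projective cofibrancy — goes through for a cleaner reason than your phrasing suggests: since $f$ is a map of posets, $f/j=\{c\in I: f(c)\le j\}$ is a \emph{downward-closed} full subposet (a sieve) of $I$, because $c'\le c$ and $f(c)\le j$ force $f(c')\le j$. Hence every non-identity morphism into $c$ in $I$ automatically has source in $f/j$, so the latching category of $c$ in $f/j$ is literally equal to its latching category in $I$ (not merely the part "landing below $j$"), the latching objects and latching maps of $\pi^*\widetilde{X}$ and $\widetilde{X}$ coincide, and projective cofibrancy is inherited on the nose. (One should also remark that $f/j$ inherits a degree function from $I$, so it is again a direct category and Proposition \ref{modelondiagrams} applies.) With that observation your chain of isomorphisms — pointwise formula for the strict $f_!$ on a cofibrant replacement, colimit equals homotopy colimit on a projectively cofibrant diagram, homotopy invariance under the objectwise equivalence $\widetilde{X}\to X$ — is a complete proof, and arguably a more useful one for the reader than the bare citation, since it makes explicit why the \emph{strict} slice poset of \eqref{sliceposets} suffices rather than a homotopy comma category.
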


\subsection{Monoidal Model Categories}

Let us now turn to some results concerning monoidal model categories, 
see e.g. \cite[Definition 4.2.6]{Hovey}, \cite[Definition 6.1.9]{BR} or \cite[Definition 11.4.6]{Riehl} for definitions. %A monoidal model category induces a monoidal structure on the homotopy category $\Ho(\cM)$ with monoidal product $$-\wedge^{\bbL}-:=Q(-)\wedge Q(-).$$

\begin{remark}
	Let $(\mathcal{C},\wedge)$ be a closed symmetric monoidal category and let $f\colon X_0\lra X_1$ and $g\colon Y_0\lra Y_1$ be maps in $\mathcal{C}$. The pushout-product map is the universal arrow 
	\[f\boxprod g\colon X_0\wedge Y_1\coprod_{X_0\wedge Y_0} X_1\otimes Y_0\lra X_1\wedge Y_1.  \]
	
	Another way to see the pushout-product map is as a left Kan extension. Again, consider a cocomplete, (closed) monoidal category $(\mathcal{C},\wedge)$. Let $[1]=\left\{0\leq 1\right\}$. Furthermore, consider  the following map of posets.
	\begin{align*}
		\pr\colon [1] \times [1] &\lra [1] \\
		(0,0), (1,0), (0,1)&\mapsto 0 \\
		(1,1)        &\mapsto 1   
	\end{align*}
	Now let $f$ and $g$ be morphisms in $\mathcal{C}$. We can consider them as as objects in the arrow category $f,g\in \mathcal{C}^{[1]}$. The functors $f\colon[1]\lra \mathcal{C}$ and $g\colon [1]\lra \mathcal{C}$ give rise to their objectwise tensor product $f\wedge g$, see Definition \ref{objectwisesmash}. That is, the functor
	\[f\wedge g\colon [1] \times [1] \lra \mathcal{C} \]
	is the following commutative diagram.
	\[\xymatrix{   X_0\wedge Y_0  \ar[r] \ar[d]&  X_1\wedge Y_0  \ar[d]    \\
		X_0\wedge Y_1 \ar[r]  & X_1\wedge Y_0  .}\]
	Note that the slice category $\pr/0$ is the poset $\ulcorner$ and the slice $\pr/1$ is the whole square. It follows that the map
	\[\colim_{\ulcorner}\left(f\wedge g\right)\to \colim_{[1] \times [1]} \left(f\wedge g\right)    \]
	induced by the inclusion $\ulcorner \hookrightarrow [1] \times [1]$ is exactly the map 
	\[f\boxprod g\colon X_0\wedge Y_1 \coprod_{X_0\wedge Y_0} X_1\wedge Y_1\lra X_1\wedge Y_1.\]
	So indeed, $(\Lan_{\pr}(f\wedge g))= \pr_!(f\wedge g)= f\boxprod g$.
\end{remark}

\subsubsection{Smash Products for Diagram Categories}
A monoidal category $(\cM,\wedge)$ gives rise to more monoidal categories by considering diagrams from small categories into $\cM$. In our next example we discuss how this is related to model category theory.

\begin{defn}\label{objectwisesmash} 
Let $(\cM,\wedge)$ be a monoidal category and let $I$ and $J$ be direct categories. We define the external product, which is the bifunctor
\[-\wedge-\colon \cM^{I} \times \cM^{J}\lra \cM^{I\times J}\]
sending $(X,Y)$ to the diagram 
\[X\wedge Y\colon I\x J\lra \cM, \ \ (i,j)\mapsto X_i\wedge Y_j.\]
\end{defn}

The external product is part of a two-variable adjunction. Since we do not use the extra structure will not define the other two functors in the two-variable adjunction. We have the following proposition.

\begin{prop}\label{totalderivedobjectwise}
Let $(\cM,\wedge)$ be a monoidal model category. Then, the bifunctor
\[ -\wedge-\colon\cM^{I}\times \cM^{J}\lra \cM^{I\times J}  \]
is a Quillen bifunctor, that is to say, it has a total left derived functor
\[ -\wedge^{\bbL}-\colon \Ho(\cM^{I})\times \Ho(\cM^{J})\lra \Ho(\cM^{I\times J}).  \]
\end{prop}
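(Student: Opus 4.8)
The plan is to verify the two conditions that characterise a Quillen bifunctor: that $-\wedge-$ preserves cofibrations in the pushout-product sense, and that it sends a pushout-product of an acyclic cofibration against a cofibration (in either variable) to an acyclic cofibration. Since the projective model structures on $\cM^I$, $\cM^J$ and $\cM^{I\times J}$ are all described by the latching-object criterion of Proposition \ref{modelondiagrams}, and since the degree function on $I\times J$ is the sum of the degrees on the two factors, everything should reduce to a pointwise computation indexed by objects $(i,j)\in I\times J$, combined with the fact that $(\cM,\wedge)$ is itself a monoidal model category (so the pushout-product axiom holds in $\cM$).

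First I would set up the external product in terms of left Kan extension: writing $p_I\colon I\times J\to I$ and $p_J\colon I\times J\to J$ for the projections, one has $X\wedge Y = p_I^*X \wedge p_J^*Y$ where the $\wedge$ on the right is the objectwise smash on $\cM^{I\times J}$ of Example \ref{diagramsismonoidal}. The functors $p_I^*$, $p_J^*$ preserve cofibrations and acyclic cofibrations (they are left Quillen, being also right adjoints by Proposition \ref{leftkanrightkanquillenpair} and preserving weak equivalences and fibrations), and by \cite[Proposition 4.15]{Barwick} the objectwise smash on $\cM^{I\times J}$ is a Quillen bifunctor. So the cleanest route is: (i) observe $-\wedge-$ factors as $(p_I^*\times p_J^*)$ followed by the objectwise smash on $\cM^{I\times J}$; (ii) deduce the pushout-product and acyclicity properties from those of the objectwise smash. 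The only subtlety here is that a pushout-product in $\cM^I\times\cM^J$ does not literally map to a pushout-product in $\cM^{I\times J}\times\cM^{I\times J}$ under $p_I^*\times p_J^*$ — one must check the pushout-product construction is compatible with these exact, colimit-preserving restriction functors, which it is, since $p_I^*$ and $p_J^*$ preserve pushouts.

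Alternatively, and perhaps more self-contained, one argues directly from latching objects: for a cofibration $f\colon X_0\to X_1$ in $\cM^I$ and $g\colon Y_0\to Y_1$ in $\cM^J$, one computes the relative latching map of $f\boxprod g$ at an object $(i,j)$ and identifies it, using a Leibniz-type formula for the latching object of an external smash (the latching object $L_{(i,j)}(X\wedge Y)$ decomposes via the latching objects $L_iX$, $L_jY$ and the values $X_i$, $Y_j$), as a pushout-product in $\cM$ of the relative latching maps of $f$ at $i$ and of $g$ at $j$. The monoidal model structure on $\cM$ then gives that this is a cofibration, and an acyclic cofibration when one of the inputs is. Then $-\wedge^{\bbL}-$ is obtained as the derived functor: restrict $-\wedge-$ to cofibrant objects, where it preserves objectwise weak equivalences (by Ken Brown's lemma, once the pushout-product property is known, or directly since smashing a weak equivalence between cofibrant objects with a cofibrant object is a weak equivalence), and pass to homotopy categories using Convention \ref{homotopycategoryofM}.

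The main obstacle I anticipate is establishing the Leibniz/decomposition formula for the latching object of an external smash product — i.e. showing $L_{(i,j)}(X\wedge Y)$ is the appropriate pushout of $L_iX\wedge Y_j$, $X_i\wedge L_jY$ over $L_iX\wedge L_jY$ — since the slice category $(I\times J)_{(i,j)}$ of non-identity morphisms into $(i,j)$ is not simply the product of $I_i$ and $J_j$ but contains also morphisms that are identities in one coordinate. Handling this honestly requires decomposing that slice category and a colimit manipulation; this is the one genuinely technical point, and the factorisation-through-$p_I^*,p_J^*$ approach of the second paragraph is attractive precisely because it offloads this bookkeeping onto the already-cited \cite[Proposition 4.15]{Barwick}.
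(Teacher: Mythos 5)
The paper does not actually prove this proposition: it states it and refers to \cite[Proposition 4.15]{Barwick} for the general Reedy-category statement, offering the injective model structure as an alternative. Your second route --- computing the relative latching map of $f\boxprod g$ at $(i,j)$ and identifying it, via the decomposition
\[
L_{(i,j)}(X\wedge Y)\;\cong\; L_iX\wedge Y_j\coprod_{L_iX\wedge L_jY}X_i\wedge L_jY,
\]
with the pushout-product in $\cM$ of the relative latching maps of $f$ at $i$ and of $g$ at $j$ --- is precisely the content of the cited result, and it is correct. The decomposition is obtained by covering the latching category $(I\times J)_{(i,j)}$ by the two subcategories in which the $I$-component, respectively the $J$-component, of a morphism into $(i,j)$ is allowed to be an identity, with intersection $I_i\times J_j$; this is the standard argument (see \cite{Hovey}, \cite{Barwick}), and you have correctly isolated it as the one genuinely technical point. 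Once it is in place, the pushout-product axiom in $\cM$ finishes the job, and the total left derived functor exists by Ken Brown's lemma as you say.

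Your first, ``cleaner'' route, however, does not work, and the failure is exactly at the claim you pass over: $p_I^*$ and $p_J^*$ are \emph{not} left Quillen for the projective model structures. Proposition \ref{leftkanrightkanquillenpair} makes them right Quillen, which is beside the point. Concretely, take $I=J=\cC_N$ and $X\in\cM^{\cC_N}$ projectively cofibrant. The latching category of $\cC_N\times\cC_N$ at $(\beta_0,\zeta_0)$ is the discrete category on $(\beta_0,\beta_0)$ and $(\beta_0,\beta_{N-1})$, on which $p_I^*X$ is constant with value $X_{\beta_0}$; the latching map of $p_I^*X$ there is the fold map $X_{\beta_0}\vee X_{\beta_0}\lra X_{\beta_0}$, which is not a cofibration in general. (This is the same phenomenon as the constant-diagram functor $\cM\to\cM^{\cC_N}$ being right, not left, Quillen.) Since $p_I^*f$ and $p_J^*g$ need not be projective cofibrations, the pushout-product axiom for the objectwise smash on $\cM^{\cC_N\times\cC_N}$ cannot be applied to them, and the factorisation strategy collapses; the latching-object bookkeeping of your second route cannot be offloaded this way. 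If you do want to avoid it, the paper's own alternative is the viable one: work with injective (objectwise) cofibrations, where the pushout-product axiom is checked objectwise in $\cM$, and note that the resulting total derived functor agrees with the projective one by its universal property.
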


\begin{proof}
Suppose that the \emph{injective} model structures $\cM_{\mathrm{inj}}^{I}, \cM_{\mathrm{inj}}^{J}$ and  $\cM_{\mathrm{inj}}^{I\x J}$ exist, \eg, if 
$\cM$ is a combinatorial model category. Since in the injective model structures the cofibrations are the objectwise cofibrations, the above proposition follows directly. The universal property of $-\wedge^{\bbL}-$ implies that up to canonical isomorphism both constructions give the same result.
\end{proof}

%We note that a more general version of the statement of Proposition \ref{totalderivedobjectwise} can be found in \cite[Proposition 4.15]{Barwick} as $I$ and $J$ are Reedy categories..

We have the following  corollary.
\begin{cor}\label{hocolimIJ}
In the context of Proposition \ref{totalderivedobjectwise}, there is a functor isomorphism
\[\hocolim_{I\times J} (X\wedge^{\bbL}Y)\cong (\hocolim_{I}X)\wedge^{\bbL} (\hocolim_{J}Y).   \]
\end{cor}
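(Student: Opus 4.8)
The plan is to reduce the statement to a point-set identity between ordinary colimits and then derive it.

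\emph{Step 1 (point-set level).} For $X\in\cM^I$ and $Y\in\cM^J$ the external product $X\wedge Y\colon I\times J\to\cM$ sends $(i,j)$ to $X_i\wedge Y_j$. Since colimits over a product category are computed iteratively and the monoidal product of a (closed) monoidal model category preserves colimits in each variable separately, I would compute
\[
\colim_{I\times J}(X\wedge Y)\;\cong\;\colim_{i\in I}\colim_{j\in J}\bigl(X_i\wedge Y_j\bigr)\;\cong\;\colim_{i\in I}\bigl(X_i\wedge\colim_J Y\bigr)\;\cong\;\bigl(\colim_I X\bigr)\wedge\bigl(\colim_J Y\bigr),
\]
naturally in $X$ and $Y$. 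This produces a natural isomorphism of functors $\cM^I\times\cM^J\to\cM$
\[
\colim_{I\times J}\circ(-\wedge-)\;\cong\;(-\wedge-)\circ\bigl(\colim_I\times\colim_J\bigr),
\]
where on the right $-\wedge-\colon\cM\times\cM\to\cM$ denotes the internal monoidal product.

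\emph{Step 2 (passage to homotopy categories).} Both sides of the last display are composites of left Quillen functors and left Quillen bifunctors: on the left, $-\wedge-\colon\cM^I\times\cM^J\to\cM^{I\times J}$ is a left Quillen bifunctor by Proposition~\ref{totalderivedobjectwise} (note that $I\times J$ is again direct, with degree the sum of the degrees, so the projective model structure and $\hocolim_{I\times J}$ are available), and $\colim_{I\times J}$ is left Quillen; on the right, $\colim_I$ and $\colim_J$ are left Quillen and $-\wedge-\colon\cM\times\cM\to\cM$ is a left Quillen bifunctor by the pushout--product axiom. I would then invoke the two-variable form of Ken Brown's lemma — a left Quillen bifunctor sends a pair of cofibrant objects to a cofibrant object and a pair of weak equivalences between cofibrant objects to a weak equivalence — to conclude, exactly as in the one-variable statement \cite[Theorem~1.37]{Hovey}, that the total left derived functor of each of these composites is the composite of the respective total left derived functors. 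This identifies $\bbL\bigl(\colim_{I\times J}\circ(-\wedge-)\bigr)$ with $\hocolim_{I\times J}\circ(-\wedge^{\bbL}-)$ and $\bbL\bigl((-\wedge-)\circ(\colim_I\times\colim_J)\bigr)$ with $(-\wedge^{\bbL}-)\circ\bigl(\hocolim_I\times\hocolim_J\bigr)$. Finally, a natural isomorphism between two left-derivable functors induces a natural isomorphism between their total left derived functors (both are computed by evaluation on cofibrant objects, on which the two agree up to the isomorphism of Step 1), which yields the claimed natural isomorphism $\hocolim_{I\times J}(X\wedge^{\bbL}Y)\cong(\hocolim_I X)\wedge^{\bbL}(\hocolim_J Y)$.

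\emph{Expected main obstacle.} The only delicate point is the bifunctor version of ``the derived functor of a composite is the composite of the derived functors'': one must verify that the external product of projective-cofibrant diagrams is again projective-cofibrant in $\cM^{I\times J}$ and that the relevant weak equivalences are preserved, so that a single cofibrant-replacement step computes the derived functor of the whole composite. This is precisely the combination of $-\wedge-$ being a Quillen bifunctor (Proposition~\ref{totalderivedobjectwise}) with Ken Brown's lemma; everything else — the iterated-colimit identity and the colimit-preservation of $\wedge$ — is routine. When $\cM$ is in addition simplicial, an alternative route is to realize both homotopy colimits via the bar construction of Section~\ref{sec:prelims}, using that $\srep_{I\times J}(X\wedge Y)$ is the diagonal of the bisimplicial object $\srep_I X\wedge\srep_J Y$ together with the compatibility of geometric realization with the monoidal product, but the derived-functor argument above is cleaner and works in full generality.
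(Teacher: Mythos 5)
Your proposal is correct and follows essentially the same route as the paper: the paper's proof also combines the strict point-set identity $\colim_{I\times J}(X\wedge Y)\cong(\colim_I X)\wedge(\colim_J Y)$ with the fact (Proposition~\ref{totalderivedobjectwise}) that the external product preserves diagram-cofibrant objects and weak equivalences between them, so that evaluating on cofibrant diagrams computes all derived functors at once. Your write-up merely spells out the iterated-colimit derivation of the strict formula and the bifunctor form of Ken Brown's lemma, both of which the paper leaves implicit.
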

\begin{proof}
From Proposition \ref{totalderivedobjectwise}, it follows that the external product preserves diagram cofibrant objects and preserves trivial diagram cofibrations between diagram cofibrant objects. The result now follows from the strict formula
\[ \colim_{I\times J}(X\wedge Y)\cong (\colim_I X) \wedge (\colim_JY)  \]
as all the objects involved are cofibrant.
\end{proof}

As a consequence of Proposition \ref{totalderivedobjectwise}, we also obtain the following.

\begin{example}\label{diagramsismonoidal}
	Let $(\cM,\wedge)$ be a monoidal model category and let $J$ be a direct category. Consider the diagram category $\cM^J$ with the model structure \ref{modelondiagrams}. The category $\cM^J$ inherits a monoidal structure
	\[\cM^J\x \cM^J\to \cM^J, \ \ (X,Y) \mapsto X\wedge Y, \]
	where $X\wedge Y$ is the diagram $j\mapsto X_j\wedge Y_j$. By a proof analogous to Proposition \ref{totalderivedobjectwise}, $(\cM^J,\wedge)$ is a monoidal model category.
\end{example}

\begin{cor}\label{conemonoidal}
Let $(\cM,\wedge)$ be a pointed symmetric monoidal model category, and let $f\colon X\lra Y$ and $g\colon U\lra V$ be morphisms in $\cM$. There is a canonical isomorphism
\[\hocofib(f)\wedge^{\bbL} \hocofib(g)\cong \hocofib(f\boxprod^{\bbL} g). \]
\end{cor}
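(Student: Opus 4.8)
The plan is to express each homotopy cofiber as a homotopy colimit over the pushout-corner poset $\ulcorner$ and then feed this into the multiplicativity of the external product under homotopy colimits, which is precisely Corollary \ref{hocolimIJ}. Concretely, write $\hocofib(f) = \hocolim_{\ulcorner}(\ast \leftarrow X \xrightarrow{f} Y)$ and $\hocofib(g) = \hocolim_{\ulcorner}(\ast \leftarrow U \xrightarrow{g} V)$, regarding the corner diagrams as objects $D_f, D_g \in \Ho(\cM^{\ulcorner})$ (we may take representatives with $X,Y,U,V$ cofibrant). Applying Corollary \ref{hocolimIJ} with $I = J = \ulcorner$ gives
\[ \hocofib(f) \wedge^{\bbL} \hocofib(g) \;=\; \bigl(\hocolim_{\ulcorner} D_f\bigr)\wedge^{\bbL}\bigl(\hocolim_{\ulcorner} D_g\bigr) \;\cong\; \hocolim_{\ulcorner\times\ulcorner}\bigl(D_f \wedge^{\bbL} D_g\bigr), \]
where $D_f \wedge^{\bbL} D_g \in \Ho(\cM^{\ulcorner\times\ulcorner})$ is the external product of the two corner diagrams.

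The second step is to identify the right-hand side with $\hocofib(f\boxprod^{\bbL} g)$. For this I would use the reduction-of-indexing-category technique already exploited in the paper: there is a map of posets $q\colon \ulcorner\times\ulcorner \lra \ulcorner$ (analogous to the $\pr\colon \square\lra[1]$ and $\pro\colon\cC_N\times\cC_N\lra\cD_N$ maps), and by Corollary \ref{landoesnotchangecolimit} the homotopy colimit over $\ulcorner\times\ulcorner$ of $D_f\wedge^{\bbL} D_g$ agrees with the homotopy colimit over $\ulcorner$ of $\bbL q_!(D_f\wedge^{\bbL} D_g)$. One then computes $\bbL q_!(D_f\wedge^{\bbL} D_g)$ pointwise via Proposition \ref{holanformula}. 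The four objects of $\ulcorner\times\ulcorner$ involving a basepoint factor contribute a basepoint after smashing (since $\cM$ is pointed, $\ast\wedge^{\bbL}(-)\simeq\ast$), so the only surviving corner of the $4\times 4$ square is the sub-$\ulcorner$ on $X\wedge Y$, $Y\wedge X$-type vertices; identifying the slice categories shows that the resulting $\ulcorner$-diagram is exactly the corner diagram $\ast \leftarrow \bigl(X\wedge V\coprod_{X\wedge U} Y\wedge U\bigr) \to Y\wedge V$ computing $\hocofib(f\boxprod^{\bbL} g)$, using the Kan-extension description of the pushout-product recalled in the remark before Example \ref{diagramsismonoidal}.

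Alternatively — and perhaps more cleanly — one can avoid the $q$-reduction and argue directly: $\hocofib(h)$ for any $h$ is the homotopy colimit over $\ulcorner$, which equals the iterated homotopy pushout, so $\hocofib(f)\wedge^{\bbL}\hocofib(g)$ is a homotopy colimit over $\ulcorner\times\ulcorner$ of the external product, and one filters this colimit: computing the homotopy colimit in the $D_g$-variable first produces the diagram $\ulcorner \to \Ho(\cM)$ sending a vertex $w$ to $D_f(w)\wedge^{\bbL}\hocofib(g)$, and one checks by the distributivity of $\wedge^{\bbL}$ over homotopy pushouts (itself an instance of Corollary \ref{hocolimIJ} applied one variable at a time, together with $\ast\wedge^{\bbL}(-)\simeq\ast$) that this is the corner diagram computing $\hocofib(f\boxprod^{\bbL} g)$. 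I expect the main obstacle to be bookkeeping rather than anything conceptual: one must be careful that all the isomorphisms furnished by Corollary \ref{hocolimIJ} and Proposition \ref{holanformula} are natural and compatible, so that collapsing the basepoint vertices of $\ulcorner\times\ulcorner$ genuinely yields the pushout-product corner on the nose (up to coherent isomorphism) and not merely abstractly. The symmetry hypothesis on $(\cM,\wedge)$ is used precisely to make sense of $f\boxprod^{\bbL} g$ and its homotopy cofiber as in the earlier remark.
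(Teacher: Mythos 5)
Your proposal is correct and follows essentially the same route as the paper's own proof: apply Corollary \ref{hocolimIJ} to rewrite the smash of the two cofibers as a homotopy colimit over $\ulcorner\times\ulcorner$, then left Kan extend along a poset map $\ulcorner\times\ulcorner\to\ulcorner$ and compute the three values via Proposition \ref{holanformula} to recover the corner diagram for $f\boxprod^{\bbL}g$. (Minor bookkeeping: $\ulcorner\times\ulcorner$ is a $3\times 3$ grid with five basepoint vertices, not a $4\times 4$ square with four, but this does not affect the argument.)
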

We will provide a proof since it is important to our exposition. A different proof can be found in \cite[Proposition 4.1]{hovey2014smith}.

\begin{proof}
We may assume that $X,Y, U, V$ are cofibrant in $\cM$. By definition,
\[\hocofib(f)\wedge^{\bbL} \hocofib(g)= \hocolim(*\leftarrow X\xrightarrow{f} Y) \wedge^{\bbL} \hocolim(*\leftarrow U\xrightarrow{g} V)  .\]
By Corollary \ref{hocolimIJ}, this is isomorphic to
\begin{equation}\label{double}
	\hocolim
	\left(
	\begin{tikzcd}
		\ast & \arrow[l] X\wedge V \arrow[r] & Y\wedge V \\
		\ast \arrow[u] \arrow[d]   &\arrow[l]  X\wedge U \arrow[d] \arrow[r] \arrow[u]   & Y\wedge U \arrow[u] \arrow[d] \\
		\ast		 & \arrow[l]   \ast \arrow[r]       & \ast
	\end{tikzcd}
	\right).
\end{equation}
We denote the above underlying $\ulcorner\times \ulcorner$-diagram by $\cZ$. We define the following map of posets 
\begin{align*}
	\pr\colon \ulcorner\times \ulcorner& \to \ulcorner \\
	\left((1,0), (1,0)\right)           &\mapsto (1,0) \\
	\left((0,0), (0,0)\right),    \left((0,0), (1,0)\right),   \left((1,0), (0,0)\right) &\mapsto (0,0)\\
	\text{else}                                   &\mapsto (0,1),
\end{align*}
and consider the homotopy left Kan extension 
\begin{equation}\label{holanleftcone}
	\bbL\pr_!\colon \Ho(\cM^{\ulcorner\times \ulcorner})\to \Ho(\cM^{\ulcorner}).
\end{equation}
Applying the formula Proposition \ref{holanformula} to the diagram $\cZ$ we obtain
$(\bbL\pr_!\cZ)_{(1,0)}= Y\wedge V $. Next, for the object $(0,0)$ the slice category $\pr/(0,0)$ is just the poset $\ulcorner$ and we have
\[
(\bbL\pr_!\cZ)_{(0,0)}= 
\hocolim\left( 
\begin{tikzcd}
	X\wedge U \arrow[d,"1\wedge g "] \arrow[r,"f\wedge 1"] & Y\wedge U \\
	X\wedge V                          &
\end{tikzcd}
\right) \]
and finally, $(\bbL\pr_!\cZ)_{(0,1)}\cong \ast$. Note that 
\[(\bbL\pr_!\cZ)_{(0,0)} \to (\bbL\pr_!\cZ)_{(1,0)}= f\boxprod^{\bbL} g.\] Hence, the homotopy left Kan extension \eqref{holanleftcone} of the underlying diagram \eqref{double} is the following $\ulcorner$-diagram.
\[
\begin{tikzcd}
	\displaystyle{X\wedge V \coprod^h_{X\wedge U}} Y\wedge U \arrow[d] \arrow[r] & Y\wedge V \\
	\ast                                            &
\end{tikzcd}\]
It follows directly that the homotopy colimit of this diagram is 
\[\hocofib(f\boxprod^{\bbL} g).\]
\end{proof}

\subsubsection{Stable Model Categories and Triangulated Categories}
Recall that the homotopy category $\Ho(\cM)$ of a pointed model category $\cM$ supports a \emph{suspension} functor 
\[\Sigma\colon \Ho(\cM)\lra \Ho(\cM)  \]
given by 
\[\Sigma X:= \hocolim(\ast \leftarrow X \lra \ast),  \]
with a right adjoint functor
\[\Omega\colon \Ho(\cM)\lra \Ho(\cM)\]
given by 
\[\Omega X= \holim(*\lra X \leftarrow *   ).\]

\begin{defn}\label{stablemodelcat}
	A \emph{stable model category} is a pointed model category for which the functors $\Sigma$ and $\Omega$ are inverse equivalences.
\end{defn}

\begin{example}
The prototypical example of a stable model category is the category of spectra, $\Sp$. There are of course many variants of spectra, but as our result does not depend on a choice of suitable, monoidal model category, we will not need to specify this further.
\end{example}

\begin{example}\label{twistedcomplexes}
Let $\cA$ be a graded abelian category with enough projectives, and let $\twc$ denote the category of \emph{twisted} $([1],1)$-chain complexes or \emph{differential} objects. An object of $\twc$ is a pair $(M_*,d)$ with $M_*\in \cA$ together with a morphism 
(the differential)
\[d\colon M_*\lra M_*[1],\]
such that $d[1]\circ d=0$. %In other words, they are differential graded objects. 
The category $\twc$ admits a stable model structure, the \emph{projective model structure}, where the weak equivalences are the homology isomorphisms and the fibrations are the surjections. In particular, the cofibrant objects are the projective objects of $\cA$. We let $\dtwc$ denote the homotopy 
category of $\twc$. For an object $(M_*,d)\in \twc$ we define the homology $H(M)= \ker d / \operatorname{im} d  $, and so we have the homology functor
\[H_*\colon \dtwc\lra \cA.\]
In the following we will let $(\cA,\otimes,\mathbf{1})$ be an abelian symmetric monoidal category with enough projectives. In this case $(\twc,\otimes)$ is a monoidal stable model category. Finally, we mention the homology functor $H_*\colon \dtwc\lra \cA$ is a lax symmetric monoidal functor via the K\"unneth morphism.

\bigskip
We note that our methods throughout this paper also work in a setting where $\cA$ does not have enough projectives. In the case of $\cA=E(1)_*E(1)\mbox{-comod}$, $\twc$ can be equipped with a model structure where the cofibrant twisted chain complexes are degreewise projective as $E(1)_*$-modules. This \emph{relative projective model structure} is also monoidal, see \cite[Section 5]{BR}.
\end{example}

If $\cM$ is a pointed simplicial model category, then the suspension functor 
\begin{equation*}
	\Sigma\colon \Ho(\cM)\lra \Ho(\cM)
\end{equation*}
admits a simple description. Indeed, by the simplicial model category axioms, the functor 
\begin{equation*}
	S^1\wedge -\colon \cM\lra \cM
\end{equation*}
defined using the tensor with simplicial sets
is a left Quillen functor. Then, $\Sigma$ can be defined as the left derived functor of $S^1\wedge -$, \ie, 
\[\Sigma X:= S^1\wedge^{\bbL} X= S^1\wedge QX,\] 
see \cite[6.1.1]{Hovey}. Note that if $\cM$ is stable, then the homotopy category $\Ho(\cM)$ is a triangulated category with $\Sigma$ a shift functor, see \cite[Theorem 4.2.1]{BR} and \cite[7.1.6]{Hovey}. %We will not recall here the definition of a triangulated category, we refer to \cite[Definition 4.1.2]{BR}. 

In a simplicial model category $\cM$ we can choose a particular \emph{model} for the homotopy cofiber \eqref{homotopycofiber} of a morphism, which will help with computations. It is called the \emph{mapping cone} construction. 

\begin{defn}\label{mappingcone}
Suppose $\cM$ is a simplicial stable model category and $f\colon X\lra Y$ a morphism in $\cM_{\mathrm{cof}}$. Let $\cone(f)$ be the pushout of $f$ 
along the canonical morphism 
\[\operatorname{incl}\otimes 1\colon S^0\otimes X\lra (I,0)\otimes X=CX,\] 
that is, $\cone(f)$ comes with the pushout square
\[
\begin{tikzcd}
	X \arrow[r,"f"] \arrow{d}[swap]{\operatorname{incl}\otimes 1} &  Y \arrow{d}\\
	CX \arrow[r]                            & \cone(f).
\end{tikzcd}   \] 
Here $CX=(I,0) \otimes X$ denotes the \emph{cone} of $X$. The natural map 
\[\pi\colon (I,0)\otimes X\lra S^1\otimes X \] and the trivial map \[\ast\colon Y\lra S^1\otimes X\] induce, using the universal property of pushput, a map
$\partial\colon \cone(f)\lra S^1\otimes X.$
\end{defn}
The fact that the mapping cone construction represents the homotopy cofiber and further details can be found in \cite[Section 4.3]{BRfound}. 

\begin{defn}\label{elementary}
Let $\cM$ be a simplicial stable model category and $f\colon X\lra Y$ a morphism in $\cM_{\mathrm{cof}}$. The \emph{elementary triangle} associated to $f$ is the triangle
\begin{equation*}
	X \xrightarrow{f} Y \xrightarrow{\iota} \cone(f) \xrightarrow{\partial} S^1\otimes X.
\end{equation*}
A triangle $(f,g,h)$
\begin{equation*}
	A \xrightarrow{f} B \xrightarrow{g} C \xrightarrow{h} \Sigma A
\end{equation*}
in $\Ho(\cM)$ is called \emph{distinguished} if it is isomorphic to an elementary one.
\end{defn}

\subsection{Homology of a Category with Coefficients in a Functor}\label{sec:homologyofacategory}
In this subsection we will introduce one our main tools, namely homology of a category with coefficients in a functor. It is a particular case of functor homology that assigns the groups 
$\operatorname{Tor}^{I}_*(F,G)$ to functors
$F\colon I\lra \cA$ and $G\colon I^{\opp}\lra \cA$ with $\cA$ an abelian category. Since we do not need such generality, we will introduce it in a more down-to-earth way using simplicial techniques that dates back to Quillen. Traditional references include
\cite{OB} and \cite{OB2}, more contemporary references include \cite{NTG3} and \cite[Chapters 15, 16]{RI20}.

 Before we define the homology of a category with coefficients in a functor we will define the associated complex of a simplicial object 
in an abelian category.
\begin{defn}\label{associatedcomplex}
	Let $D\in s\cA$ be a simplicial object in $\cA$. We define the \emph{associated complex} $(C_{\bullet}(U),\partial)\in \Ch_{\geq 0}(\cA)$ by
	\begin{displaymath}
		C_n(D)=D_n, \ \ \ \  \partial_n=\sum^n_{i=0}(-1)^nd_i\colon C_n(D)\lra C_{n-1}(D).
	\end{displaymath}
Note that the simplicial identities imply $\partial^2 = 0$, so $C_{\bullet}(D)$ is indeed
a chain complex. Moreover, this evidently defines a functor $C\colon s\cA\lra \Ch_{\geq 0}(\cA)$. In other words, the associated complex to a simplicial object $D\in s\cA$ is the following chain complex.
\begin{equation}
		\label{associated}
		D_0 \xleftarrow{d_0-d_1} D_1 \xleftarrow{d_0-d_1+d_2}  D_2 \leftarrow \ldots.
	\end{equation}
\end{defn}
%For the following definition, we let $\cA$ be an arbitrary abelian category and $s\cA$ be the 
%category of simplicial objects in $\cA$.
\begin{defn}
	Let $I$ be a small category and consider a diagram $D\colon I\lra \cA$. The \emph{homology of the category} $I$ \emph{with coefficients in the functor} $D$ is defined as the homology of the complex $C_{\bullet}(D)$, \ie, the homology of the 
	associated complex of the simplicial replacement $\srep(D)\in s\cA$. 
\end{defn}
So, unwinding the definition, we start by first taking the simplicial replacement $\srep(D)\colon \Delta^{\opp}\lra \cA$  of $D$, see Definition \ref{simprep}, that is, the diagram 

\begin{equation*}
	\xymatrix{ \displaystyle{\bigoplus_{i_0}} D_{i_0}  &  \ar@<.5ex>[l] \ar@<-.5ex>[l]  \displaystyle{\bigoplus_{i_0\lra i_1}} D_{i_0}  & \ar@<1ex>[l] \ar[l] \ar@<-1ex>[l]   \displaystyle{\bigoplus_
			{i_0\lra i_1 \lra i_2}} D_{i_0} \cdots  .}
\end{equation*}
Then, we consider the associated chain complex \eqref{associated} $C_{\bullet}(D)$. Then we defined $H_p(I;D)$ to be the 
$p$th homology group of the chain complex $C_{\bullet}(D)$.

Now we will investigate how these constructions help us calculate homotopy colimits. First, recall the following.
\begin{defn}
	We call a functor $F_*\colon \Ho(\cM)\lra \cA$ \emph{homological} if it satisfies the following conditions.
	\begin{enumerate}[(i)]
		\item $F_*$ is a graded functor, that is to say, it commutes with suspensions, so there are natural equivalences 
		\[ F_*(\Sigma X)\cong F_*(X)[1]:= F_{*-1}(X) \]
		which are part of the structure.
		\item $F_*$ is additive, \ie, it commutes with arbitrary coproducts.
		\item $F_* $ converts distinguished triangles into long exact sequences.
		\item Furthermore, if $(\cM,\wedge)$ is a monoidal model category and $(\cA,\otimes)$ is a monoidal abelian category, we require that  $F_*$ is lax symmetric monoidal, that is, there is a natural K\"unneth morphism
		\[\kappa_{X,Y}\colon F_*X\otimes F_*Y \lra F_*(X\wedge^{\bbL} Y).   \]
	\end{enumerate}
\end{defn}
 Now let $\cM$ be a  simplicial stable model category, let $I$ be a direct category and let $X\in \Ho(\cM^I)$. Further, let
 \[F_*\colon \Ho(\cM)\lra \cA\]
 be a homological functor into an (graded) abelian category. Then there is a spectral sequence
 \begin{equation}\label{sshomotopycolimit}
E^2_{pq}=H_p(I;F_qX) \Rightarrow F_{p+q}(\hocolim_{J} X),
\end{equation}
see \cite[16.3.1]{RI20}. The construction of the spectral sequence \eqref{sshomotopycolimit} arises from the skeletal filtration of a simplicial object. This spectral sequence will play a central role in our calculations for the monoidal properties of $\cQ$
in Section \ref{sec:monoidalQ}.

\subsection{Franke's Realization Functor}\label{Frankefunctor}
In this subsection we will recall the construction of Franke's equivalence 
\[\cR\colon \dtwc\lra \Ho(\cM) .\]
For a detailed exposition we refer to \cite[Section 3.3]{PA12} and \cite{Ro}.
Recall that $\cC_N$ is the crown-shaped poset from Example \ref{CN}, and that the category $\dtwc$ above is the derived category of twisted chain complexes from Example \ref{twistedcomplexes}, where $\mathcal{A}$ is a graded symmetric monoidal hereditary abelian category with enough projectives, $\mathcal{M}$ is a simplicial stable model category, and $F: \Ho(\cM) \longrightarrow \mathcal{A}$ is a homological functor. Also, we assume $\mathcal{A}$ splits into shifted copies of another abelian category $\mathcal{B}$, $$\mathcal{A} = \bigoplus\limits_{i=0}^{N-1} \mathcal{B}[i]$$ for $N>1$. Under these assumptions, $\cR$ exists and is an equivalence. 

 For an object $X\in \cM^{\cC_N}$ we the structure morphisms of $X$ as follows.
\[l_i\colon \Xb{i}\lra \Xz{i}, \ \ k_i\colon \Xb{i-1}\lra \Xz{i}, \ \ i\in\bbZ/N\bbZ  \]
Furthermore, let 
\begin{align*} Z^{(i)}(X)= F_*(\Xz{i}), \ \  B^{(i)}(X)=F_*(\Xb{i}), \ \ C^{(i)}(X)=F_*(\cone(k_i)), \\
	\lambda^{(i)}\colon=F_* l_i\colon  B^{(i)}(X)\longrightarrow Z^{(i)}(X), i\in \bbZ/N\bbZ,
	\end{align*} 
where $\cone(k_i) $ denotes the cone construction from Definition \ref{mappingcone}. We will now list some additional assumptions that we need in order to assemble the $C^{(i)}$ into a chain complex $C_*$. 

\begin{defn}\label{def:L}
Consider the full subcategory $\cL$ of $\Ho(\cM^{\cC_N})$ consisting of those diagrams $X\in \Ho(\cM^{\cC_N})$ which satisfy the following conditions.
\begin{enumerate}[(i)]
\item The objects $\Xb{i}$ and $\Xz{i}$ are cofibrant in $\cM$ for any $i\in \bbZ/N\bbZ$.
\item The objects $F_*(\Xb{i})$ and $F_*(\Xz{i})$ are contained in $\cB[i]$ for any $i\in \bbZ/N\bbZ$.
\item The map $\lambda^{(i)}\colon F_*(\Xb{i})\to F_*(\Xz{i})$ is a monomorphism for any $i\in \bbZ/N\bbZ$.
\end{enumerate}
\end{defn}
Next we construct a functor 
\[\cQ\colon \cL\lra \twc.\]
Let $X$ be an object of $\cL$. As the functor
\[F_*\colon \Ho(\cM)\lra \cA  \]
is homological, the distinguished triangles
\[ \Xb{i-1} \xrightarrow{k_i} \Xz{i} \lra \cone(k_i)\lra \Sigma\Xb{i-1} \]
induce long exact sequences
\[ \ldots \lra B^{(i-1)}(X)\lra Z^{(i)}(X)\xrightarrow{\iota^{(i)}} C^{(i)}(X) \xrightarrow{\rho^{(i)}} B^{(i-1)}(X)[1]\lra  Z^{(i)}(X)[1] \lra \ldots   .\]
Note that $B^{(i-1)}(X)\in \cB[i-1]$ and $Z^{(i)}(X)\in \cB[i]$ for all $i\in\bbZ/N\bbZ$, since $X\in \cL$. Therefore, the morphisms $B^{(i-1)}(X)\to Z^{(i)}(X)$ and 
$B^{(i-1)}(X)[1]\to  Z^{(i)}(X)[1]$ are zero.
As a consequence, for any $i\in \bbZ/N\bbZ$ we actually obtain short exact sequence in $\cA$
\begin{equation}\label{sexci} 
0\lra  Z^{(i)}(X) \xrightarrow{\iota^{(i)}} C^{(i)}(X) \xrightarrow{\rho^{(i)}} B^{(i-1)}(X)[1]\lra 0.
\end{equation}
Now consider the following objects in $\cA$.
\begin{align*}
C_*(X)&=C^{(0)}(X)\oplus  C^{(1)}(X) \oplus \ldots \oplus C^{(N-1)}(X)\\
Z_*(X)&=Z^{(0)}(X)\oplus  Z^{(1)}(X) \oplus \ldots \oplus Z^{(N-1)}(X)\\
B_*(X)&=B^{(0)}(X)\oplus  B^{(1)}(X) \oplus \ldots \oplus B^{(N-1)}(X)
\end{align*}
The morphisms $\lambda^{(i)}, \iota^{(i)}, \rho^{(i)}, i\in\bbZ/N\bbZ$, induce morphisms between the direct sums

\begin{align*}
\lambda &\colon B_*(X) \lra Z_*(X), & \lambda &= \lambda^{(0)} \oplus \lambda^{(1)} \oplus \ldots \oplus \lambda^{(N-1)}, \\
  \iota &\colon Z_*(X) \lra C_*(X), &   \iota &=   \iota^{(0)} \oplus   \iota^{(1)} \oplus \ldots \oplus   \iota^{(N-1)}, \\
   \rho &\colon C_*(X) \lra B_*(X)[1], & \rho &=    \rho^{(0)} \oplus    \rho^{(1)} \oplus \ldots \oplus \rho^{(N-1)}.
\end{align*}
After summing up, we get a short exact sequence of objects in $\cA$
\begin{equation}\label{sexcstar}
0\lra Z_*(X)\xrightarrow{\iota} C_*(X)\xrightarrow{\rho} B_*(X)[1] \lra 0.
\end{equation}
Splicing this short exact sequence with its shifted copy gives an object in $\twc$. More precisely, define
\[d=\iota[1]\lambda[1]\rho\colon C_*(X)\lra C_*(X)[1].  \]
We have $d^2=0$ by construction and therefore we get a $([1],1)$-twisted complex. We have now arrived at the definition
\[ \cQ\colon \cL\lra \twc, \ \ \ \cQ(X)= \left(\bigoplus_{i\in \bbZ/N\bbZ} F_*(\cone(k_i)),d\right) = (C_*(X),d). \]
It can be shown that $\cQ$ is in fact an equivalence of categories.
%Franke's realization functor $\cR\colon \dtwc\lra \Ho(\cM)$ is then defined by the composite
The composite
\begin{equation}\label{recfunctor}
\twc \xrightarrow{\cQ^{-1}} \cL\xrightarrow{\hocolim} \Ho(\cM). 
\end{equation}
factors over $\dtwc \lra \Ho(\cM)$, which is Franke's realization functor $\cR.$
 It follows from the construction of $\cR$ that it commutes with suspensions and that $F_*\circ \cR\cong H_*$.

\section{Monoidal Properties of \(\cQ\)}\label{sec:monoidalQ}

In this section, we will examine properties of the bifunctor
\[ i^*\bbL\pr_!(-\wedge^{\bbL}-)\colon \Ho(\cM^{\cC_N}) \times \Ho(\cM^{\cC_N})\lra \Ho(\cM^{\cC_N})\] 
via Theorem \ref{theoremA}, which is one of the main ingredients of the diagram \ref{bigdiagram}.

\subsection{Preliminaries on Crowned Diagrams}

Recall the poset $\cC_N$ from Example \ref{CN} (the crown shape with two rows) and the poset $\cD_N$ from Example \ref{DN} (the crown shape with three rows). We will be interested in two functors between these two categories. The first functor is the \emph{projection functor} 
\begin{align}
\pr\colon \cC_N\times \cC_N&\lra \cD_N  \label{definitionpr} \\
 (\beta_i,\beta_j) &\mapsto \beta_{i+j}  \nonumber\\ 
(\zeta_i,\zeta_j) &\mapsto \zeta_{i+j} \nonumber \\
(\zeta_i,\beta_j) &\mapsto \gamma_{i+j} \nonumber\\
(\beta_i,\zeta_j) &\mapsto \gamma_{i+j} \nonumber. 
\end{align}
Note, that we really should be writing $\beta_{i (\mathrm{mod} N)}$ and $\gamma_{i+j (\mathrm{mod} N)}$ etc. but we commit a small abuse of notation and avoid this. The other functor that we will be 
interested in is the functor 
\begin{equation}\label{definitioni}
i\colon \cC_{N} \lra \cD_{N}, \ \  \zeta_n\mapsto \zeta_n,\ \  \beta_n\mapsto \gamma_n,\end{equation} 
which is the inclusion of the crown shape $\cC_N$ into the bottom two rows of $\cD_N$.
Since weak equivalences in the diagram categories are given objectwise, the functor $i^*\colon \cM^{\cD_N}\lra \cM^{\cC_N}$ 
preserves weak equivalences.
Thus, it defines a functor on the homotopy categories, which we denote by the same letter, that is,
\[i^*\colon \Ho(\cM^{\cD_N})\lra \Ho(\cM^{\cC_N}).  \]
Next, recall the external smash product for diagrams $X\in \cM^{I}$ and $Y\in \cM^{J}$ for $I$ and $J$ direct categories from Definition \ref{objectwisesmash}. By 
choosing $I=J=\cC_N$, it follows formally that we have the bifunctor
\begin{equation}\label{CNobjectwisesmash}
-\wedge-\colon \cM^{\cC_N}\times \cM^{\cC_N}\lra \cM^{\cC_N\times \cC_N}.
\end{equation}
By Proposition \ref{totalderivedobjectwise}, the external product has a total left derived functor
\begin{equation}\label{derivedsmashlone}
-\wedge^{\bbL}-\colon\Ho(\cM^{\cC_N})\times \Ho(\cM^{\cC_N})\lra \Ho(\cM^{\cC_N\times \cC_N}).
\end{equation}
Given diagrams $X,Y\in \Ho(\cM^{\cC_N})$, we can define the homotopy left Kan extension of the external smash product 
$X\wedge^{\bbL}Y \in \Ho(\cM^{\cC_N\times \cC_N})$ along the projection functor $\pr\colon \cC_N\times\cC_N\to \cD_N$, that is, 
\begin{displaymath}
E=\bbL\pr_!(X\wedge^{\bbL}Y) \in \Ho(\cM^{\cD_N}).
\end{displaymath}
Now that we have all the necessary ingredients we can finally state the following theorem.

\begin{thm}\label{theoremA}
The bifunctor
\[i^*\bbL\pr_!(-\wedge^{\bbL}-)\colon \Ho(\cM^{\cC_N}) \times \Ho(\cM^{\cC_N})\lra \Ho(\cM^{\cC_N})\] satisfies the following. Let $X, Y\in \cL$ such that $F_*(\Xa{n}), F_*(\Ya{n})\in \cAp$ for any $n\in \bbZ/N\bbZ$ and any $\alpha\in \left\{\beta,\zeta\right\}$.
Then, $i^*\bbL\pr_!(X\wedge^{\bbL}Y) \in \cL$, that is to say, we have a bifunctor
\[i^*\bbL\pr_!(-\wedge^{\bbL}-)\colon \cL\x \cL\lra \cL.\]
Furthermore, there is a natural isomorphism
\begin{equation*}
\cQ(i^*\bbL\pr_!(X\wedge^{\bbL}Y))\cong \cQ(X)\otimes \cQ(Y).
\end{equation*}
\end{thm}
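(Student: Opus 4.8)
The plan is to split the proof into two halves mirroring the structure of Theorem~\ref{theoremA}: first verify that $i^*\bbL\pr_!(X\wedge^{\bbL}Y)$ lands in $\cL$, and then produce the natural isomorphism $\cQ(i^*\bbL\pr_!(X\wedge^{\bbL}Y))\cong\cQ(X)\otimes\cQ(Y)$. Throughout I would assume $X,Y$ are objectwise cofibrant representatives (possible since $X,Y\in\cL$ already forces cofibrancy of the $\Xb{i},\Xz{i},\Ya{i}$ and we can arrange the intermediate maps to be cofibrations by the crowned-diagram cofibrancy criterion of Example~\ref{CN}), so that $X\wedge^{\bbL}Y$ is computed by the strict external smash $X\wedge Y\in\cM^{\cC_N\times\cC_N}$ and the homotopy Kan extension $\bbL\pr_!$ is computed by the pointwise formula of Proposition~\ref{holanformula}.

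For the first half, I would compute the three relevant vertices of $E=\bbL\pr_!(X\wedge^{\bbL}Y)$ after restricting along $i$, namely $E_{\gamma_n}$ and $E_{\zeta_n}$ (these are $(i^*E)_{\beta_n}$ and $(i^*E)_{\zeta_n}$). By Proposition~\ref{holanformula} each is a homotopy colimit over the slice category $\pr/d\subseteq\cC_N\times\cC_N$ of the external smash diagram. The key computational input is the spectral sequence~\eqref{sshomotopycolimit} with coefficients in the homological functor $F_*$: its $E^2$-page is $H_p(\pr/d;F_q(X\wedge Y))$, and since $X,Y\in\cL$ the graded modules $F_*(\Xb{i}),F_*(\Xz{i})$ are concentrated in the single summand $\cB[i]$ of $\cA=\bigoplus_{i=0}^{N-1}\cB[i]$, and by hypothesis the relevant $F_*(\Xa{n}),F_*(\Ya{n})$ are projective in $\cAp$; the K\"unneth morphism $\kappa$ together with projectivity forces the higher $\operatorname{Tor}$-terms to vanish and identifies the homology of the slice category with the plain tensor product of the relevant graded pieces. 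This is the step I expect to be the main obstacle: one must carefully identify the slice categories $\pr/\gamma_n$ and $\pr/\zeta_n$ as explicit posets, recognize their bar-construction homology, and check that the spectral sequence collapses to give $F_*(E_{\zeta_n})\cong\bigoplus_{i+j=n}F_*(\Xz{i})\otimes F_*(\Yz{j})\in\cB[n]$ and similarly $F_*(E_{\gamma_n})\cong\bigoplus_{i+j=n}\bigl(F_*(\Xb{i})\otimes F_*(\Yz{j})\bigr)\oplus\bigl(F_*(\Xz{i})\otimes F_*(\Yb{j})\bigr)$ modulo the identification forced by $\lambda$; once this is in hand, conditions (i)--(iii) of Definition~\ref{def:L} follow, with (iii) coming from the fact that the map $\lambda$ for $i^*E$ is, up to the collapse, $\lambda_X\boxprod\lambda_Y$ composed with inclusions, hence a monomorphism since $\cA$ is hereditary and $\lambda_X,\lambda_Y$ are mono.

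For the second half, I would unwind the definition of $\cQ$ from Section~\ref{Frankefunctor} applied to $i^*E$. The underlying graded module is $\bigoplus_n F_*(\cone(k_n^{i^*E}))$; using the short exact sequence~\eqref{sexci} for $i^*E$ together with the vertex computations above, one identifies $C^{(n)}(i^*E)$ as an extension of $\bigoplus_{i+j=n}B^{(i-1)}(X)\otimes B^{(j)}(Y)\oplus B^{(i)}(X)\otimes B^{(j-1)}(Y)[1]$-type pieces matching exactly the degree-$n$ part of $C_*(X)\otimes C_*(Y)$ under the tensor product of twisted complexes formula $(C_*(X)\otimes C_*(Y))_n=\bigoplus_{i+j=n}C^{(i)}(X)\otimes C^{(j)}(Y)$; here Corollary~\ref{conemonoidal}, $\cone(k_i)\wedge^{\bbL}\cone(k_j)\cong\cone(k_i\boxprod^{\bbL}k_j)$, is the geometric engine making the cones multiply correctly, and its interaction with $\bbL\pr_!$ must be tracked. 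Finally I would check that the differential $d=\iota[1]\lambda[1]\rho$ on $\cQ(i^*E)$ agrees with the Leibniz differential $d\otimes1\pm1\otimes d$ on $\cQ(X)\otimes\cQ(Y)$: this amounts to chasing the connecting maps $\iota,\rho$ through the collapsed spectral sequences and verifying compatibility of $\lambda$ with the external smash, which reduces to the compatibility of $\boxprod$ with $\lambda_X,\lambda_Y$ already established. Naturality in $X$ and $Y$ is automatic since every identification used (the pointwise Kan extension formula, the K\"unneth morphism, the cone formula of Corollary~\ref{conemonoidal}) is natural.
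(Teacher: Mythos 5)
Your overall architecture matches the paper's: a spectral-sequence computation of $F_*$ of the vertices of $E=\bbL\pr_!(X\wedge^{\bbL}Y)$ to establish membership in $\cL$, followed by an identification of the cones via Corollary \ref{conemonoidal} and a check of the differential. However, there are two genuine problems. First, the outcome you predict for the key computation is wrong. For the slice categories $\pr/\zeta_n$ and $\pr/\gamma_n$ the $E^2$-page of \eqref{sshomotopycolimit} does \emph{not} live in a single filtration degree: $H_0$ with coefficients in $F_n$ and $H_1$ with coefficients in $F_{n-1}$ are \emph{both} nonzero, the latter contributing a term $\bigoplus B^{(i)}\otimes\widetilde B^{(j)}$ coming from the ``bottom row'' of the slice poset. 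Consequently $F_n(E_{\zeta_n})$ is a nontrivial extension of that $H_1$-term by $\bigoplus_{i+j=n}Z^{(i)}\otimes\widetilde Z^{(j)}$ (and similarly for $E_{\gamma_n}$, with the pushout replacing the tensor of the $Z$'s), not the plain tensor product you wrote. The degree count still places everything in $\cB[n]$, but condition (iii) of Definition \ref{def:L} then requires comparing the two short exact sequences via the five lemma, with injectivity of the left-hand vertical map supplied by the fact that the pushout-product of monomorphisms between projectives in a hereditary category is a monomorphism (Lemma \ref{ppinjective}). Your simplified identifications would make (iii) immediate, but they are not what the spectral sequence gives.

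Second, the verification that $d=\iota[1]\lambda[1]\rho$ on $\cQ(i^*E)$ equals the Leibniz differential on $\cQ(X)\otimes\cQ(Y)$ is the hardest part of the theorem, and ``chasing the connecting maps through the collapsed spectral sequences'' does not constitute an argument: one must identify, in $\Ho(\cM)$, the composite $\cone(\widehat k_n)\to\Sigma E_{\gamma_{n-1}}\to\Sigma E_{\zeta_{n-1}}\to\Sigma\cone(\widehat k_{n-1})$ after applying $F_*$, and for arbitrary $X,Y$ this is not tractable directly. The missing idea is a reduction, via naturality of $\cQ$ and the maps $D^s(L_s)\to L_*$, to the case where $X$ and $Y$ are the crowned diagrams realizing disk complexes; for disks the vertices of $E$ collapse to (suspensions of) $A\wedge\widetilde A$ and the composite is identified explicitly as a pinch/diagonal map $\Sigma A\wedge\widetilde A\to(\Sigma A\wedge\widetilde A)\vee(\Sigma A\wedge\widetilde A)$, which is exactly the Leibniz rule. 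Relatedly, your plan to ``track'' the interaction of $\cone(k_i)\wedge^{\bbL}\cone(\widetilde k_j)$ with $\bbL\pr_!$ needs substance: the paper does this by forming the objectwise cone of the counit $\bbL\phi_!\phi^*(X\wedge^{\bbL}Y)\to X\wedge^{\bbL}Y$ over $\pr/\zeta_n$ and commuting cones with homotopy colimits, which is the mechanism producing $\bigvee_{i+j=n}\cone(k_i\boxprod^{\bbL}\widetilde k_j)$.
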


The theorem has two parts. 
First, we show that  
 $i^*\bbL\pr_!(-\wedge^{\bbL}-)$ is in fact a bifunctor
\[i^*\bbL\pr_!(-\wedge^{\bbL}-)\colon \cL\x \cL\lra \cL.\] 
The second part is that for any two crowned diagrams $X,Y \in \cL$ 
satisfying the stated hypotheses, there is a natural isomorphism
\begin{equation*}
\cQ(i^*\bbL\pr_!(X\wedge^{\bbL}Y))\cong \cQ(X)\otimes \cQ(Y).
\end{equation*}
The two parts combined us that the following diagram commutes (up to natural isomorphism)
\[
\xymatrix{
 \twc    \times  \twc  \ar[d]_{\otimes} & \ar[l]_{\ \ \ \ \ \ \ \ \ \ \ \ \cQ\x\cQ} \cL\times \cL \ar[d]^{i^*\bbL\pr_!} \\
 \twc                 & \cL \ar[l]^{\cQ}.}
\]
The first part of Theorem \ref{theoremA} is the content of Subsection \ref{spectralsequence} and Proposition \ref{propA}. The natural isomorphism 
\begin{equation*}
\cQ(i^*\bbL\pr_!(X\wedge^{\bbL}Y))\cong \cQ(X)\otimes \cQ(Y).
\end{equation*}
is the content of Subsections \ref{newcones} and \ref{differentials} and Proposition \ref{propB}.

\subsection{Slice categories of the projection functor}\label{sec:slicecats}
Again, the values of $\bbL\pr_!(-\wedge^{\bbL}-)$ are given by the formula in Proposition \ref{holanformula}. That is, the values of $E$ at the objects of $\cD_N$ are given by
\begin{align}
E_{\gamma_n}&=\hocolim_{\pr/\gamma_n}(X\wedge^{\bbL} Y) \label{value1} \\
E_{\zeta_n}&=\hocolim_{\pr/\zeta_n}(X\wedge^{\bbL} Y) \label{value2} \\ 
E_{\beta_n}&=\hocolim_{\pr/\beta_n}(X\wedge^{\bbL} Y). \label{value3} 
\end{align}
The structure morphisms of the diagram $E$,  $\widehat{l}_n\colon E_{\gamma_n}\to E_{\zeta_n}$ and $\widehat{k}_n\colon E_{\gamma_{n+1}}\to E_{\zeta_n}$, are the edges of the homotopy Kan extension and are given by the natural maps
\begin{align}
E_{\gamma_n}\cong \hocolim_{\pr/\gamma_n}(X\wedge^{\bbL}Y)&\lra \hocolim_{\pr/\zeta_n}(X\wedge^{\bbL} Y)\cong E_{\zeta_n} \label{gammanzetan}   \\
E_{\gamma_{n+1}}\cong \hocolim_{\pr/\gamma_{n+1}}(X\wedge^{\bbL}Y)&\lra \hocolim_{\pr/\zeta_n}(X\wedge^{\bbL} Y)\cong E_{\zeta_n} \label{gammanonezetan}
\end{align}
induced by the maps of posets $\phi$ and $\psi$, respectively, see \ref{psiphi}.

Since we are interested in the homotopy Kan extension of the functor $\pr\colon \cC_N\times \cC_N\lra \cD_N$, we need to identify all the slice categories involved, \ie, 
$\pr/\zeta_n, \pr/\gamma_n$ and $\pr/\beta_n$. We have the following three cases.
\begin{enumerate}[(i)]
\item \label{zetan} The first case is $\pr/\zeta_n$. For $n\in \bbZ/N\bbZ$ and the object $\zeta_n$ we have the slice category $\pr/\zeta_n$
\begin{equation*}
\begin{tikzpicture}[scale=.8]
  \node (one) at (0,2) {$  (\zeta_i,\zeta_j)$};     %X_{\zeta_s}\wedge Y_{\zeta_t} \mathcal{B}^s\wedge \mathcal{B}^t
  \node (a) at (-4,-.5) {$(\beta_{i-1},\zeta_j)$};
  \node (b) at (-1,0) {$(\zeta_i,\beta_j)$};
  \node (c) at (1,0) {$(\beta_i,\zeta_j)$};
  \node (d) at (4,-0.5) {$(\zeta_i,\beta_{j-1})$};
  \node (zero) at (0,-2) {$(\beta_i,\beta_j)$}; 
	\node (minus) at (0,-4) {$(\beta_{i-1},\beta_{j-1})$};
	\node (minusone) at  (-4.5,-2.5) {$(\beta_{i-1},\beta_j)$};
	\node (minustwo) at  (4.5,-2.5) {$(\beta_i,\beta_{j-1})$}; 
	\node (extraleftone) at (-7,-.5)  {$\ldots$};   %Z^{i+1}\otimes \widetilde{B}^j
	\node (extralefttwo) at (-8,-2) {$\ldots$};
	\node (extrarightone) at (8,-2) {$\ldots$};
	\node (extrarighttwo) at (7,-.5) {$\ldots$};     %B^i\otimes \widetilde{Z}^{j+1}
 %\node (extraleftthree) at (-10,0) {$0$};
	%\node (extrarightthree) at (10,0) {$0$};
	\draw  [->]   (zero)--(b) ; %bue
	\draw [->]  (zero)--(c) ; %blue
\draw	 [->](b)-- (one) ;
\draw	 [->](c)--(one) ;
	\draw [->]  (minus)--(a); %%red
	\draw [->]   (minus)-- (d) ; %red
\draw	 [->]  (minusone)--(a) ; %red
	\draw [->]  (minusone) --(b) ; %blue
\draw	 [->]  (minustwo)--(c) ; %blue
	\draw [->] (minustwo)--(d); %red
\draw  [->]	(a)--(one) node[midway,above left] {$$}   ;
\draw	 [->] (d)--(one) node[midway, above right] {$$} ;
\draw [->]  (minusone) --(extraleftone) ;
\draw [->] (minusone)--(extralefttwo) ;
\draw [->] (minustwo)-- (extrarightone);
\draw [->] (minustwo)-- (extrarighttwo);
%\draw [->] (extralefttwo) --(extraleftthree); 
%\draw [->] (extraleftone)-- (extraleftthree) ;
%\draw [->] (extrarighttwo) --(extrarightthree); 
%\draw [->] (extrarightone)-- (extrarightthree) ;
\end{tikzpicture}
\end{equation*}
where $i+j\equiv n \ (\mathrm{mod}\ N)$. Note that all the non-identity morphisms are of the form $(1,l_i)$ or $(l_i,1)$ and similarly $(1,k_i)$ or $(k_i,1)$ for any $i\in\bbZ/N\bbZ$. The poset 
$\pr/\zeta_n$ follows the same pattern to the left and to the right.

\item \label{gamman}
Next we have the case $\pr/\gamma_n$. Let $n\in \bbZ/N\bbZ$ and consider now the slice category $\pr/\gamma_n$ which looks as follows,
\begin{equation*}
\begin{tikzpicture}[scale=1.7]
\node (zero)at (0,0) {$(\beta_i,\beta_j)$};
\node (one) at (1,1) {$(\beta_i,\zeta_j)$};
\node (two) at (-1,1){$(\zeta_i,\beta_j)$};
\node (three) at (2,0){$(\beta_i,\beta_{j-1})$};
\node (four)  at (-2,0){$(\beta_{i-1},\beta_j)$};
\node (five) at  (3,1){$(\zeta_{i+1},\beta_{j-1})$} ;
\node (six) at   (-3,1) {$(\beta_{i-1},\zeta_{i+1})$};
\node (seven) at  (4,0) {$\ldots$};
\node (eight) at (-4,0) {$\ldots$};
\draw [->] (zero)-- (one) ;
\draw [->] (zero)-- (two);
\draw [->](three)-- (one);
\draw[->] (four)-- (two);
\draw [->](three)-- (five);
\draw [->](four)--(six);
\draw [->](seven)-- (five);
\draw [->](eight)--(six);
\end{tikzpicture}
\end{equation*}
where $i+j \equiv n\ (\mathrm{mod}\ N)$. Similarly to the above all the non-identity morphisms are of the form $(1,l_i)$ or $(l_i,1)$ and $(1,k_i)$ or $(k_i,1)$ for any
$i\in \bbZ/N\bbZ$.

\item \label{betan}
Next is the case $\pr/\beta_n$. Let again $n\in \bbZ/N\bbZ$ but now we consider the slice category $\pr/\beta_n$. Notice that it is
\begin{equation*}
\begin{tikzcd}
\ldots &  & (\beta_{i-1},\beta_{j+1}) &  & (\beta_i,\beta_j) &  & (\beta_{i+1},\beta_{j-1}) &  & \ldots
\end{tikzcd}
\end{equation*}
in which $i+j \equiv n\ (\mathrm{mod}\ N)$. In other words, it is a discrete category. This means that 
\[E_{\beta_n}=\hocolim_{\pr/\beta_n}(X\wedge^{\bbL}Y)\cong \bigoplus_{i+j=n}\Xb{i}\wedge^{\bbL}\Yb{j}.   \]
This is the only case that we can be explicit about the values of the homotopy left Kan extension $E=\bbL\pr_!(X\wedge^{\bbL}Y)$.

\item \label{subposetzetan} 
Our last example is a particular subposet of $\pr/\zeta_n$ and it is not strictly speaking a slice of any value. However it will be very useful for us is the following. Consider the following subposet $J_n\subseteq \pro/\zeta_n$ defined as follows
\begin{equation*}
\begin{tikzpicture}[scale=1.7]
\node (zero) at (0,1) {$(\zeta_i,\zeta_j)$};
\node (one) at (1,0) {$(\beta_i,\beta_{j-1})$};
\node (two) at (2,1) {$(\zeta_{i+1},\zeta_{i-1})$};
\node (three)at ( 3,0) {$\ldots$};
\node (four) at (-1,0) {$(\beta_{i-1},\beta_j)$};
\node (five) at (-2,1) {$(\zeta_{i-1},\zeta_{i+1})$};
\node (six) at (-3,0) {$\ldots$};N
\draw [->] (one)-- (zero);
\draw [->] (one)-- (two);
\draw [->] (three)-- (two);
\draw [->] (four)-- (zero);
\draw [->] (four)-- (five);
\draw [->] (six)-- (five);
\end{tikzpicture}
\end{equation*}
where $i+j\equiv n\  (\mathrm{mod}\ N)$. In this poset, the non-identity morphisms are of the form $(k_i,l_i)$ or $(l_i,k_i)$, unlike the examples above where one arrow was always the identity arrow. 
\end{enumerate}

\begin{remark}\label{thetanisrightadjoint}
Now let $\theta\colon J_n \to \pr/\zeta_n$ denote the inclusion of the subposet defined in (\ref{subposetzetan}) into the poset in (\ref{zetan}). We will define a map of posets
\begin{equation*}
L\colon \pr/\zeta_n\lra J_n
\end{equation*}
where it suffices to define it for the part of the poset visible in (\ref{zetan}) as the rest can be defined analogously. The map $L$ is given by
\begin{align*}
L\colon \pr/\zeta_n  &\lra J_n  \\ 
(\beta_{i+1},\beta_j) &\mapsto (\beta_{i+1},\beta_j) \\
(\beta_i,\beta_{j+1}) &\mapsto (\beta_i,\beta_{j+1}) \\ 
  \mbox{else} \ \ &\mapsto (\zeta_i,\zeta_j).
 \end{align*}
We note that $L$ left adjoint to $\theta$- this can quickly be verified straight from the definition as the morphism sets in either poset are either empty or consist of exactly one element.
As a consequence, since the inclusion map $\theta\colon J_n\lra \pro/\zeta_n$ is a right adjoint, it is homotopy final, \ie, for any 
$F\in \Ho(\cM^{\pr/\zeta_n})$ we have
\[\hocolim_{J_n}\theta^*(F)\cong \hocolim_{\pr/\zeta_n}F  .\]
In other words, the value $E_{\zeta_n}$ in (\ref{value1}) can be calculated as
\begin{equation}\label{Ezetansecond}
\Ez{n}\cong\hocolim_{\pr/\zeta_n}(X\wedge^{\bbL} Y) \cong \hocolim_{J_n}\theta^*(X\wedge^{\bbL} Y).
\end{equation}
We discuss homotopy finality in more detail in Section \ref{sec:hocolimcalcs}, see Definition \ref{def:homotopyfinal}.
\end{remark}

Given any of subposet of $\cC_N\times \cC_N$, \eg, $\prog{n}$ from Example (\ref{gamman}), we can define the restriction of the external smash product $X\wedge Y\in \cM^{\cC_N\times \cC_N} $ to $\prog{n}$ by 
taking the pullback along the inclusion $\nu\colon \prog{n}\lra \cC_N\times \cC_N $, that is, 
\[\nu^*\colon \cM^{\cC_N\times \cC_N}\lra \cM^{\prog{n}}.\]
Notice that $\nu^*$ preserves weak equivalences so it induces a functor on homotopy categories
\[ \nu^*\colon \Ho(\cM^{\cC_N\times \cC_N})\lra \Ho(\cM^{\prog{n}}).\]
Moreover, we have maps between the subposets of $\cC_N\times \cC_N$. The morphisms $\gamma_n\rightarrow \zeta_n$ and $\gamma_{n+1}\rightarrow \zeta_n$ induce maps of posets
\begin{align}\label{psiphi}
\psi\colon\pr/\gamma_n &\lra \pr/\zeta_n  \\
\phi\colon\pr/\gamma_{n-1}&\lra \pr/\zeta_n \nonumber
\end{align}
which in turn also induce pullback functors on the homotopy categories, that is,
\[\phi^*\colon \Ho( \cM^{\proz{n}})\lra \Ho( \cM^{\prog{n-1}}), \ \ \text{and}\ \ \psi^*\colon \Ho( \cM^{\proz{n}})\lra \Ho( \cM^{\prog{n}}).    \]
We conclude this section with a convention.
\begin{conv}
Because of the above, we will commit an abuse of notation and instead of writing, for example,
\begin{equation*}
\phi^*(X\wedge^{\bbL}Y)\in \Ho(\cM^{\pr/\gamma_n})
\end{equation*} 
we will simply write 
\begin{equation*}
X\wedge^{\bbL}Y \in \Ho(\cM^{\pr/\gamma_n}),
\end{equation*} 
with the understanding that this diagram was given by a composition of restriction functors
\[ \Ho(\cM^{\cC_N\times \cC_N}) \xrightarrow{\pi^*}  \Ho(\cM^{\pr/\zeta_n}) \xrightarrow{\phi^*}  \Ho(\cM^{\pr/\gamma_n})   \]
unless we need the extra notation for clarification. 
\end{conv}

\begin{remark}
Consider a diagram $F\in \Ho (\cM^{\cC_N\times \cC_N})$. By Convention \ref{homotopycategoryofM}, we can assume that $F$ is a projective cofibrant object, so in particular, it is objectwise cofibrant. The 
external smash product 
\[-\wedge -\colon  \cM^{\cC_N}\times  \cM^{\cC_N}\lra  \cM^{\cC_N\times \cC_N} \]
as defined in \eqref{CNobjectwisesmash}
is Quillen bifunctor, so in particular it preserves cofibrant objects. 
This implies that $X\wedge^{\bbL} Y$ is cofibrant in $\cM^{\cC_N\times \cC_N}$, so in particular it is objectwise cofibrant. Now, for any subposet 
$$\iota\colon J\hookrightarrow \cC_N\times \cC_N,$$ \eg, any of the slice categories of the projection functor $\pr$ \eqref{definitionpr}, we have the pullback functor
\[\iota^*\colon \cM^{\cC_N\times \cC_N} \lra \cM^{J}.\]
This functor is not necessarily a left Quillen functor with respect the projective model structures, see Proposition \ref{modelondiagrams}. However, the diagram $\iota^*(X\wedge^{\bbL} Y)$, is objectwise cofibrant, 
which means that the geometric realization of the simplicial replacement still models the homotopy colimit of the diagram $\iota^*(X\wedge^{\bbL} Y)$. In particular, the skeletal filtration of all the restrictions 
is always Reedy cofibrant, see Lemma \ref{objectwisecofibrantreedy}.
\end{remark}

\subsection{Spectral Sequence Calculations}\label{spectralsequence}
The main result of this subsection is that given crowned diagrams $X,Y\in \cL$ that for
satisfying a simple condition, the diagram $i^*E=i^*\bbL\pr_!(X\wedge^{\bbL} Y)$ is also in the subcategory $\cL$, i.e. the objects $\Xb{i}$ and $\Xz{i}$ are cofibrant in $\cM$,
the objects $F_*(\Xb{i})$ and $F_*(\Xz{i})$ are in $\cB[i]$, and
the map $$\lambda^{(i)}\colon F_*(\Xb{i})\to F_*(\Xz{i})$$ is a monomorphism for any $i\in \bbZ/N\bbZ$, see Definition \ref{def:L}.
Essentially, this condition is that for the given homological functor $F_*\colon \Ho(\cM)\to \cA$, either the crowned diagram $X$ or $Y$ is objectwise projective.

\begin{prop}\label{propA}
Let $X, Y \in \cL$ such that $F_*(X_{{\alpha}_n}), F_*(Y_{{\alpha}_n})\in  \cAp$ for any $n\in \bbZ/N\bbZ$ and any $\alpha\in \left\{\beta,\zeta\right\}$. Consider the homotopy left Kan 
extension $E=\bbL\pr_!(X\wedge^{\bbL}Y)\in \Ho(\cM^{\cD_N})$ of 
\begin{equation*}
X\wedge^{\bbL}Y\in \Ho(\cM^{\cC_N\times \cC_N})
\end{equation*} along 
\begin{displaymath}
\pro\colon \cC_N\times \cC_N\lra \cD_N
\end{displaymath}
with the values and morphisms given in (\ref{value1})-(\ref{value3}) and (\ref{gammanzetan}), (\ref{gammanonezetan}) respectively.
\begin{equation*}
\xymatrix{E_{\zeta_0}  &  E_{\zeta_1}  & \ldots & E_{\zeta_{N-1}}   \\ 
E_{\gamma_0} \ar[u] \ar[ur] & \ar[u]  \ar[ur] E_{\gamma_1}   & \ar[ur] \ldots & \ar[u] \ar[ulll] E_{\gamma_{N-1}}\\
   E_{\beta_0} \ar[u] \ar[ur] & \ar[u]  \ar[ur] E_{\beta_1}   & \ar[ur] \ldots & \ar[u] \ar[ulll] E_{\beta_{N-1}}    }
\end{equation*}
Then, for any $n\in \bbZ/N\bbZ$ and any $\alpha\in \left\{\beta,\zeta\right\}$ we have  $F_*(E_{\alpha_n})\in \cB[n]$ and the morphisms 
\begin{displaymath}
F_*(E_{\gamma_n})\lra F_*(E_{\zeta_n})
\end{displaymath}
 induced by $E_{\gamma_n}\lra E_{\zeta_n}$ are monomorphisms.
\end{prop}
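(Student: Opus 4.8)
The plan is to compute $F_*$ of each vertex of $E$ by feeding the slice categories from the previous subsection into the homotopy-colimit spectral sequence \eqref{sshomotopycolimit} applied to the homological functor $F_*$. Since $X\wedge^{\bbL} Y$ is objectwise cofibrant, each vertex $E_{\alpha_n}=\hocolim_{\pr/\alpha_n}(X\wedge^{\bbL}Y)$ is a genuine homotopy colimit over the indicated slice poset, so we get spectral sequences $E^2_{pq}=H_p(\pr/\alpha_n; F_q(X\wedge^{\bbL}Y))\Rightarrow F_{p+q}(E_{\alpha_n})$. The input $F_q(X\wedge^{\bbL}Y)$ evaluated at a vertex $(\alpha_i,\alpha'_j)$ of $\cC_N\times\cC_N$ is $F_q(X_{\alpha_i}\wedge^{\bbL}Y_{\alpha'_j})$, and here the hypothesis enters: since $F_*(X_{\alpha_i})$ or $F_*(Y_{\alpha'_j})$ lies in $\cAp$, the Künneth morphism $\kappa$ is an isomorphism $F_*(X_{\alpha_i})\otimes F_*(Y_{\alpha'_j})\xrightarrow{\ \cong\ }F_*(X_{\alpha_i}\wedge^{\bbL}Y_{\alpha'_j})$ (no higher Tor terms appear because one factor is projective, using that $\cA$ is hereditary so $\otimes$ has at most one derived functor). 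Moreover, by membership in $\cL$, $F_*(X_{\alpha_i})\in\cB[i]$ and $F_*(Y_{\alpha'_j})\in\cB[j]$, so the coefficient functor takes values in $\cB[i+j]$ on the vertex indexed by $(i,j)$; since every vertex of the slice $\pr/\alpha_n$ has $i+j\equiv n$, the whole coefficient diagram lands in $\cB[n]$, hence $H_p(\pr/\alpha_n;F_q(X\wedge^{\bbL}Y))\in\cB[n]$ too. This already gives $F_*(E_{\alpha_n})\in\cB[n]$, and in particular $F_*(E_{\alpha_n})\in\cB[n]\subseteq\cA$ has the required grading — which is the first assertion.

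Next I would sharpen this to control the spectral sequence. Because $\cA$ is hereditary and the coefficient diagram takes values in projectives of $\cB[n]$ (each $F_q(X_{\alpha_i}\wedge^{\bbL}Y_{\alpha'_j})\cong F_q(X_{\alpha_i})\otimes F_q(Y_{\alpha'_j})$ is a tensor product with a projective factor, hence projective), the only subtlety is the higher category-homology $H_p$ for $p\geq 1$. For the discrete poset $\pr/\beta_n$ from case (iii), the simplicial replacement is a disjoint sum of constant diagrams, $H_p$ vanishes for $p>0$, and we recover the explicit formula $F_*(E_{\beta_n})\cong\bigoplus_{i+j=n}F_*(X_{\beta_i})\otimes F_*(Y_{\beta_j})$. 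For $\pr/\gamma_n$ and $\pr/\zeta_n$ I would use the Bousfield–Kan/category-homology complex: these posets have contractible nerve (they are ``zig-zag'' shaped and filtered-like — more precisely $\pr/\gamma_n$ and $J_n\simeq\pr/\zeta_n$ deformation retract onto the relevant $\beta$-vertices, using the adjunction $L\dashv\theta$ of Remark~\ref{thetanisrightadjoint} for the $\zeta$ case), so $H_0$ is the relevant colimit of the coefficient diagram and $H_p=0$ for $p>0$; hence the spectral sequence collapses and $F_*(E_{\alpha_n})=H_0(\pr/\alpha_n;F_*(X\wedge^{\bbL}Y))=\colim_{\pr/\alpha_n}(F_*(X)\otimes F_*(Y))$. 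Then $F_*(E_{\alpha_n})$ is an honest colimit over an explicit finite diagram of projective objects.

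Finally, for the monomorphism claim, I would identify the map $F_*(E_{\gamma_n})\to F_*(E_{\zeta_n})$ with the map on $H_0$ induced by the poset map $\psi\colon\pr/\gamma_n\to\pr/\zeta_n$ of \eqref{psiphi}. Using the computations above — $F_*(E_{\gamma_n})=\colim_{\pr/\gamma_n}(F_*X\otimes F_*Y)$ and, via $J_n$, $F_*(E_{\zeta_n})=\colim_{J_n}(F_*X\otimes F_*Y)$ — this becomes a completely explicit map of finite colimits of projective $\cB[n]$-objects. One reads off from the shape of $\pr/\gamma_n$ (cone-like, with cocone point $(\beta_i,\beta_j)$) that the colimit is $\bigoplus_{i+j=n}F_*(X_{\beta_i})\otimes F_*(Y_{\beta_j})$, while the $J_n$-colimit computes a corresponding sum of pushouts $F_*(X_{\zeta_i})\otimes F_*(Y_{\beta_{j-1}})\oplus_{F_*(X_{\beta_{i-1}})\otimes F_*(Y_{\beta_j})}\cdots$ built from the monomorphisms $\lambda^{(i)}_X\otimes 1$ and $1\otimes\lambda^{(j)}_Y$ (which are monomorphisms because $X,Y\in\cL$ and tensoring with a projective — equivalently, a flat — object is exact); the induced map $\psi$ is then visibly the inclusion of the ``corner'' summand into each pushout, hence a monomorphism.

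I expect the main obstacle to be the vanishing $H_p(\pr/\gamma_n;-)=0=H_p(\pr/\zeta_n;-)$ for $p>0$ together with the precise bookkeeping identifying the $H_0$'s with the claimed direct sums of (pushouts of) tensor products; establishing the retraction/contractibility of these zig-zag slice posets cleanly — rather than resolving the spectral sequence by brute force — is where care is needed, and the adjunction $L\dashv\theta$ of Remark~\ref{thetanisrightadjoint} is the key device that makes the $\zeta_n$ case tractable.
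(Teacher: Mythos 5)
Your overall strategy---feeding the slice categories into the homotopy colimit spectral sequence \eqref{sshomotopycolimit} for $F_*$ and using the K\"unneth isomorphism supplied by the projectivity hypothesis---is the same as the paper's, and your treatment of the discrete slice $\pr/\beta_n$ is fine. But the proposal fails at exactly the step you flag as the main obstacle: the claims that $\pr/\gamma_n$ and $\pr/\zeta_n$ have contractible nerves and that consequently $H_p(\pr/\alpha_n;F_q(X\wedge^{\bbL}Y))=0$ for $p>0$, so that $F_*(E_{\alpha_n})$ is just a colimit. Both claims are false. Because the indices live in $\bbZ/N\bbZ$, the slice $\pr/\gamma_n$ is a \emph{closed} zig-zag (a $4N$-cycle) and $J_n$ is a closed zig-zag of length $2N$; their nerves are circles, not points, and the adjunction $L\dashv\theta$ of Remark \ref{thetanisrightadjoint} only identifies $\pr/\zeta_n$ with $J_n$, it does not contract it. Relatedly, your premise that every vertex of the slice has degree sum congruent to $n$ is wrong: $\pr/\gamma_n$ and $\pr/\zeta_n$ contain the vertices $(\beta_{i-1},\beta_j)$ and $(\beta_i,\beta_{j-1})$ of degree sum $n-1$, whose coefficients lie in $\cB[n-1]$, and these are precisely what produce a nonzero $H_1$: the paper computes $H_1(\pr/\gamma_n;F_{n-1}(X\wedge^{\bbL}Y))\cong\bigoplus B^{(i)}\otimes\widetilde{B}^{(j)}\neq 0$, and similarly for $J_n$. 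The spectral sequence therefore does not collapse onto $H_0$; it is concentrated in filtrations $0$ and $1$, and $F_n(E_{\gamma_n})$ and $F_n(E_{\zeta_n})$ are nontrivial extensions given by short exact sequences, not colimits.

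This error propagates into both conclusions. The grading claim $F_*(E_{\alpha_n})\in\cB[n]$ needs the bookkeeping of \emph{both} rows of the $E^2$-page (the filtration-$1$ term contributes with a shift), not just the observation about vertex degrees. And the identification $F_*(E_{\gamma_n})\cong\bigoplus_{i+j=n}F_*(\Xb{i})\otimes F_*(\Yb{j})$ is wrong even at the level of $H_0$: the vertices $(\beta_i,\beta_j)$ are sources of the zig-zag, not cocone points, and $H_0(\pr/\gamma_n;F_n)$ is a direct sum of pushouts along $\lambda_i\otimes 1$ and $1\otimes\widetilde{\lambda}_j$, while $H_0(J_n;F_n)$ is $\bigoplus Z^{(i)}\otimes\widetilde{Z}^{(j)}$ (you have these two roles essentially interchanged). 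Consequently the monomorphism cannot be read off as an ``inclusion of a corner summand''. What is actually required is: the map of short exact sequences induced by $\psi$, in which the filtration-$1$ quotients agree and the filtration-$0$ map is the direct sum of the pushout-product maps $\lambda_i\boxprod\widetilde{\lambda}_j$; the fact that these pushout-products are monomorphisms, which is Lemma \ref{ppinjective} and is where heredity of $\cA$ enters; and the five lemma. None of these appear in your proposal, and the route you propose instead cannot be repaired, because the objects it would compute are not the right ones.
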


\begin{cor}\label{pullbackviaiisine}
Let $X,Y$ be crowned diagrams satisfying the hypothesis of Proposition \ref{propA}. The top two rows of the diagram $E=\bbL\pr_!(X\wedge^{\bbL}Y)$ form an object in $\cL$, that is, the diagram $i^*E\in \cL$.
\end{cor}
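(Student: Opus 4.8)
The plan is to obtain Corollary~\ref{pullbackviaiisine} directly from Proposition~\ref{propA}, and to prove the latter by a collapsing-spectral-sequence argument. Since the functor $i$ of~\eqref{definitioni} sends $\beta_n\mapsto\gamma_n$ and $\zeta_n\mapsto\zeta_n$, the diagram $i^*E$ is the crowned diagram with $(i^*E)_{\beta_n}=E_{\gamma_n}$, $(i^*E)_{\zeta_n}=E_{\zeta_n}$ and structure map $E_{\gamma_n}\to E_{\zeta_n}$, so for the Corollary I would simply check the three conditions of Definition~\ref{def:L}: condition~(i) is automatic, since by Convention~\ref{homotopycategoryofM} the object $E\in\Ho(\cM^{\cD_N})$ is represented by a projectively, hence objectwise, cofibrant diagram and $i^*$ preserves objectwise cofibrancy; conditions~(ii) and~(iii) are exactly the two conclusions of Proposition~\ref{propA} transported along $i$. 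Everything therefore reduces to Proposition~\ref{propA}.

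For that Proposition, by Proposition~\ref{holanformula} we have $E_{\gamma_n}=\hocolim_{\pr/\gamma_n}(X\wedge^{\bbL}Y)$ and $E_{\zeta_n}=\hocolim_{\pr/\zeta_n}(X\wedge^{\bbL}Y)$, while $E_{\beta_n}=\bigoplus_{i+j\equiv n}\Xb{i}\wedge^{\bbL}\Yb{j}$ is already explicit and poses no problem. Under the hypothesis $F_*(X_{\alpha_i}),F_*(Y_{\alpha'_j})\in\cAp$ the K\"unneth morphism of the homological functor $F_*$ is an isomorphism, so the coefficient diagram $F_*(X\wedge^{\bbL}Y)$ has value $F_*(X_{\alpha_i})\otimes F_*(Y_{\alpha'_j})\in\cB[i]\otimes\cB[j]\subseteq\cB[i+j]$ at the vertex $(\alpha_i,\alpha'_j)$, and projectivity also supplies the flatness used below. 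I would then feed this into the spectral sequence~\eqref{sshomotopycolimit}, reducing the claim to computing the functor homologies $H_*(\pr/\gamma_n;F_*(X\wedge^{\bbL}Y))$ and $H_*(\pr/\zeta_n;F_*(X\wedge^{\bbL}Y))$; for the second I would first replace $\pr/\zeta_n$ by the homotopy-terminal subposet $J_n$ of Remark~\ref{thetanisrightadjoint}.

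The crucial point is then to track which structure maps of these slices are killed by $F_*$. Every non-identity morphism of $J_n$ has a leg equal to some $k_i$, and $F_*(k_i)=0$ because it runs between the distinct summands $\cB[i-1]$ and $\cB[i]$ of $\cA$; hence $F_*(X\wedge^{\bbL}Y)|_{J_n}$ has all maps zero. A zero-map diagram on a poset whose vertices are all minimal or maximal splits over vertices: writing $j_x$ for the inclusion of a vertex $x$, it is the sum of the pieces $(j_w)_! F_*(\cdots)$ at the maximal vertices, contributing only in homological degree $0$, and the pieces $(j_v)_* F_*(\cdots)$ at the minimal vertices; in $J_n$ (and likewise in the index-sum-$\equiv(n-1)$ part of the coefficient diagram over $\pr/\gamma_n$, which also has zero maps) each minimal vertex maps to exactly two incomparable maximal vertices, so each such piece contributes only in homological degree $1$. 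The remaining part of the coefficient diagram over $\pr/\gamma_n$ is supported on the index-sum-$\equiv n$ vertices, where each minimal vertex maps to its two maximal $\gamma_n$-targets by maps of the form $F_*(l_a)\otimes\mathrm{id}$ or $\mathrm{id}\otimes F_*(l_b)$, which are monomorphisms since $F_*(l_i)$ is monic for $X,Y\in\cL$ and the relevant tensor factors are flat; such a vertex together with its two targets forms a span, contributing the cokernel of that monomorphism in degree $0$ and nothing in degree $1$. In all cases homology vanishes above degree $1$, so both spectral sequences collapse at $E^2$ onto the columns $p=0,1$; the column-$p$ term enters $F_*(E_{\gamma_n})$, resp.\ $F_*(E_{\zeta_n})$, shifted by $[p]$, and as $[1]$ restricts to an equivalence $\cB[n-1]\xrightarrow{\ \sim\ }\cB[n]$ and $\cB[n]$ is closed under extensions in $\cA$, this yields $F_*(E_{\gamma_n}),F_*(E_{\zeta_n})\in\cB[n]$.

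For the monomorphy statement I would trace the map $F_*(E_{\gamma_n})\to F_*(E_{\zeta_n})$ induced by $\psi$ of~\eqref{psiphi} (followed by $L\colon\pr/\zeta_n\to J_n$ of Remark~\ref{thetanisrightadjoint}) through the collapsed pages: it respects the two-step filtrations above, it is the evident identification of matching summands on the degree-$1$ parts, and on each degree-$0$ summand it is the canonical map from the pushout of $F_*(\Xb{a})\otimes F_*(\Yz{b})\leftarrow F_*(\Xb{a})\otimes F_*(\Yb{b})\rightarrow F_*(\Xz{a})\otimes F_*(\Yb{b})$ to $F_*(\Xz{a})\otimes F_*(\Yz{b})$, which is injective because flatness makes that square a pullback of monomorphisms; a filtered map injective on the associated graded is injective. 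I expect the main obstacle to be precisely this combinatorial analysis of the slice posets $\pr/\gamma_n$ and $\pr/\zeta_n$---locating their minimal and maximal vertices and pinning down the $F_*$-images of all their structure maps, and cutting $\pr/\zeta_n$ down to $J_n$ first---after which the homological input (collapse above degree $1$, the degree shift in~\eqref{sshomotopycolimit}, extension-closure of $\cB[n]$, and the flatness argument) is routine.
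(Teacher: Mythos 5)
Your deduction of the corollary from Proposition \ref{propA} is exactly the paper's: since $i$ sends $\beta_n\mapsto\gamma_n$ and $\zeta_n\mapsto\zeta_n$, conditions (ii) and (iii) of Definition \ref{def:L} for $i^*E$ are precisely the two conclusions of the proposition, and (i) is automatic from objectwise cofibrancy of a representing diagram. Your sketch of the proposition itself also follows the paper's route step for step: K\"unneth isomorphisms from the projectivity hypothesis, the spectral sequence \eqref{sshomotopycolimit} over $\pr/\gamma_n$ and over the homotopy-terminal subposet $J_n\subseteq \pr/\zeta_n$, the observation that $F_*$ kills every $k$-leg for degree reasons so that the coefficient diagrams decompose into zero-map pieces (contributing in homological degree $1$) and spans with monic legs (contributing their pushouts in degree $0$), concentration of $E^2$ in $p=0,1$, and a comparison of the resulting two-step filtrations in which the degree-$1$ parts match and the degree-$0$ part of the comparison map is $\bigoplus_{i+j=n}\lambda_i\boxprod\widetilde{\lambda}_j$. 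The paper phrases the degree-$0$/degree-$1$ bookkeeping via explicit simplicial replacements rather than your minimal/maximal-vertex decomposition, but the content is identical.

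The one step that fails as you justify it is the injectivity of $\lambda_i\boxprod\widetilde{\lambda}_j$. Flatness of the four corners does not make the relevant square a pullback: for monomorphisms $f\colon B\to Z$ and $g\colon \widetilde{B}\to\widetilde{Z}$ of flat objects, the square is the top part of the tensor product of the two-term flat complexes $[B\to Z]$ and $[\widetilde{B}\to\widetilde{Z}]$, and the kernel of $f\boxprod g$ is $\operatorname{Tor}_1(\coker f,\coker g)$, which need not vanish. Already $f=g=(2\bbZ\hookrightarrow\bbZ)$ over $\bbZ$ gives a pushout-product map with kernel $\bbZ/2$, even though all four corners are free. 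The paper does not argue via a pullback square; it isolates this claim as Lemma \ref{ppinjective} and proves it using the hereditary hypothesis on $\cA$ together with the projectivity of the objects involved. Since condition (iii) of Definition \ref{def:L} for $i^*E$ — the monomorphism statement that the corollary needs — rests entirely on this injectivity, you should route it through that lemma (or through an argument controlling the cokernels of the $\lambda$'s) rather than through flatness of $B^{(i)}, Z^{(i)}, \widetilde{B}^{(j)}, \widetilde{Z}^{(j)}$ alone.
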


By our assumption, for any $n\in\bbZ/N\bbZ$ and any $\alpha\in \left\{\beta,\zeta\right\}$ the objects $F_*(X_{{\alpha}_n})$ and  $F_*(Y_{{\alpha}_n})$ are projective in $\cA$. This ensures that 
there are natural K{\"u}nneth isomorphisms
\begin{equation}\label{Kunneth}
  	F_*\left(X_{{\alpha}_n}\wedge^{\bbL} Y_{{\alpha}_n}  \right) \cong F_*( X_{{\alpha}_n} ) \otimes F_*(Y_{{\alpha}_n}).
\end{equation}  
Since the values $E_{\zeta_n}, E_{\gamma_n}$ and $E_{\beta_n}$ are computed via homotopy colimits, we will use \eqref{sshomotopycolimit}, the spectral sequences converging to the 
homology of the homotopy colimit.
\begin{lemma}\label{sseqlemma}
There are spectral sequences
\begin{equation}\label{ssgamman} 
E^2_{pq}= H_p(\pr/\gamma_n; F_q(X\wedge^{\bbL}Y))\Rightarrow F_{p+q}\left(\hocolim_{\pr/\gamma_n}(X\wedge^{\bbL}Y)\right)\cong F_{p+q}(E_{\gamma_n})
\end{equation}
and
\begin{equation}\label{sszetan}
E^{\prime 2}_{pq}= H_p(\pr/\zeta_n; F_q(X\wedge^{\bbL}Y))\Rightarrow F_{p+q}\left(\hocolim_{\pr/\zeta_n}(X\wedge^{\bbL}Y) \right)\cong F_{p+q}(E_{\zeta_n})
\end{equation}
and natural morphisms of spectral sequences $f\colon\left\{E^2_{pq}\right\}\lra \left\{E^{\prime 2}_{pq}\right\}$ induced by the map in (\ref{gammanzetan}).
\end{lemma}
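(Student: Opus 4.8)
The plan is to obtain both spectral sequences as instances of the general homotopy-colimit spectral sequence \eqref{sshomotopycolimit}. First I would observe that $\pr/\gamma_n$ and $\pr/\zeta_n$ are subposets of the finite poset $\cC_N\times\cC_N$, hence are themselves finite posets and in particular direct categories, so that \eqref{sshomotopycolimit} is available for them. By the final remark of the previous subsection (the one noting that every restriction of $X\wedge^{\bbL}Y$ to a slice category is objectwise cofibrant), the diagram $X\wedge^{\bbL}Y$ is cofibrant, hence objectwise cofibrant, in $\cM^{\cC_N\times\cC_N}$, and its restriction along the inclusion of either slice category remains objectwise cofibrant; by Lemma \ref{objectwisecofibrantreedy} and Theorem \ref{realisquillen} the geometric realization of the simplicial replacement of that restriction therefore computes the homotopy colimit over the slice. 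Feeding $I=\pr/\gamma_n$ (respectively $I=\pr/\zeta_n$), the restricted diagram, and the homological functor $F_*$ into \eqref{sshomotopycolimit} then yields \eqref{ssgamman} and \eqref{sszetan}; since both posets are finite the skeletal filtration is finite, so these spectral sequences converge strongly.

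For the comparison morphism $f$ I would use that \eqref{sshomotopycolimit} is built, as recalled just after its statement, from the skeletal filtration of the simplicial replacement, a construction which is functorial in the pair consisting of a direct category and a diagram on it. The map $\psi\colon\pr/\gamma_n\to\pr/\zeta_n$ of \eqref{psiphi} is the inclusion of slice categories induced by $\gamma_n\leq\zeta_n$ in $\cD_N$, and it is compatible with the two projections down to $\cC_N\times\cC_N$, so $\psi^{*}$ applied to the restriction of $X\wedge^{\bbL}Y$ to $\pr/\zeta_n$ is precisely its restriction to $\pr/\gamma_n$. Applying the nerve and Definition \ref{simprep}, $\psi$ induces a map of simplicial objects
\[ \srep\bigl(X\wedge^{\bbL}Y|_{\pr/\gamma_n}\bigr)\lra\srep\bigl(X\wedge^{\bbL}Y|_{\pr/\zeta_n}\bigr), \]
which respects skeleta and hence induces a morphism of the associated exact couples, that is, the morphism of spectral sequences $f$. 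On the $E^{2}$-page, under the identification $E^{2}_{pq}=H_{p}(-;F_{q}(-))$, $f$ is the map $H_{p}(\pr/\gamma_{n};F_{q}(X\wedge^{\bbL}Y))\to H_{p}(\pr/\zeta_{n};F_{q}(X\wedge^{\bbL}Y))$ induced by the nerve of $\psi$ on associated complexes; on the abutment it is $F_{p+q}$ of the induced map of geometric realizations, which by Proposition \ref{holanformula} is $F_{p+q}$ of the edge $E_{\gamma_n}\to E_{\zeta_n}$ of \eqref{gammanzetan}. Naturality of $f$ in $X$ and $Y$ is automatic, since the external smash product, the restriction functors, the simplicial replacement and $F_*$ are all functorial.

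I do not expect a genuine obstacle here: the content is merely the covariant functoriality of \eqref{sshomotopycolimit} along a map of direct categories equipped with a compatible diagram, which is transparent from the functoriality of the nerve, the simplicial replacement, the skeletal filtration and the homological functor. The one point that needs a little care is the identification of the map on abutments with the Kan-extension edge \eqref{gammanzetan} rather than with some other comparison map, and this is exactly the content of Proposition \ref{holanformula}; if one prefers not to spell out this functoriality by hand it can instead be quoted from \cite{RI20}. Finally, running the same argument with the map $\phi\colon\pr/\gamma_{n-1}\to\pr/\zeta_n$ of \eqref{psiphi} produces the analogous comparison for the edge \eqref{gammanonezetan}, which will also be needed in the sequel.
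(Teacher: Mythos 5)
Your proposal is correct and follows exactly the route the paper intends: the paper states Lemma \ref{sseqlemma} without a separate proof, treating it as an immediate instance of the homotopy-colimit spectral sequence \eqref{sshomotopycolimit} quoted from \cite[16.3.1]{RI20}, applied to the finite (hence direct) slice posets with the objectwise cofibrant restricted diagram, together with the functoriality of the simplicial replacement and skeletal filtration along $\psi$ and the identification of the abutment map with the Kan-extension edge via Proposition \ref{holanformula}. Your write-up simply makes explicit the details the paper leaves implicit, and does so correctly.
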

We will now begin the proof of Proposition \ref{propA}.

\begin{proof}
Our claim is that $F_*(E_{\gamma_n})\lra F_*(E_{\zeta_n})$ is a monomorphism, where $$E=\bbL\pr_!(X\wedge^{\bbL}Y)\in \Ho(\cM^{\cD_N})$$ as before and $F_*\colon \Ho(\cM)\to \cA$ is our homological functor. 
To obtain information on $F_*(E_{\gamma_n})$ and  $F_*(E_{\zeta_n})$, we will start by working out the spectral sequence (\ref{ssgamman}), which we explained is a special case of the spectral sequence (\ref{sshomotopycolimit}). 
The proof of the proposition is divided into three parts:
\begin{itemize}
\item calculating the $E_2$-term $H_p(\pr/\gamma_n; F_q(X\wedge^{\bbL}Y))$,
\item calculating the $E_2$-term $H_p(\pr/\zeta_n; F_q(X\wedge^{\bbL}Y))$,
\item showing that the induced map of spectral sequences gives the desired isomorphism.
\end{itemize}

\bigskip
\noindent\fbox{%
    \parbox{15em}{%
       {\bf Step 1:} $H_p(\pr/\gamma_n; F_q(X\wedge^{\bbL}Y))$
    }%
}

\bigskip
We will use the simplicial replacement techniques explained in Section \ref{sec:homologyofacategory}.
Recall the poset $\pr/\gamma_n$ which looks as follows.
\begin{equation*}
\begin{tikzpicture}[scale=1.5]
\node (zero)at (0,0) {$(\beta_i,\beta_j)$};
\node (one) at (1,1) {$(\beta_i,\zeta_j)$};
\node (two) at (-1,1){$(\zeta_i,\beta_j)$};
\node (three) at (2,0){$(\beta_i,\beta_{j-1})$};
\node (four)  at (-2,0){$(\beta_{i-1},\beta_j)$};
\node (five) at  (3,1){$(\zeta_{i+1},\beta_{j-1})$} ;
\node (six) at   (-3,1) {$(\beta_{i-1},\zeta_{i+1})$};
\node (seven) at  (4,0) {$\ldots$};
\node (eight) at (-4,0) {$\ldots$};
\draw [->] (zero)-- (one) ;
\draw [->] (zero)-- (two);
\draw [->](three)-- (one);
\draw[->] (four)-- (two);
\draw [->](three)-- (five);
\draw [->](four)--(six);
\draw [->](seven)-- (five);
\draw [->](eight)--(six);
\end{tikzpicture}
\end{equation*}
Here, $i+j=n  \ \mathrm{mod}\ N$, and so the functor $X\wedge^{\bbL}Y \in \Ho(\cM^{\pr/\gamma_n}) $ looks as follows.
\begin{equation*}
\begin{tikzpicture}[scale=1.5]
\node (zero)at (0,0) {$X_{\beta_i}\wedge Y_{\beta_j}$};
\node (one) at (1,1) {$X_{\beta_i}\wedge Y_{\zeta_j}$};
\node (two) at (-1,1){$X_{\zeta_i}\wedge Y_{\beta_j}$};
\node (three) at (2,0){$X_{\beta_i}\wedge Y_{\beta_{j-1}}$};
\node (four)  at (-2,0){$X_{\beta_{i-1}}\wedge Y_{\beta_j}$};
\node (five) at  (3,1){$X_{\zeta_{i+1}}\wedge Y_{\beta_{j-1}}$} ;
\node (six) at   (-3,1) {$X_{\beta_{i-1}}\wedge Y_{\zeta_{i+1}}$};
\node (seven) at  (4,0) {$\ldots$};
\node (eight) at (-4,0) {$\ldots$};
\draw [->](zero)-- (one) ;
\draw [->](zero)-- (two);
\draw [->](three)-- (one);
\draw [->] (four)-- (two);
\draw [->] (three)-- (five);
\draw [->](four)--(six);
\draw [->] (seven)--(five);
\draw [->] (eight)--(six);
\end{tikzpicture}
\end{equation*}
Our goal is to compute 
\begin{displaymath}
H_p(\pr/\gamma_n;F_q(X\wedge^{\bbL} Y))\ \ \ \  \mbox{ for all $p\geq 0$ and all $q\in\bbZ$},
\end{displaymath}
which are the $E^2$-terms of the spectral sequence (\ref{ssgamman}). In order to do so, we apply 
the homological functor $F_n(-)$ to the previous diagram to get the diagram $F_n(X\wedge^{\bbL} Y)\in \cA^{\pr/\gamma_n}$ which, by \eqref{Kunneth} is the following.
 \begin{equation}\label{engamman} 
\begin{tikzpicture}[scale=1.5][baseline= (current  bounding  box.north)]               
\node (zero)at (0,0) {$B^{(i)}\otimes \widetilde{B}^{(j)}$};
\node (one) at (1,1) {$B^{(i)}\otimes \widetilde{Z}^{(j)}$};
\node (two) at (-1,1){$Z^{(i)}\otimes \widetilde{B}^{(j)}$};
\node (three) at (2,0){$0$};
\node (four)  at (-2,0){$0$};
\node (five) at  (3,1){$B^{(i+1)}\otimes \widetilde{Z}^{(j-1)}$} ;
\node (six) at   (-3,1) {$B^{(i-1)}\otimes \widetilde{Z}^{(j+1)}$};
\node (seven) at  (4,0) {$\ldots$};
\node (eight) at (-4,0) {$\ldots$};
\draw [->](zero)-- (one) ;
\draw [->] (zero)-- (two);
\draw [->] (three)-- (one);
\draw [->] (four)-- (two);
\draw [->] (three)-- (five);
\draw [->] (four)--(six);
\draw [<-] (six)--(eight);
\draw  [<-](five)--(seven);
\end{tikzpicture}
\end{equation}
We will write 
\begin{align}
f_{ij}=\lambda_i\otimes 1  \colon B^{(i)}\otimes\widetilde{B}^{(j)}&\lra Z^{(i)}\otimes\widetilde{B}^{(j)} \label{fij}\\ 
g_{ij}=1\otimes \tilde{\lambda}_j    \colon B^{(i)}\otimes\widetilde{B}^{(j)}&\lra B^{(i)}\otimes \widetilde{Z}^{(j)} \label{gij}
\end{align} 
to distinguish, for labeling purposes, the two different morphisms in the simplicial replacement below. Note that since $B^i$ and $\widetilde{B}^j$ and projective in $\cA$, by our convention they are %write the convention
automatically flat, hence the morphisms \eqref{fij} and \eqref{gij} are monomorphisms. 

Next, we consider the simplicial replacement of the diagram  $F_n(X\wedge^{\bbL}Y)\in \cA^{\pr/\gamma_n}$, that 
is $\srep\left(F_n(X\wedge^{\bbL}Y)\right)\in \cA^{\Dopp}$. Following Definition \ref{simprep} we have that
\begin{align*} 
\displaystyle{\srep\left(F(X\wedge^LY)\right)_0}&= \bigoplus_{i+j=n}\left( (B^{(i)}\otimes\widetilde{B}^{(j)}) \oplus (Z^{(i)}\otimes\widetilde{B}^{(j)})\oplus (B^{(i)}\otimes \widetilde{Z}^{(j)}) \right)\\
\srep\left(F_n(X\wedge^{\bbL}Y)\right)_1&= \bigoplus_{i+j=n} \left( (B^{(i)}\otimes\widetilde{B}^{(j)})_{f_{ij}} \oplus  (B^{(i)}\otimes\widetilde{B}^{(j)})_{g_{ij}}\right),
\end{align*}
with face maps given by ``source'' and ``target'' respectively.  Because of the shape of the poset $\prog{n}$, for all $m \geq 2$ the simplices $\srep\left(F_n(X\wedge^{\bbL}Y)\right)_m$ 
consist solely of degenerate simplices. 

Now we consider the associated complex $C_*(F_n(X\wedge^{\bbL}Y))$
of this simplicial complex, see Definition \ref{associatedcomplex}. We briefly explain the differential of the complex $C_*(E(1)_{-n}(X\wedge^{\bbL}Y))$, namely the 
map
\[d=d_0-d_1\colon C_1(F_n(X\wedge^{\bbL}Y))\lra C_0(F_n(X\wedge^{\bbL}Y)) .\]
Notice from \eqref{engamman}, we can consider the simpler case where the diagram looks as follows.
\[
\begin{tikzcd}
 Z^{(i)}\otimes \tB^{(j)} &                  &  B^{(i)}\ox \tZ^{(j)} \\
                  &B^{(i)}\otimes \tB^{(j)}  \arrow[ur,"g_{ij}", swap] \arrow[ul,"f_{ij}"] &
\end{tikzcd}
\] 
Then, the differential of the associated complex of the simplicial replacement of this diagram is
\begin{align*} 
d_{ij}=d_0-d_1\colon (B^{(i)}\otimes\widetilde{B}^{(j)}) \oplus  (B^{(i)}\otimes\widetilde{B}^{(j)}) &\lra (B^{(i)}\otimes\widetilde{B}^{(j)})\oplus  (Z^{(i)}\otimes\widetilde{B}^{(j)})\oplus (B^{(i)}\otimes \widetilde{Z}^{(j)})\\
 (x,y)  &\mapsto (x+y, -f_{ij}(x), -g_{ij}(y)).
\end{align*}
The $0$th homology of the complex is just the pushout $$ B^{(i)}\ox \tZ^{(j)} \coprod_{B^{(i)}\otimes \tB^{(j)} } Z^{(i)}\otimes \tB^{(i)} .$$ The first homology is the kernel of the differential $d_{ij}$. Since the maps
$f_{ij}$ and $g_{ij}$ are injective, this forces $d_{ij}(x,y)=0$ if and only if $x=y=0$, which implies that the first homology is trivial. 
It follows from the diagram \eqref{engamman} that the differential $d$ on the complex $C_*(F_n(X\wedge^{\bbL}Y))$ is the direct sum of the 
differentials $d_{ij}$ for $i+j=n$. Now that we know the differential of the complex $C_*(F_n(X\wedge^{\bbL}Y))$ we will compute its homology. It follows that %fix the ref to be in background
$H_0(\pr/\gamma_n;F_n(X\wedge^\bbL Y))$ is the 
colimit of the diagram $F_n(X\wedge^{\bbL} Y)$. By inspecting the diagram $F_n(X\wedge^{\bbL} Y)$ above we can see 
the colimit of the diagram is a direct sum (coproduct) of 
pushouts, that is,
\begin{align*}
H_0(\pr/\gamma_n;F_n(X\wedge^{\bbL} Y)) &=\colim_{\pr/\gamma_n}F_n(X\wedge^{\bbL} Y) \\
&=\bigoplus_{i+j=n}\left( Z^{(i)}\otimes \widetilde{B}^{(j)}\coprod_{B^{(i)}\otimes \widetilde{B}^{(j)}}B^{(i)}\otimes \widetilde{Z}^{(j)}\right).
\end{align*}
Similar to the simpler case, the first homology
\begin{displaymath}
H_1(\pr/\gamma_n; F_n(X\wedge^{\bbL}Y))
\end{displaymath} is the kernel of the differential
\begin{displaymath}
d_0-d_1\colon\srep(F_n(X\wedge^{\bbL}Y))_1\lra \srep(F_n(X\wedge^{\bbL}Y))_0.
\end{displaymath} 
Since it is a direct sum of the simpler differentials $d_{ij}$ as above, it follows that
\begin{displaymath}
H_1(\pr/\gamma_n;F_n(X\wedge^{\bbL}Y))=0.
\end{displaymath}
Of course, all the higher homologies 
\begin{displaymath}
H_q(\pr/\gamma_n;F_n(X\wedge^{\bbL}Y))
\end{displaymath}
vanish for all $q\geq 2$.

\vspace{3mm}
Next, we apply the homological functor $F_{n-1}(-)$ to the diagram $X\wedge^{\bbL}Y\in\Ho(\cM^{\pr/\gamma_n})$ and we have the diagram 
$F_{n-1}(X\wedge^{\bbL}Y)\in \cA^{\pr/\gamma_n}$ which looks as follows.
\begin{equation*}
\begin{tikzpicture}
\node (zero)at (0,0) {$0$};
\node (one) at (1,1) {$0$};
\node (two) at (-1,1){$0$};
\node (three) at (2,0){$B^{i}\otimes \widetilde{B}^{(j-1)}$};
\node (four)  at (-2,0){$B^{(i-1)}\otimes \widetilde{B}^{(j)} $};
\node (five) at  (3,1){$0$} ;
\node (six) at   (-3,1) {$0$};
\node (seven) at  (4,0) {$\ldots$};
\node (eight) at (-4,0) {$\ldots$};
\draw [->](zero)-- (one) ;
\draw [->](zero)-- (two);
\draw [->](three)-- (one);
\draw [->](four)-- (two);
\draw [->](three)-- (five);
\draw [->](four)--(six);
\draw [<-] (six)--(eight);
\draw [<-](five)--(seven);
\end{tikzpicture}
\end{equation*}
Clearly,
\[H_0(\pr/\gamma_n;F_{n-1}(X\wedge^{\bbL}Y))=0, \]
and
\[H_1(\pr/\gamma_n;F_{n-1}(X\wedge^{\bbL}Y))=\bigoplus_{i+j=n+1}B^{(i)}\otimes \widetilde{B}^{(j)} .\]

%By the Discussion in \ref{minustensortwo} i
It follows that for all $p\geq 0$ and all $m\neq -n, -n-1\ \ \mathrm{mod}\ N$, the terms $H_p(\pr/\gamma_n;F_m(X\wedge^{\bbL}Y))$ all vanish. This completes the computation of
the $E^2$ terms of the spectral sequence. It is concentrated in degrees $(0,m)$ and $(1,m-1)$ with $m\equiv n \ \operatorname{mod} N$. Therefore the spectral sequence collapses and we have a short exact sequence
\begin{equation*}
0\lra \bigoplus_{i+j=n}\left( Z^{(i)}\otimes \widetilde{B}^{(j)}\bigoplus_{B^{(i)}\otimes \widetilde{B}^{(j)}}B^{(i)}\otimes \widetilde{Z}^{(j)}\right) \lra F_n(E_{\gamma_n}) \lra 
\bigoplus_{i+j=n+1}B^{(i)}\otimes \widetilde{B}^{(j)}\lra 0.
\end{equation*}
This concludes the calculation of the spectral sequence \eqref{ssgamman}.

\bigskip
\noindent\fbox{%
    \parbox{15em}{%
       {\bf Step 2:} $H_p(\pr/\zeta_n; F_q(X\wedge^{\bbL}Y))$
    }%
}

\bigskip
We will now repeat the previous strategy and apply it to the spectral sequence \eqref{sszetan}. Recall the poset $J_n$ from (\ref{subposetzetan}), which is the following subposet of $\pr/\zeta_n$.
\begin{equation*}
\begin{tikzpicture}[scale=1.5]
\node (zero) at (0,1) {$(\zeta_i,\zeta_j)$};
\node (one) at (1,0) {$(b_i,b_{j-1})$};
\node (two) at (2,1) {$(\zeta_{i+1},\zeta_{i-1})$};
\node (three)at ( 3,0) {$\ldots$};
\node (four) at (-1,0) {$(\beta_{i-1},\beta_j)$};
\node (five) at (-2,1) {$(\zeta_{i-1},\zeta_{i+1})$};
\node (six) at (-3,0) {$\ldots$};
\draw [->] (one)-- (zero);
\draw [->] (one)-- (two);
\draw [->] (three)-- (two);
\draw [->] (four)-- (zero);
\draw [->] (four)-- (five);
\draw [->] (six)-- (five);
\end{tikzpicture}
\end{equation*}
By Remark \ref{thetanisrightadjoint}, the inclusion functor $\theta\colon J_n\to \pr/\zeta_n$ has a left left adjoint $L$, and we have 
\begin{displaymath}
\Ez{n}\cong\hocolim_{\pr/\zeta_n}(X\wedge^{\bbL} Y) \cong \hocolim_{J_n}\theta^*(X\wedge^{\bbL} Y),
\end{displaymath}
see (\ref{Ezetansecond}).
So, instead of the spectral sequence (\ref{sszetan}) we can compute the following spectral sequence 
\begin{displaymath}
H_p(J_n; F_q(\theta^*(X\wedge^{\bbL}Y))) \Rightarrow F_{p+q}(\hocolim_{J_n}\theta^*(X\wedge^{\bbL}Y)) 
\end{displaymath}
since both converge to the same target, \emph{i.e.}, the $F_*$-homology of $\Ez{n}$, 
\[F_*(\hocolim_{J_n}\theta^*(X\wedge^{\bbL}Y))\cong F_*(\hocolim_{\pr/\zeta_n} (X\wedge^{\bbL} Y))\cong F_*(E_{\zeta_n}).\]
In fact this, can be made stronger. The adjoint pair $L\colon \proz{n}\rightleftarrows J_n\colon \theta$ induces a natural isomorphism
\[H_*(\proz{n}; F_q(X\wedge^{\bbL}Y ))\cong H_*(J_n, \theta^* F_q(X\wedge^{\bbL}Y ) ) . 
\]
%exigise gt einai natural isomorphism

%By Discussion \ref{minustensortwo}, and f
From the diagram $J_n$ we again only need to consider $F_{n}(-)$ and $F_{-n-1}(-)$. Firstly, we apply $F_{n}(-)$ to the diagram 
$\theta^*(X\wedge^{\bbL}Y)$ and we get $F_{n}(\theta^*(X\wedge^{\bbL}Y))\in \cA^{J_n}$ as below.
\begin{equation*}
\begin{tikzpicture}[scale=1.5]
\node (zero) at (0,1) {$Z^{(i)} \otimes\widetilde{Z}^{(j)}$};
\node (one) at (1,0) {$0$};
\node (two) at (2,1) {$Z^{(i+1)}\otimes\widetilde{Z}^{(i-1)}$};
\node (three)at ( 3,0) {$\ldots$};
\node (four) at (-1,0) {$0$};
\node (five) at (-2,1) {$Z^{(i-1)}\otimes\widetilde{Z}^{(i+1)}$};
\node (six) at (-3,0) {$\ldots$};
\draw [->] (one)-- (zero);
\draw [->] (one)-- (two);
\draw [->] (three)-- (two);
\draw [->] (four)-- (zero);
\draw [->] (four)-- (five);
\draw [->] (six)-- (five);
\end{tikzpicture}
\end{equation*}
From this we get that 
\begin{equation*}
H_0(J_n; F_{n}(\theta^*(X\wedge^{\bbL}Y)))= \bigoplus_{i+j=n}Z^{(i)}\otimes \widetilde{Z}^{(j)}
\end{equation*}
and 
\[
H_p(J_n; F_{n}(\theta^*(X\wedge^{\bbL}Y)))=0, \ \ p\geq 1.  
\]

Next, we will apply the functor $F_{n-1}(-)$ to obtain the diagram $F_{n-1}(\theta^*(X\wedge^{\bbL}Y))\in \cA^{J_n}$ depicted below.
\begin{equation*}
\begin{tikzpicture}[scale=1.5]
\node (zero) at (0,1) {$0$};
\node (one) at (1,0) {$B^{(i)}\otimes\widetilde{B}^{(j-1)}$};
\node (two) at (2,1) {$0$};
\node (three)at ( 3,0) {$\ldots$};
\node (four) at (-1,0) {$B^{(i-1)}\otimes \widetilde{B}^{(j)}$};
\node (five) at (-2,1) {$0$};
\node (six) at (-3,0) {$\ldots$};
\draw [->] (one)-- (zero);
\draw [->] (one)-- (two);
\draw [->] (three)-- (two);
\draw [->] (four)-- (zero);
\draw [->] (four)-- (five);
\draw [->] (six)-- (five);
\end{tikzpicture}
\end{equation*}
From the above we get that
\[
H_1(J_n; F_{n-1}(\theta^* (X\wedge^{\bbL}Y )))=\bigoplus_{i+j=n+1}B^{(i)}\otimes \widetilde{B}^{(j)}      \]
and 
\[
H_p(J_n; F_{n-1}(\theta^*(X\wedge^{\bbL}Y)))=0 \ \  \mbox{for $p=0$ and $p\geq 2$}.
\]
This completes the computation of the $E^2$-term of the final spectral sequence. It is concentrated in degrees $(0,m)$ and $(1,m-1)$ with $m \equiv\  n \operatorname{mod} N$. Therefore, the spectral 
sequence collapses and we 
have a short exact sequence
\begin{equation*}
0\lra \bigoplus_{i+j=n}Z^{(i)}\otimes \widetilde{Z}^{(j)}  \lra  F_{n}(E_{\zeta_n}) \lra \bigoplus_{i+j=n-1}B^{(i)}\otimes \widetilde{B}^{(j)}\lra 0. 
\end{equation*}

\noindent\fbox{%
    \parbox{24em}{%
       {\bf Step 3: the monomorphism $F_*(E_{\gamma_n}) \rightarrow F_*(E_{\zeta_n})$} 
    }%
}

\bigskip
Now that we have calculated both spectral sequences we can continue with the proof that $F_*(E_{\gamma_n}) \rightarrow F_*(E_{\zeta_n})$ is a monomorphism. The map of posets $\psi\colon \pr/\gamma_n\to \pr/\zeta_n$ induces morphisms on homologies of categories with coefficients $F_{n}(-)$ and $F_{n-1}(-)$ respectively, \ie,
\begin{align}
H_*(\pr/\gamma_n; F_{n}(X\wedge^{\bbL}Y))   &\lra H_*(\pr/\zeta_n; F_{n}(X\wedge^{\bbL}Y))\cong H_*(J_n; F_{n}(\theta^*(X\wedge^{\bbL}Y )))   \nonumber \\ 
H_*(\pr/\gamma_n; F_{n-1}(X\wedge^{\bbL}Y)) &\lra H_*(\pr/\zeta_n; F_{n-1}(X\wedge^{\bbL}Y))\cong H_*(J_n; F_{n-1}(\theta^*(X\wedge^{\bbL}Y ))).   \nonumber 
\end{align}
%The natural map $F_*(E_{\gamma_n})\to F_*(E_{\zeta_n})$ induced by $\psi$ is compatible with the morphism of spectral sequences. 
Hence we have a morphism of short exact sequences
\begin{equation*}
\xymatrix{
 0 \ar[r] &  \Oplus_{i+j=n}\left( Z^{(i)}\otimes \widetilde{B}^{(j)}\coprod_{B^{(i)}\otimes \widetilde{B}^{(j)}}B^{(i)}\otimes \widetilde{Z}^{(j)}\right) \ar[r] \ar[d] & F_{n}(E_{\gamma_n}) \ar[r] \ar[d] & \Oplus_{i+j=n+1}B^{(i)}\otimes 
\widetilde{B}^{(j)}  \ar[d]^{\cong} \ar[r] & 0 \\
0 \ar[r] &\Oplus_{i+j=n}Z^{(i)}\otimes \widetilde{Z}^{(j)}  \ar[r] & F_{n}(E_{\zeta_n}) \ar[r] &  \Oplus_{i+j=n+1}B^{(i)}\otimes \widetilde{B}^{(j)} \ar[r] & 0.}
\end{equation*}
By naturality, the left vertical map is the direct sum of the pushout-product maps 
\[ \lambda_i\boxprod \widetilde{\lambda}_j\colon  \left(Z^{(i)}\otimes \widetilde{B}^{(j)}\coprod_{B^{(i)}\otimes \widetilde{B}^{(j)}}B^{(i)}\otimes \widetilde{Z}^{(j)}\right)\lra Z^{(i)}\otimes 
\tZ^{(j)}.  \]
By Lemma \ref{ppinjective}, the map $\lambda_i\boxprod\widetilde{\lambda}_j$ is injective which means that so is the left vertical map. The five lemma now implies that the morphism 
\begin{displaymath}
 F_{n}(E_{\gamma_n})\lra F_{n}(E_{\zeta_n})
\end{displaymath}
is an injection. In particular, $F_*(E_{\gamma_n})$ and $F_*(\Ez{n})$ are concentrated in the correct degrees and the induced morphisms $F_*(E_{\gamma_n})\to F_*(E_{\zeta_n}) $ are injections. This 
concludes the proof of the proposition.
\end{proof}

Corollary \ref{pullbackviaiisine} now follows: the diagram $E$ is indeed in the subcategory $\cL \subseteq \Ho(\cM^{\cC_N})$ as the vertices are in the correct degree shifts of $\cB$, and $F$ applied to the edges $E_{\gamma_n} \to E_{\zeta_n}$ is a monomorphism, which is precisely how $\cL$ was defined. 

\subsection{Cones }\label{newcones}
In the previous section we proved that for any two crowned diagrams $X, Y\in \cL$ which are objectwise projective, $i^*E=i^*\pr_!(X\wedge^{\bbL} Y)\in \cL$. In this subsection we will prove that applying the functor $\cQ$ to the object $i^*E$ is a good model for the tensor product $\cQ(X)\otimes \cQ(Y)$. This will follow as a corollary from the following proposition.
\begin{prop}\label{newconesE}
Consider $E\in \bbL\pr_!(X\wedge^{\bbL} Y)\in \Ho(\cM^{\cD_N}) $ and let $i^*E$ be the pullback of $E$ along $i\colon \cC_N\lra \cD_N$. For every $n\in \bbZ/N\bbZ$ we have a canonical isomorphism
\begin{displaymath}
\operatorname{cone}(i^*E_{\beta_{n-1}}\lra i^*E_{\zeta_n})\cong \bigvee_{i+j=n}  \operatorname{cone}(k_i) \wedge^{\bbL} \operatorname{cone}(\tilde{k}_j),
\end{displaymath}
where $k_i: X_{\beta_{i-1}} \longrightarrow X_{\zeta_i}$ is a structure morphism of $X \in \cL$. 
\end{prop}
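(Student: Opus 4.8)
Here all objects occurring below are objectwise cofibrant (as the external smash is a Quillen bifunctor and restrictions to subposets stay objectwise cofibrant), so their homotopy colimits are computed by the simplicial-replacement calculus of Section~\ref{sec:prelims}. The plan is to realise every object in sight as a homotopy colimit over a slice poset of $\pr$, model the mapping cone on the left as a homotopy colimit over an augmented poset, and then match it with a disjoint union of augmented squares computing the smash products of cones, via a homotopy-terminality (Quillen Theorem~A) argument. To begin, since $i\colon\cC_N\to\cD_N$ sends $\beta_m\mapsto\gamma_m$ and fixes the $\zeta_m$, we have $i^*E_{\beta_{n-1}}=E_{\gamma_{n-1}}$ and $i^*E_{\zeta_n}=E_{\zeta_n}$, and Proposition~\ref{holanformula} identifies these with $\hocolim_{\prog{n-1}}(X\wedge^{\bbL}Y)$ and $\hocolim_{\proz{n}}(X\wedge^{\bbL}Y)$, the comparison map being the one induced by the inclusion of slice posets $\phi\colon\prog{n-1}\hookrightarrow\proz{n}$ of~\eqref{psiphi}. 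Note that $\prog{n-1}$ is a sieve (downward closed subposet) of $\proz{n}$, both being slices of $\pr$ and $\gamma_{n-1}\le\zeta_n$ in $\cD_N$.

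Next I would model the mapping cone. Let $(\proz{n})^{+}$ be the poset obtained from $\proz{n}$ by adjoining a single object $\infty$ lying above every object of $\prog{n-1}$ (and above nothing else), and let $D^{+}$ extend $X\wedge^{\bbL}Y$ by $D^{+}(\infty)=\ast$. Then $(\proz{n})^{+}$ is the pushout of posets obtained by gluing onto $\proz{n}$, along the sieve $\prog{n-1}$, the poset $\prog{n-1}$ with a terminal object adjoined; homotopy colimit turns this gluing into a homotopy pushout, and the homotopy colimit over a poset with a terminal object carrying $\ast$ there is $\ast$, so
\[
\hocolim_{(\proz{n})^{+}}D^{+}\;\cong\;\cone\!\bigl(i^*E_{\beta_{n-1}}\lra i^*E_{\zeta_n}\bigr).
\]
In exactly the same way I set up the right-hand side: for each pair $(i,j)$ with $i+j=n$ the four objects $(\beta_{i-1},\beta_{j-1})$, $(\zeta_i,\beta_{j-1})$, $(\beta_{i-1},\zeta_j)$, $(\zeta_i,\zeta_j)$ span a full $\square$-shaped subposet $\square_{ij}\subseteq\proz{n}$ on which $X\wedge^{\bbL}Y$ restricts to the objectwise smash (Definition~\ref{objectwisesmash}) of the arrows $k_i\colon X_{\beta_{i-1}}\to X_{\zeta_i}$ and $\tilde{k}_j\colon Y_{\beta_{j-1}}\to Y_{\zeta_j}$. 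With $\ulcorner_{ij}:=\square_{ij}\setminus\{(\zeta_i,\zeta_j)\}$, the description of pushout–products as left Kan extensions along $\pr\colon\square\to[1]$ gives $\hocolim_{\ulcorner_{ij}}=$ the source of $k_i\boxprod^{\bbL}\tilde{k}_j$ and $\hocolim_{\square_{ij}}=X_{\zeta_i}\wedge^{\bbL}Y_{\zeta_j}$, whence, as above, $\cone(k_i\boxprod^{\bbL}\tilde{k}_j)\cong\hocolim_{(\square_{ij})^{+}}$ with $(-)^{+}$ adjoining a terminal object over $\ulcorner_{ij}$ carrying $\ast$. The $\square_{ij}$ are pairwise disjoint inside $\proz{n}$ and each $\ulcorner_{ij}$ lies in $\prog{n-1}$, so there is a functor
\[
G_n\colon\Bigl(\textstyle\coprod_{i+j=n}\square_{ij}\Bigr)^{+}\lra(\proz{n})^{+},
\]
where now $(-)^{+}$ adjoins a single object $\infty$ above $\coprod_{i+j=n}\ulcorner_{ij}$; $G_n$ is the inclusion on each $\square_{ij}$ and sends $\infty$ to $\infty$, and it carries $D^{+}$ to the extended smashes.

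The crux — and the step I expect to cost the most work — is to show that $G_n$ is homotopy terminal, i.e.\ that the comma poset $y\backslash G_n=\{x:y\le G_n(x)\}$ is weakly contractible for every object $y$ of $(\proz{n})^{+}$. This is a finite verification from the explicit descriptions of $\proz{n}$ and $\prog{n-1}$ recalled in Section~\ref{sec:prelims}: for $y=\infty$ the comma poset is the single point $\{\infty\}$; for $y\in\proz{n}$ whose two subscripts sum to $\equiv n$ (equivalently, $y\notin\prog{n-1}$) the only element of $\coprod_{i+j=n}\square_{ij}$ lying above $y$ is the terminal corner $(\zeta_a,\zeta_b)$ of its unique square, so $y\backslash G_n$ is again a point; and for $y\in\prog{n-1}$ one checks case by case that $y\backslash G_n$ has a least element when $y$ is a top vertex of $\prog{n-1}$ or a ``mixed'' square corner, and is a five-object fence (hence a contractible $1$-complex) when $y$ is a ``centre'' $(\beta_a,\beta_b)$. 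The bookkeeping of which squares each $y$ sees, and of the interaction with the adjoined cone point, is the main obstacle.

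Granting homotopy terminality of $G_n$ one gets $\hocolim_{(\proz{n})^{+}}D^{+}\cong\hocolim_{(\coprod_{i+j=n}\square_{ij})^{+}}(\text{extended smashes})$. The right-hand side is a homotopy colimit over a disjoint union with one adjoined cone point over $\coprod_{i+j=n}\ulcorner_{ij}$, so by the second paragraph applied componentwise it equals $\coprod_{i+j=n}\cone(k_i\boxprod^{\bbL}\tilde{k}_j)$, and Corollary~\ref{conemonoidal} rewrites each summand as $\cone(k_i)\wedge^{\bbL}\cone(\tilde{k}_j)$. Since a finite coproduct in the pointed category $\Ho(\cM)$ is a wedge, we land on $\bigvee_{i+j=n}\cone(k_i)\wedge^{\bbL}\cone(\tilde{k}_j)$, and every identification above is manifestly natural in $X$ and $Y$, giving the canonical isomorphism. (The argument is purely homotopical and does not use the projectivity hypotheses of Proposition~\ref{propA}.)
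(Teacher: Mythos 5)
Your argument is correct, but it follows a genuinely different route from the paper's. The paper identifies the map $E_{\gamma_{n-1}}\to E_{\zeta_n}$ with the map on homotopy colimits over $\proz{n}$ induced by the counit $\varepsilon\colon \bbL\phi_!\phi^*(X\wedge^{\bbL}Y)\to X\wedge^{\bbL}Y$ of the derived adjunction along $\phi\colon\prog{n-1}\to\proz{n}$; it then computes $\bbL\phi_!\phi^*(X\wedge^{\bbL}Y)$ pointwise via the slice formula, takes \emph{objectwise} cones to produce a $\proz{n}$-diagram whose only nontrivial entries sit in the squares (with $\cone(k_i\boxprod^{\bbL}\tilde{k}_j)$ at the terminal corners and basepoints elsewhere), commutes $\cone$ past $\hocolim$, and reads off the wedge. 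You instead encode each mapping cone directly as a homotopy colimit over an augmented poset (adjoining a cone point carrying $\ast$ over the sieve $\prog{n-1}\subseteq\proz{n}$, respectively over $\ulcorner_{ij}\subseteq\square_{ij}$) and compare the two sides by a single homotopy-terminality argument for the inclusion of the augmented squares into the augmented slice. Your route avoids two things the paper has to do: the pointwise computation of $\bbL\phi_!\phi^*$ and the (implicitly used) commutation of homotopy cofibers with homotopy colimits of diagrams of cofiber sequences; the price is the sieve-decomposition lemma for homotopy colimits and the case-by-case contractibility check of the comma posets, which I verified goes through as you describe (the only non-discrete, non-least-element case being the five-object fence over $(\beta_a,\beta_b)$ with $a+b\equiv n-1$; note that for $(\beta_a,\beta_b)$ with $a+b\equiv n-2$ the comma poset has $y$ itself as least element, a case your enumeration slightly mislabels but which causes no problem). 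One small gloss: the augmented union $\bigl(\coprod_{i+j=n}\square_{ij}\bigr)^{+}$ shares a single cone point, so its homotopy colimit is not literally a coproduct of the $\hocolim_{(\square_{ij})^{+}}$; it is the homotopy cofiber of a map of finite coproducts, which agrees with $\coprod_{i+j=n}\cone(k_i\boxprod^{\bbL}\tilde{k}_j)$ because homotopy cofibers commute with finite coproducts. You are also right that no projectivity hypotheses are needed here; as in the paper, those enter only when $F_*$ is applied in Corollary \ref{homoofnewcones}.
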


\begin{proof}

This proof has three main parts. Firstly, we will work out the morphism $i^*E_{\beta_{n-1}}\lra i^*E_{\zeta_n}$ by calculating the relevant values of $E_{\beta_{n-1}}$ and $E_{\zeta_n}$ using their description as homotopy colimits over slice categories, see Section \ref{sec:slicecats}. We will arrive at the conclusion that the LHS is actually $\hocolim_{\pro/\zeta_n}\left(\cone(\varepsilon_{X\wedge^{\bbL}Y})\right)$, where $\varepsilon_{X\wedge^{\bbL}Y}$ is the counit of a certain adjunction. We will then explicitly determine the map of diagrams $\varepsilon_{X\wedge^{\bbL}Y}$ in Step 2 and calculate its cone in Step 3. 

\bigskip
\noindent\fbox{%
    \parbox{18em}{%
       {\bf Step 1: Unravelling $i^*E_{\beta_{n-1}}\lra i^*E_{\zeta_n}$.}
    }%
}

\bigskip
Recall the slice categories of the map $\pr\colon \cC_N\times \cC_N\lra \cD_N$ over $\zeta_n$, $\pr/\zeta_n$ from Example (\ref{zetan}), and $\prog{n}$ from Example (\ref{gamman}). 
By definition of $i$, we have that 
\[
(i^*E_{\beta_{n-1}}\lra i^*E_{\zeta_n}) = E_{\gamma_{n-1}} \longrightarrow E_{\zeta_n}.
\]

Let us begin by recalling the diagram $X\wedge^{\bbL}Y \in \Ho(\cM^{\pr/\zeta_n}).$
The red color shows the image of the map of posets $\phi\colon \pr/\gamma_{n-1}\lra \pr/\zeta_n.$
 \begin{equation}\label{imageofgammanzetan}
\begin{tikzpicture}[scale=.7]
  \node (one) at (0,2) {$(\zeta_i ,\zeta_j)$};     %X_{\zeta_s}\wedge Y_{\zeta_t} \mathcal{B}^s\wedge \mathcal{B}^t
  \node (a) at (-4,-.5) {$(\beta_{i-1}, \zeta_j)$};
  \node (b) at (-1,0) {$(\zeta_i ,\beta_j)$};
  \node (c) at (1,0) {$(\beta_i, \zeta_j)$};
  \node (d) at (4,-0.5) {$(\zeta_i, \beta_{j-1})$};
  \node (zero) at (0,-2) {$(\beta_i, \beta_j)$}; 
	\node (minus) at (0,-4) {$(\beta_{i-1}, \beta_{j-1})$};
	\node (minusone) at  (-4.5,-2.5) {$(\beta_{i-1}, \beta_j)$};
	\node (minustwo) at  (4.5,-2.5) {$(\beta_i,  \beta_{j-1})$}; 
	\node (extraleftone) at (-7,-.5)  {$\ldots$};   %Z^{i+1}\otimes \widetilde{B}^j
	\node (extralefttwo) at (-8,-2) {$\ldots$};
	\node (extrarightone) at (8,-2) {$\ldots$};
	\node (extrarighttwo) at (7,-.5) {$\ldots$};     %B^i\otimes \widetilde{Z}^{j+1}
 %\node (extraleftthree) at (-10,0) {$0$};
	%\node (extrarightthree) at (10,0) {$0$};
	\draw  [->]   (zero)--(b) ; %bue
	\draw [->]  (zero)--(c) ; %blue
\draw	 [->](b)-- (one) ;
\draw	 [->](c)--(one) ;
	\draw [red][->]  (minus)--(a); %%red
	\draw [red][->]   (minus)-- (d) ; %red
\draw	 [red][->]  (minusone)--(a) ; %red
	\draw [->]  (minusone) --(b) ; %blue
\draw	 [->]  (minustwo)--(c) ; %blue
	\draw [red][->] (minustwo)--(d); %red
\draw  [->]	(a)--(one) node[midway,above left] {$$}   ;
\draw	 [->] (d)--(one) node[midway, above right] {$$} ;
\draw [red][->]  (minusone) --(extraleftone) ;
\draw [->] (minusone)--(extralefttwo) ;
\draw [->] (minustwo)-- (extrarightone);
\draw [red][->] (minustwo)-- (extrarighttwo);
%\draw [->] (extralefttwo) --(extraleftthree); 
%\draw [->] (extraleftone)-- (extraleftthree) ;
%\draw [->] (extrarighttwo) --(extrarightthree); 
%\draw [->] (extrarightone)-- (extrarightthree) ;
\end{tikzpicture}
\end{equation}
Recall from \eqref{value2} that
\begin{displaymath}
E_{\zeta_n}= \hocolim( \pr/\zeta_n \stackrel{\pi}{\lra} \cC_N\times \cC_N \xrightarrow{X\wedge^{\bbL}Y} \cM),
\end{displaymath}
and we committed an abuse of notation by writing 
\begin{equation*}
E_{\zeta_n}= \hocolim_{\pr/\zeta_n}(X\wedge^{\bbL}Y)=\hocolim_{\pr/\zeta_n}\pi^*(X\wedge^{\bbL}Y).
\end{equation*} 
Also, recall from \eqref{gammanonezetan} that the morphism $E_{\gamma_{n-1}}\lra E_{\zeta_n}$ 
is 
the canonical morphism
\begin{equation*}
E_{\gamma_{n-1}} =\hocolim_{\pr/\gamma_{n-1}}\phi^*(X\wedge^{\bbL}Y)\lra \hocolim_{\pr/\zeta_n}(X\wedge^{\bbL}Y) = E_{\zeta_n}
\end{equation*}
induced by the map of posets $\phi\colon \pr/\gamma_{n-1}\lra \pr/\zeta_n$. 
The pullback functor 
\begin{equation*}
\phi^*\colon \Ho(\cM^{\pro/\zeta_n})\lra \Ho(\cM^{\pro/\gamma_{n-1}})
\end{equation*} has a 
left adjoint defined by the homotopy left Kan extension $\bbL\phi_!$, that is, 
\begin{equation*}\label{hoadjunction}
\bbL\phi_!\colon\Ho(\cM^{\pr/\gamma_{n-1}}) \rightleftarrows \Ho(\cM^{\pr/\zeta_n})\colon \phi^*.
\end{equation*}
The counit of the derived adjunction $\varepsilon\colon \bbL\phi_!\phi^*\lra \mathrm{Id}$ provides the canonical natural transformation
\begin{equation}\label{counitxby}
\varepsilon_{X\wedge^{\bbL}Y} \colon \bbL\phi_!\phi^*(X\wedge^{\bbL}Y )\lra X\wedge^{\bbL}Y.
\end{equation}
Lastly, since $\bbL\phi_!$ is a homotopy left Kan extension, there is a canonical isomorphism
\begin{equation*}
\hocolim_{\pr/\gamma_{n-1}} \phi^*(X\wedge^{\bbL}Y) \cong \hocolim_{\pr/\zeta_n} \bbL\phi_!\phi^*(X\wedge^{\bbL}Y).
\end{equation*}
Putting all this together means that the LHS of Proposition \ref{newconesE} is 
\[
\hocolim_{\pro/\zeta_n}\left(\cone(\varepsilon_{X\wedge^{\bbL}Y})\right).
\]

\bigskip
\noindent\fbox{%
    \parbox{15em}{%
       {\bf Step 2: Working out $\varepsilon_{X\wedge^{\bbL}Y}$}.
    }%
}

The underlying diagram $X\wedge^{\bbL}Y\in  \Ho(\cM^{\pr/\zeta_n})$ looks as follows.
\begin{equation}\label{functorprozetan}
\begin{tikzpicture}[baseline=(current  bounding  box.north)][scale=.7]
  \node (one) at (0,2) {$ X_{\zeta_i} \wedge Y_{\zeta_j}$};     %X_{\zeta_s}\wedge Y_{\zeta_t} \mathcal{B}^s\wedge \mathcal{B}^t
  \node (a) at (-4,-.5) {$X_{\beta_{i-1}}\wedge Y_{\zeta_j}$};
  \node (b) at (-1,0) {$X_{\zeta_i}\wedge Y_{\beta_j}$};
  \node (c) at (1,0) {$X_{\beta_i}\wedge Y_{\zeta_j}$};
  \node (d) at (4,-0.5) {$X_{\zeta_i}\wedge Y_{\beta_{j-1}}$};
  \node (zero) at (0,-2) {$X_{\beta_i}\wedge Y_{\beta_j}$}; 
	\node (minus) at (0,-4) {$X_{\beta_{i-1}}\wedge Y_{\beta_{j-1}}$};
	\node (minusone) at  (-4.5,-2.5) {$X_{\beta_{i-1}}\wedge Y_{\beta_j}$};
	\node (minustwo) at  (4.5,-2.5) {$X_{\beta_i}\wedge  Y_{\beta_{j-1}}$}; 
	\node (extraleftone) at (-7,-.5)  {$\ldots$};   %Z^{i+1}\otimes \widetilde{B}^j
	\node (extralefttwo) at (-8,-2) {$\ldots$};
	\node (extrarightone) at (8,-2) {$\ldots$};
	\node (extrarighttwo) at (7,-.5) {$\ldots$};     %B^i\otimes \widetilde{Z}^{j+1}
 %\node (extraleftthree) at (-10,0) {$0$};
	%\node (extrarightthree) at (10,0) {$0$};
	\draw  [->]   (zero)--(b) ; %bue
	\draw [->]  (zero)--(c) ; %blue
\draw	 [->](b)-- (one) ;
\draw	 [->](c)--(one) ;
	\draw [->]  (minus)--(a); %%red
	\draw [->]   (minus)-- (d) ; %red
\draw	 [->]  (minusone)--(a) ; %red
	\draw [->]  (minusone) --(b) ; %blue
\draw	 [->]  (minustwo)--(c) ; %blue
	\draw [->] (minustwo)--(d); %red
\draw  [->]	(a)--(one) node[midway,above left] {$$}   ;
\draw	 [->] (d)--(one) node[midway, above right] {$$} ;
\draw [->]  (minusone) --(extraleftone) ;
\draw [->] (minusone)--(extralefttwo) ;
\draw [->] (minustwo)-- (extrarightone);
\draw [->] (minustwo)-- (extrarighttwo);
%\draw [->] (extralefttwo) --(extraleftthree); 
%\draw [->] (extraleftone)-- (extraleftthree) ;
%\draw [->] (extrarighttwo) --(extrarightthree); 
%\draw [->] (extrarightone)-- (extrarightthree) ;
\end{tikzpicture}
\end{equation}
Furthermore, the homotopy left Kan extension $\bbL\phi_!\phi^*(X\wedge^{\bbL}Y)\in \Ho(\cM^{\pro/\zeta_n})$ is the following.
\begin{equation}\label{hokanextendofabove}
\begin{tikzpicture}[baseline=(current  bounding  box.north)][scale=.9]
  \node (one) at (0,2) {$ X_{\zeta_{i-1}}\wedge Y_{\zeta_j} \coprod^h_{X_{\beta_{i-1}}\wedge Y_{\beta_{j-1}}}X_{\zeta_i}\wedge Y_{\beta_{j-1}}  $};     %X_{\zeta_s}\wedge Y_{\zeta_t} \mathcal{B}^s\wedge \mathcal{B}^t
  \node (a) at (-4,-.5) {$X_{\beta_{i-1}}\wedge Y_{\zeta_j}$};
  \node (b) at (-1,0) {$X_{\beta_{i-1}}\wedge Y_{\beta_j}$};
  \node (c) at (1,0) {$X_{\beta_i}\wedge Y_{\beta_{j-1}}$};
  \node (d) at (4,-0.5) {$X_{\zeta_i}\wedge Y_{\beta_{j-1}}$};
  \node (zero) at (0,-2) {$\ast$}; 
	\node (minus) at (0,-4) {$X_{\beta_{i-1}}\wedge Y_{\beta_{j-1}}$};
	\node (minusone) at  (-4.5,-2.5) {$X_{\beta_{i-1}}\wedge Y_{\beta_j}$};
	\node (minustwo) at  (4.5,-2.5) {$X_{\beta_i}\wedge  Y_{\beta_{j-1}}$}; 
	\node (extraleftone) at (-7,-.5)  {$\ldots$};   %Z^{i+1}\otimes \widetilde{B}^j
	\node (extralefttwo) at (-8,-2) {$\ldots$};
	\node (extrarightone) at (8,-2) {$\ldots$};
	\node (extrarighttwo) at (7,-.5) {$\ldots$};     %B^i\otimes \widetilde{Z}^{j+1}
 %\node (extraleftthree) at (-10,0) {$0$};
	%\node (extrarightthree) at (10,0) {$0$};
	\draw  [->]   (zero)--(b) ; %bue
	\draw [->]  (zero)--(c) ; %blue
\draw	 [->](b)-- (one) ;
\draw	 [->](c)--(one) ;
	\draw [->]  (minus)--(a); %%red
	\draw [->]   (minus)-- (d) ; %red
\draw	 [->]  (minusone)--(a) ; %red
	\draw [double equal sign distance]  (minusone) --(b) ; %blue
\draw	 [double equal sign distance]  (minustwo)--(c) ; %blue
	\draw [->] (minustwo)--(d); %red
\draw  [->]	(a)--(one) node[midway,above left] {$$}   ;
\draw	 [->] (d)--(one) node[midway, above right] {$$} ;
\draw [->]  (minusone) --(extraleftone) ;
\draw [->] (minusone)--(extralefttwo) ;
\draw [->] (minustwo)-- (extrarightone);
\draw [->] (minustwo)-- (extrarighttwo);
%\draw [->] (extralefttwo) --(extraleftthree); 
%\draw [->] (extraleftone)-- (extraleftthree) ;
%\draw [->] (extrarighttwo) --(extrarightthree); 
%\draw [->] (extrarightone)-- (extrarightthree) ;
\end{tikzpicture}
\end{equation}
We briefly explain how we calculated the left homotopy Kan extension $\bbL\phi_!(X\wedge^{\bbL}Y)$. From the formula \eqref{holanformula} of calculating homotopy Kan extensions, we can calculate
the homotopy left Kan extension $\bbL\phi_!\phi^*$ at an object $(\alpha_s,\alpha_t)\in \proz{n}$ as 
\[ \bbL\phi_!(X\wedge^{\bbL}Y)_{(\alpha_s,\alpha_t)}\cong \hocolim( \phi/(\alpha_s,\alpha_t) \stackrel{\pi}{\rightarrow} \prog{n-1}\xrightarrow{\phi^*(X\wedge^{\bbL} Y) } \cM).  \]
%Recall from \eqref{sliceposets} the slice $f/d=\left\{c\in\cC \mid f_c \leq d\right\}$ for a map of posets $f\colon C\lra D$.
For the object $(\zeta_i,\beta_j)$, the slice $\phi/(\zeta_i,\beta_j)$ consists only of the object the object $(\beta_{j-1},\beta_j)$, which implies that
\[ (\bbL\phi_!)_{(\zeta_i,\beta_j)}=\Xb{i-1}\wedge \Yb{j}.  \]
For the object $(\beta_i,\zeta_j),$ the argument is the same as above. For $(\beta_i,\beta_j),$ the slice category $\phi/(\beta_i,\beta_j)$ is empty, which means that
\[(\bbL\phi_!)_{(\beta_i,\beta_j)}\cong \ast.   \]
For the object $(\zeta_i,\zeta_j),$ the slice category $ \phi/(\zeta_i,\zeta_j)$
is the poset 
\[
\xymatrix{     & (\beta_{i-1},\zeta_j) &   & (\zeta_i,\beta_{j-1}) &    \\
              (\beta_{i-1},\beta_j)\ar[ur]&   & (\beta_{i-1},\beta_{j-1}) \ar[ur] \ar[ul] &   & (\beta_i,\beta_{j-1}). \ar[ul] }
\]
But the subposet
\[\xymatrix{  (\beta_{i-1},\zeta_j)     & (\beta_{i-1},\beta_{j-1}) \ar[r] \ar[l] &    (\zeta_i,\beta_{j-1}) }
\]
is homotopy final, which means that
%\[  \hocolim_{\ulcorner} (X\wedge^{\bbL}Y), \]
%meaning the following homotopy pushout

\[ (\bbL\phi_!)_{(\zeta_i,\zeta_j)} \cong
(\bbL\phi_!)_{(\zeta_i,\zeta_j)}\cong \hocolim \left(  \begin{tikzcd}
  \Xb{i-1}\wedge \Yb{j-1} \arrow[r,"k_i\wedge 1"]   \arrow[d,"1\wedge \tk_j",swap]    &  \Xz{i}\wedge \Yb{j-1}\\ 
 \Xb{i-1}\wedge \Yz{j}                            &
\end{tikzcd}\right).
\]

\bigskip
\noindent\fbox{%
    \parbox{20em}{%
       {\bf Step 3: Calculating the cone in the LHS.}
    }%
}

\bigskip
Next, we calculate the cone of the natural transformation $\varepsilon_{X\wedge^{\bbL}Y}$ \eqref{counitxby} of diagrams in $\Ho(\cM^{\proz{n}})$. We have the diagram
$\cone(\varepsilon_{X\wedge^{\bbL}Y})\in \Ho(\cM^{\proz{n}}),$ which is 
\begin{align*}
\cone\left(\varepsilon_{X\wedge^{\bbL}Y}\right) \colon \pro/\zeta_n &\lra \cM\\
                                                (\alpha_s,\alpha_t) &\mapsto \cone\left(\phi_!(X\wedge^{\bbL}Y)_{(\alpha_s,\alpha_t)} \lra (X\wedge^{\bbL}Y)_{(\alpha_s,\alpha_t)} \right).
\end{align*}
In other words, we are taking objectwise cones of the canonical map from the diagram \eqref{hokanextendofabove} to the diagram \eqref{functorprozetan}. 
This means that $\cone\left(\varepsilon_{X\wedge^{\bbL}Y}\right)$ is as below.
\begin{equation}\label{conezndia}
\begin{tikzpicture}[scale=.8]
  \node (one) at (0,2) {$ \mathrm{cone}(k_i\boxprod^{\bbL} \widetilde{k}_j) $};     %X_{\zeta_s}\wedge Y_{\zeta_t} \mathcal{B}^s\wedge \mathcal{B}^t
  \node (a) at (-4,-.5) {$\ast$};
  \node (b) at (-1,0) {$C^i\wedge Y_{\beta_j}$};
  \node (c) at (1,0) {$X_{\beta_i}\wedge \widetilde{C}^j $};
  \node (d) at (4,-0.5) {$\ast$};
  \node (zero) at (0,-2) {$\Sigma X_{\beta_i}\wedge Y_{\beta_j}$}; 
	\node (minus) at (0,-4) {$\ast$};
	\node (minusone) at  (-4.5,-2.5) {$\ast$};
	\node (minustwo) at  (4.5,-2.5) {$\ast$}; 
	\node (extraleftone) at (-7,-.5)  {$\ldots$};   %Z^{i+1}\otimes \widetilde{B}^j
	\node (extralefttwo) at (-8,-2) {$\ldots$};
	\node (extrarightone) at (8,-2) {$\ldots$};
	\node (extrarighttwo) at (7,-.5) {$\ldots$};     %B^i\otimes \widetilde{Z}^{j+1}
 %\node (extraleftthree) at (-10,0) {$0$};
	%\node (extrarightthree) at (10,0) {$0$};
	\draw  [->]   (zero)--(b) ; %bue
	\draw [->]  (zero)--(c) ; %blue
\draw	 [->](b)-- (one) ;
\draw	 [->](c)--(one) ;
	\draw [->]  (minus)--(a); %%red
	\draw [->]   (minus)-- (d) ; %red
\draw	 [->]  (minusone)--(a) ; %red
	\draw [->]  (minusone) --(b) ; %blue
\draw	 [->]  (minustwo)--(c) ; %blue
	\draw [->] (minustwo)--(d); %red
\draw  [->]	(a)--(one) node[midway,above left] {$$}   ;
\draw	 [->] (d)--(one) node[midway, above right] {$$} ;
\draw [->]  (minusone) --(extraleftone) ;
\draw [->] (minusone)--(extralefttwo) ;
\draw [->] (minustwo)-- (extrarightone);
\draw [->] (minustwo)-- (extrarighttwo);
%\draw [->] (extralefttwo) --(extraleftthree); 
%\draw [->] (extraleftone)-- (extraleftthree) ;
%\draw [->] (extrarighttwo) --(extrarightthree); 
%\draw [->] (extrarightone)-- (extrarightthree) ;
\end{tikzpicture}
\end{equation}
Here, we have denoted 
\begin{align*} C^i:=\cone(k_i)=\mathrm{cone}(X_{i-1}\lra X_{\zeta_i})  \\
 \widetilde{C}^j:=\cone(\widetilde{k}_j)=\mathrm{cone}(Y_{j-1}\lra Y_{\zeta_i}).
\end{align*}
Next, we determine the homotopy colimit of the diagram $\cone\left(\varepsilon_{X\wedge^{\bbL}Y}\right)$. One way is to observe that the homotopy colimit of the above diagram is isomorphic in $\Ho(\cM)$ to the homotopy colimit of (finite) coproduct of squares
%\begin{equation}\label{broken}
%\begin{tikzcd}
%\Sigma \Xb{i}\wedge \Yb{j} \arrow[d] \arrow[r] & \cone(k_i)\wedge \Yb{j} \ar[d] \\
%  \Xb{i}\wedge \cone(\tk_j) \arrow[r] & \cone(k_i\boxprod \tk_j)   
%\end{tikzcd}
%\end{equation}
\begin{equation}\label{broken}
\xymatrix{
\Sigma \Xb{i}\wedge \Yb{j} \ar[d] \ar[r] & \cone(k_i)\wedge \Yb{j} \ar[d] \\
  \Xb{i}\wedge \cone(\tk_j) \ar[r] & \cone(k_i\boxprod^{\bbL} \tk_j),   
}
\end{equation}
where we can consider the above as an object in $\Ho(\cM^{[1] \times [1]})$. Formally this is obtained by taking the visually obvious map of posets $f\colon [1] \times [1]\lra \proz{n}$ (i.e. $(0,0) \mapsto (\beta_i, \beta_j), (0,1) \mapsto (\zeta_i, \beta_j), (1,0) \mapsto (\beta_i, \zeta_j), (1,1) \mapsto(\zeta_i, \zeta_j)$) and considering the pullback
\[f^*\colon \Ho(\cM^{\proz{n}})\lra \Ho(\cM^{[1] \times [1]})  .\]
The bottom right corner of the poset $[1] \times [1]$ is its final object, which implies that the homotopy
colimit of the diagram \eqref{broken} is naturally isomorphic to $\cone(k_i\boxprod^{\bbL} \tk_j)$. Hence the homotopy colimit over $\pr/\zeta_n$ is, up to natural isomorphism, the 
coproduct $\bigvee_{i+j=n}\cone(k_i\boxprod^{\bbL} \cone(\tk_j))$. Another way of seeing this is by pulling back the above diagram to $\theta_n\colon J_n\lra \pro/\zeta_n$, and we get the diagram 
\[
\begin{tikzcd}[column sep=tiny]
& \cone(k_{i-1}\boxprod^{\bbL} k_{j+1}) & & \cone(k_i\boxprod^{\bbL} k_j) & & \cone(k_{i+1}\boxprod^{\bbL} k_{j-1}) & \\
\ldots \arrow[ur]&             &\arrow[ul] \ast \ar[ur]  &         &\arrow[ul] \ast \arrow[ur]  &             &  \ldots \ar[ul].
\end{tikzcd}
\]
All in all, we have that the homotopy colimit of the diagram \eqref{conezndia} is 
\begin{equation}\label{hocolimofcone}
\hocolim_{\pro/\zeta_n}\left(\cone( \varepsilon_{X\wedge^{\bbL}Y}) \right)\cong \bigvee_{i+j=n}\cone(k_i\boxprod^{\bbL}\widetilde{k}_j).
\end{equation}
Finally, by Corollary \ref{conemonoidal}, we have the canonical isomorphism 
\[\cone(k_i\boxprod^{\bbL} \widetilde{k}_j)\cong \cone(k_i)\wedge^{\bbL} \cone(\tk_j)\]
for each pair $i,j\in \bbZ/N\bbZ$.
 The coproduct of these isomorphisms together with \eqref{hocolimofcone} gives us that 
\[\hocolim_{\pro/\zeta_n}\left(\cone(\varepsilon_{X\wedge^{\bbL}Y})\right)\cong \bigvee_{i+j=n} \cone(k_i)\wedge^{\bbL} \cone(\tk_j).   \]
Let us now gather all this information to prove Proposition \ref{newconesE}. Calculating the homotopy cofiber (cone) of the morphisms $i^*E_{\beta_{n-1}}\lra i^*E_{\zeta_n}$ is the same thing as calculating the homotopy cofiber
$E_{\gamma_{n-1}}\lra E_{\zeta_n}$. We have the following natural isomorphisms.
\begin{align}
\cone\left( i^*E_{\beta_{n-1}}\lra i^*E_{\zeta_n}\right) &= \cone\left(E_{\gamma_{n-1}}\lra E_{\zeta_n}  \right)  \nonumber  \\
                                                      &= \cone\left( \hocolim_{\pr/\gamma_{n-1}}\phi^*(X\wedge^{\bbL}Y) \lra \hocolim_{\pro/\zeta_n}(X\wedge^{\bbL}Y)  \right) \nonumber \\
																											&\cong \cone\left(\hocolim_{\pr/\zeta_n}\phi_!\phi^*(X\wedge^{\bbL}Y)  \lra \hocolim_{\pr/\zeta_n}(X\wedge^{\bbL}Y)   \right) \nonumber \\
																											&\cong \hocolim_{\pro/\zeta_n}\left(\cone\left( \phi_!\phi^*(X\wedge^{\bbL}Y)   \lra (X\wedge^{\bbL}Y) \right)  \right) \nonumber \\
																											%&\cong \hocolim_{I}\mathfrak{j}^*\left(\hocofib\left( \bbL\alpha_!(X\wedge^LY)   \lra (X\wedge^LY) \right)  \right) \nonumber\\
																											&\cong \bigvee_{i+j=n} \cone(k_i\boxprod^{\bbL} \widetilde{k}_j) \nonumber \\
																											&\cong  \bigvee_{i+j=n} \cone(k_i)\wedge^{\bbL} \cone(\widetilde{k}_j). \nonumber 
\end{align}
\end{proof}

\begin{cor}\label{homoofnewcones}
Let $X,Y,E$ as before and assume furthermore that $F_*(\Xa{n}), F_*(\Ya{n})\in \cAp$ for any $n\in \bbZ/N\bbZ$ and any $\alpha\in \left\{\zeta,\beta\right\}$. Then there is a canonical isomorphism 
\begin{displaymath}
C^{(n)}(i^*E)= F_*(\operatorname{cone}(i^*E_{\beta_{n-1}}\lra i^*E_{\zeta_n}))\cong \bigoplus_{i+j=n}C^{(i)}(X)\otimes C^{(j)}(Y).
\end{displaymath}
\end{cor}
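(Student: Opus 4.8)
The plan is to obtain this corollary formally from Proposition \ref{newconesE} by applying the homological functor $F_*$ and using nothing beyond its additivity and lax monoidality, together with the projectivity hypothesis on $X$ and $Y$. First I would recall from Proposition \ref{newconesE} the canonical isomorphism
\[
\cone(i^*E_{\beta_{n-1}}\lra i^*E_{\zeta_n})\cong \bigvee_{i+j=n}\cone(k_i)\wedge^{\bbL}\cone(\tk_j)
\]
in $\Ho(\cM)$. Applying $F_*$ and using that $F_*$ is additive, hence commutes with the finite wedge, gives
\[
C^{(n)}(i^*E)=F_*\bigl(\cone(i^*E_{\beta_{n-1}}\lra i^*E_{\zeta_n})\bigr)\cong \bigoplus_{i+j=n}F_*\bigl(\cone(k_i)\wedge^{\bbL}\cone(\tk_j)\bigr),
\]
so the task reduces to identifying each summand with $C^{(i)}(X)\otimes C^{(j)}(Y)=F_*(\cone(k_i))\otimes F_*(\cone(\tk_j))$ via the K\"unneth morphism $\kappa$.

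The one point that needs an argument is that $C^{(i)}(X)$ and $C^{(j)}(Y)$ are projective in $\cA$. This follows from the short exact sequence \eqref{sexci}
\[
0\lra Z^{(i)}(X)\xrightarrow{\iota^{(i)}} C^{(i)}(X)\xrightarrow{\rho^{(i)}} B^{(i-1)}(X)[1]\lra 0 :
\]
by hypothesis $Z^{(i)}(X)=F_*(\Xz{i})$ and $B^{(i-1)}(X)=F_*(\Xb{i-1})$ lie in $\cAp$, and since $[1]$ is an equivalence it preserves projectives, so $B^{(i-1)}(X)[1]$ is projective; hence the sequence splits and $C^{(i)}(X)\cong Z^{(i)}(X)\oplus B^{(i-1)}(X)[1]$ is projective, and likewise for $C^{(j)}(Y)$. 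As projective objects are flat by our running convention, the same reasoning as for the K\"unneth isomorphisms \eqref{Kunneth} shows that
\[
\kappa\colon F_*(\cone(k_i))\otimes F_*(\cone(\tk_j))\xrightarrow{\ \cong\ } F_*\bigl(\cone(k_i)\wedge^{\bbL}\cone(\tk_j)\bigr)
\]
is an isomorphism. Taking the direct sum of these isomorphisms over $i+j=n$ and composing with the isomorphism of the first paragraph yields $C^{(n)}(i^*E)\cong\bigoplus_{i+j=n}C^{(i)}(X)\otimes C^{(j)}(Y)$, with naturality in $X$ and $Y$ inherited from the naturality in Proposition \ref{newconesE} and of $\kappa$.

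I do not expect a serious obstacle here: the only step requiring any care is the projectivity of the cones, and that is immediate from the splitting of \eqref{sexci}; the rest is bookkeeping with the formal properties of a homological functor, so this corollary is genuinely a corollary of Proposition \ref{newconesE}.
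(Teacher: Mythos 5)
Your proposal is correct and follows essentially the same route as the paper's proof: both deduce projectivity of $C^{(i)}(X)$ and $C^{(j)}(Y)$ from the short exact sequence \eqref{sexci} (you just make the splitting argument explicit), then apply the additive, lax monoidal $F_*$ to the wedge decomposition of Proposition \ref{newconesE} and invoke the K\"unneth isomorphism on each summand.
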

\begin{proof}
By our assumption, for any $\alpha\in \left\{\zeta,\beta\right\}$ and any $n\in \bbZ/N\bbZ$, the object $F_*\Xa{n}$ is projective. Therefore, by definition, $Z^{(n)}(X)$ and $B^{(n-1)}(X)$ 
are projective. The short exact sequence \eqref{sexci} now implies that for any $i\in\bbZ/N\bbZ$ the graded object $C^{(i)}(X)$ is projective. It follows by our assumptions that
\[F_*(\cone k_s \wedge^{\bbL} \cone \tk_t) \cong F_*(\cone k_s)\otimes F_*(\cone k_t)   .\]
By Proposition \ref{newconesE} we have 
\[\cone(i^*E_{\beta_{n-1}}\lra i^*E_{\zeta_n})\cong \bigvee_{i+j=n}  \cone(k_i) \wedge^{\bbL} \cone(\tilde{k}_j),\]
and applying the functor $F_*(-)$ we have
\begin{align*}
F_*( \operatorname{cone}(i^*E_{\beta_{n-1}}\lra i^*E_{\zeta_n}))&\cong F_*\left(\bigvee_{i+j=n}  \operatorname{cone}(k_i) \wedge^{\bbL} \operatorname{cone}(\tilde{k}_j)\right) \\
                                                                                &\cong\bigoplus_{i+j=n}F_*(\cone(k_i)\wedge^{\bbL} \cone \tk_j)  \\
																																								&\cong\bigoplus_{i+j=n} F_*(\cone k_i) \otimes F_*(\cone \tk_j ).   \\
																																								\end{align*}
Shifting the above by $[n]=[i+j]$ we have  
\begin{equation*}
C^{(n)}(i^*E)\cong \bigoplus_{i+j=n} C^{(i)}(X)\otimes C^{(j)}(Y).
\end{equation*}
\end{proof}

\subsection{Differentials}\label{differentials}
In the previous subsection we proved that $C_*(i^*E)\cong C_*(X)\otimes C_*(Y)$ as objects in $\cA$, so the diagram $i^*E$ is a good candidate for the tensor product $$C_*(X)\otimes C_*(Y).$$
The final step in order to show that indeed
\[\cQ(i^*E) \cong \cQ(X) \otimes  \cQ(Y )\]
as objects in $\twc$ is to prove that that the differential $d\colon C_*(i^*E) \to C_*(i^*E)[1]$ coincides with the differential of
the tensor product $C_*(X) \otimes C_*(Y)$. That is, we have to show that
\[(C_*(i^*E),d) \cong ((C_*(X)\otimes C_*(Y), d_{\otimes})   \]
where $d_{\otimes}$ is the differential of the tensor product of the dg-objects $(C_*(X),d)$ and $(C_*(Y),d)$.

\subsubsection{Reduction to the Case of Disks}
We will reduce the proof to a much simpler case. Let $L_*\in \twc$ and choose $s\in \bbZ/N\bbZ$. Without loss of generality we will assume that $L_*$ is degreewise projective. Consider the following map of differential graded objects
\begin{equation}\label{diskchain}
\xymatrix{ \ldots \ar[r] & 0 \ar[r] \ar[d] & L_s \ar@{=}[r] \ar@{=}[d] & L_s \ar[r] \ar[d]^{d^s} & 0 \ar[r] \ar[d] & \ldots \\
          \ldots  \ar[r]_{d^{s+2}} &  L_{s+1}   \ar[r]_{d^{s+1}}    &  L_s  \ar[r]_{d^s}        & L_{s-1}  \ar[r]_{d^{s-1}} & L_{s-2} \ar[r]_{d^{s-2}} & \ldots }
\end{equation}
where we view the top differential graded object as an object in $\cB[s-1]\oplus \cB[s]$, meaning that it is concentrated in degrees $s-1$ and $s$ modulo $N$. We denote this by $D^s(L_s)$, and we denote the above map of differential graded objects by
$f_{L,s}\colon D^s(L_s)\lra L_*$. Under the equivalence of categories $\cQ\colon \cL \lra \twc$ there 
are crowned diagrams $X$ and $X^{\prime}$ and a morphism $F\colon X\lra X^{\prime}$ such that the morphism $f_{L,s}$ is realized as $\cQ(F)$. This means that there are isomorphisms 
\[ \cQ(X)\cong D^s(L_s),\  \ \ \ \ \cQ(X^{\prime})\cong L_*   \]
and the following  diagram commutes.
\[
\begin{tikzcd}
 \cQ(X) \arrow{r}{\cQ(F)} \arrow[swap]{d}{\cong} & \cQ(X^{\prime}) \arrow{d}{\cong} \\
               D^s(L_s)     \arrow[swap]{r}{f_{L,s}}     &  L_*     
\end{tikzcd}
\]
Now let $M_*$ be another differential graded object, which we also assume to be degreewise projective, and let $t\in \bbZ/N\bbZ$. Similarly to \eqref{diskchain} we have the morphism
\[g_{M,t}\colon D^t(M_t)\lra M_*.\] Again, under the equivalence $\cQ$ there are crowned diagrams $Y$ and $Y^{\prime}$ and a morphism $G\colon Y\lra Y^{\prime}$ such 
that
 \[ \cQ(Y)\cong D^t(M_t), \ \  \cQ(Y^{\prime})\cong M_*   \]
and the following diagram commutes.
\[\begin{tikzcd}
\cQ(Y) \arrow{r}{\cQ(G)} \arrow[swap]{d}{\cong} & \cQ(Y^{\prime}) \arrow{d}{\cong} \\
               D^t(M_t)     \arrow[swap]{r}{g_{M,t}}     &  M_*       
\end{tikzcd}
\]    
We have the morphism of dg objects
$f_{L,s}\otimes g_{M,t}\colon D^s(L_s)\otimes D^t(M_t)\lra L_*\otimes M_* $
which is
\begin{equation}\label{mapoftensors}
\xymatrix{ \ldots \ar[r] & L_s\otimes M_t \ar[r] \ar[d]  &  (L_s\otimes M_t)\oplus (L_s\otimes M_t) \ar[r] \ar[d]^{(d^s\otimes\mathrm{id},\mathrm{id}\otimes \widetilde{d}^t)} & \ldots \\
              \ldots   \ar[r]        &   \Oplus_{i+j=n}L_i\otimes M_j  \ar[r]  & \Oplus_{i+j=n+1} L_i\otimes M_j   \ar[r]    &\ldots }
\end{equation}
where the left vertical morphism is the inclusion of the $(s,t)$th summand and the right vertical map is the universal map out of the coproduct. 

Now assume that
\[\cQ(i^*\pr_!(X\wedge^{\bbL} Y))\cong \cQ(X)\otimes \cQ(Y),    \]
that is, we prove our claim for the case of $X\cong \cQ^{-1}(D^s(L_s))$ and  $Y \cong \cQ^{-1}(D^t(M_t))$. The commutativity of the square  \eqref{mapoftensors} implies that the bottom vertical maps must also coincide with the tensor product $L_*\otimes M_*$, \ie,
\[ Q(i^*\pr_!(X^{\prime}\wedge^{\bbL} Y^{\prime}))\cong L_*\otimes M_* \]
and the following diagram commutes degreewise.
\[\begin{tikzcd}
Q(i^*\pr_!(X\wedge^{\bbL} Y) ) \arrow{r} \arrow{d} &  \cQ(i^*\pr_!(X^{\prime}\wedge^{\bbL} Y^{\prime})) \arrow{d} \\
D^s(L_s)\otimes D^t(M_t)   \arrow{r}                &   L_*\otimes M_*  
\end{tikzcd}
\]
The horizontal maps are indeed maps of dg-objects, so if we can show that the left hand vertical map is too, then the claim follows for the general $L_*$ and $M_*$. The proof of the former will occupy the next subsection. 

\subsection{Differentials for Disks}

To prove the claim for disks, we discuss a 
crowned diagram that corresponds to the disks.
By the \cite[Proposition 3.2.1]{PA12}, 
%hence 
there is an object $A\in \Ho(\cM)$, such that $F_*A\in \cB[s-1]=L_s,$ which is due to the fact that our assumptions force the corresponding Adams spectral sequence to collapse.  
%with value the part of \eqref{diskchain} that belongs to $\cB[s-1]$. 
Consider the following crowned diagram 
\[
X= \begin{gathered}\xymatrix{&     &  \ast                &   A                    &  \ast   & \ldots   \\
&\ldots \ar[ur] & \ast \ar[u]  \ar[ur] &  A \ar@{=}[u] \ar[ur]  &  \ast \ar[u]  \ar[ur]    &              
}\end{gathered}
\]
where the non-trivial entries are at the $(s-1)$-spot, \ie, 
\[
X_{\beta_{s-1}}=X_{\zeta_{s-1}}=A.
\]
%and $k_{s-1}\colon \Xb{s-2}\to \Xz{s-1}$ and $k_{s}\colon \Xb{s-1}\to \Xz{s}$ are the maps $k_{s-1}\colon *\to A $ and  $k_s\colon A\to \ast$ respectively. 
The diagram $X$ is in $\cL$ since
\[B_*(X)=B^{(s-1)}(X)=F_*\Xb{s-1}=F_*A \,\,\,\,\mbox{and}\,\,\,\,Z_*(X)=Z^{(s-1)}(X)= F_*\Xz{s-1}=F_*A  .\]
Next, we calculate $(C_*(X),d)\in \twc$. The only nontrivial cones are $\cone(k_{s-1})$ and $\cone(k_s)$. This means that
\begin{align*}
C^{(s)}(X)&= F_*\cone(k_s)= F_* \cone( A\lra \ast )= F_*(\Sigma A)\cong (F_*A)[1],   \\
C^{(s-1)}(X)&= F_* \cone(k_{s-1})= F_*\cone(*\lra A)=F_*A, \\
C_*(X)&=C^{(s-1)}(X)\oplus C^{(s)}(X).
\end{align*}
We obtain that
$\lambda\colon B_*(X)\to Z_*(X)$ is the identity map, $\iota\colon Z_*(X)\to C_*(X) $ is inclusion to the first factor and $\rho\colon C_*(X)\to B_*(X)$ is the projection to the second factor.
It follows that $d\colon C_*(X)\to C_*(X)[1]$ is the identity. Similarly, $D^t(L_t)$ is mapped to a crowned diagram $Y$ in which 
\[\Yb{t-1}= \Yz{t-1}=\tA\] where 
the only non-trivial morphism is the identity. %Like above $\tk_{t-1}\colon \Yb{t-2}\to \Yz{t-1}$ is the initial map $*\to \tA$ and $\tk_t\colon \Yb{t-1}\to \Yz{t}$ is the terminal map $\tA\to * .$

We now have the ingredients to deal with the following proposition. 
\begin{prop}\label{differentialsdisks}
Let $X$ and $Y$ be the objects of $\cL$ of the form $\mathcal{Q}^{-1}(D^s(L_s))$ and $\mathcal{Q}^{-1}(D^t(M_t))$. Then
\[ \cQ(i^*\bbL\pr_!(X\wedge^{\bbL}Y)) \cong \left(C_*(X)\otimes C_*(Y),d_{\otimes} \right)  \]
where $(C_*(X)\otimes C_*(Y), d_{\otimes})$ is the tensor product of $C_*(X)$ and $C_*(Y)$ in $\twc$.
\end{prop}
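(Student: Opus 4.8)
The plan is to exploit the fact that $X$ and $Y$ are supported at a single index of the crown, so that every object occurring is a finite wedge of suspensions of $A\wedge^{\bbL}\tA$ sitting in one degree modulo $N$, and all the short exact sequences that enter the construction of $\cQ$ split. Write $P:=F_*A$ and $\widetilde{P}:=F_*\tA$, both of which are projective since $P\cong L_s$ and $\widetilde{P}\cong M_t$. Recall from the discussion preceding the proposition that $\cQ(X)=(C_*(X),d_X)$ has $C^{(s-1)}(X)=P$, $C^{(s)}(X)=P[1]$ and all other $C^{(i)}(X)=0$, with $\lambda^{(s-1)}$, $\iota^{(s-1)}$, $\rho^{(s)}$ the identity maps and $d_X$ the evident identification $C^{(s)}(X)\xrightarrow{\ \cong\ }C^{(s-1)}(X)[1]$; symmetrically for $\cQ(Y)$, with $\widetilde{P}$ and $t$ in place of $P$ and $s$. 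Forming the tensor product in $\twc$ then gives that $(C_*(X)\otimes C_*(Y),d_{\otimes})$ is the elementary (acyclic) twisted complex concentrated in degrees $s+t$, $s+t-1$ (doubled), $s+t-2$ modulo $N$,
\[
(P\otimes\widetilde{P})[2]\ \longrightarrow\ (P\otimes\widetilde{P})[1]\oplus(P\otimes\widetilde{P})[1]\ \longrightarrow\ P\otimes\widetilde{P},
\]
all components of $d_{\otimes}$ being $\pm$ the identity. By Corollary \ref{homoofnewcones} the underlying graded object of $\cQ(i^*\bbL\pr_!(X\wedge^{\bbL}Y))$ is already $C_*(X)\otimes C_*(Y)$, so the content of the proposition is the matching of the differentials.

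The first step is to compute $i^*E$, where $E=\bbL\pr_!(X\wedge^{\bbL}Y)$. As every vertex of $X$ other than $\Xb{s-1}=\Xz{s-1}=A$ is weakly trivial, and likewise for $Y$, the external product $X\wedge^{\bbL}Y\in\Ho(\cM^{\cC_N\times\cC_N})$ is weakly trivial away from the four objects $(\alpha_{s-1},\alpha'_{t-1})$ with $\alpha,\alpha'\in\{\beta,\zeta\}$, at each of which it equals $A\wedge^{\bbL}\tA$ with all structure maps identities. Feeding this into the slice formulas \eqref{value1}--\eqref{value3}, using the explicit shapes of $\pr/\beta_n$, $\pr/\gamma_n$ and $\pr/\zeta_n$ from (\ref{zetan})--(\ref{betan}) and the cofinal subposet $J_n$ of Remark \ref{thetanisrightadjoint}, one checks that $E_{\alpha_n}\cong\ast$ unless $n\in\{s+t-2,s+t-1,s+t\}$ and reads off the finitely many nontrivial vertices and structure maps of $i^*E$. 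Since $P$ and $\widetilde{P}$ are projective, Corollary \ref{pullbackviaiisine} gives $i^*E\in\cL$, so $\cQ(i^*E)$ is defined; and by Proposition \ref{newconesE} its cones are $\cone(i^*E_{\beta_{n-1}}\to i^*E_{\zeta_n})\cong\bigvee_{i+j=n}\cone(k_i)\wedge^{\bbL}\cone(\tk_j)$, which here equals $A\wedge^{\bbL}\tA$, $(\Sigma A\wedge^{\bbL}\tA)\vee(A\wedge^{\bbL}\Sigma\tA)$ and $\Sigma A\wedge^{\bbL}\Sigma\tA$ for $n=s+t-2$, $s+t-1$, $s+t$ respectively.

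It remains to match the differential $d=\iota[1]\circ\lambda[1]\circ\rho$ of $\cQ(i^*E)$ — assembled from the long exact sequences of the triangles $i^*E_{\beta_{n-1}}\to i^*E_{\zeta_n}\to\cone(i^*E_{\beta_{n-1}}\to i^*E_{\zeta_n})\to\Sigma i^*E_{\beta_{n-1}}$ — with $d_{\otimes}$. For disks every object in these sequences is concentrated in a single degree modulo $N$ and the short exact sequences \eqref{sexci} split, so the long exact sequences collapse to the elementary triangles of the pushout--products $k_i\boxprod^{\bbL}\tk_j$. I would then invoke the naturality of the splitting in Proposition \ref{newconesE} and of the cone isomorphism of Corollary \ref{conemonoidal}, together with the identity forms of $\lambda^{(\bullet)}$, $\iota^{(\bullet)}$, $\rho^{(\bullet)}$ for $X$ and $Y$ recalled above, to identify $\lambda$, $\iota$ and $\rho$ for $i^*E$ — through the K\"unneth isomorphisms of Corollary \ref{homoofnewcones} — with the corresponding pushout and tensor products of the maps for $X$ and $Y$; composing them yields $d_X\otimes 1\pm 1\otimes d_Y=d_{\otimes}$. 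The main obstacle will be exactly this last step: upgrading the naturality of Proposition \ref{newconesE} so that it tracks \emph{maps} and not merely objects (so that $\iota$ and $\rho$ for $i^*E$ genuinely are the expected products), and checking that the Koszul sign produced by the cone of a pushout--product in Corollary \ref{conemonoidal} is precisely the one appearing in $d_{\otimes}$ — a sign ultimately controlled by the symmetry isomorphism of $(\cM,\wedge)$ and the pushout--product axiom.
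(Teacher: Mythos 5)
Your setup is correct and matches the paper's: the disks are realised by the crowned diagrams supported at a single spot, $C_*(X)\otimes C_*(Y)$ is the three\mbox{-}term complex you describe, Corollary \ref{homoofnewcones} already gives the underlying graded object, and the whole content is the identification of $d=\iota[1]\lambda[1]\rho$ on $\cQ(i^*E)$ with $d_{\otimes}$. You have also correctly located where the difficulty sits. But the proposal does not actually close that step, and the route you sketch for it --- ``invoke the naturality of the splitting in Proposition \ref{newconesE} and of Corollary \ref{conemonoidal} so that $\iota$ and $\rho$ for $i^*E$ are the expected products'' --- is not available as stated: Proposition \ref{newconesE} identifies the \emph{objects} $\cone(\widehat{k}_n)$, and no amount of naturality of that identification by itself tells you what the boundary map $\cone(\widehat{k}_{s+t})\to\Sigma E_{\gamma_{s+t-1}}$ and the inclusion $\Sigma E_{\zeta_{s+t-1}}\to\Sigma\cone(\widehat{k}_{s+t-1})$ do; these are extra data attached to the triangles, and they must be computed. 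Your own closing sentence concedes this, so the heart of the proof is missing.

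What the paper does instead is a direct computation of the composite \eqref{compositionforiE} for the disks. First it evaluates all relevant vertices ($E_{\zeta_{s+t-1}}\cong E_{\gamma_{s+t-1}}\cong\Sigma A\wedge\tA$, $E_{\gamma_{s+t}}\cong\ast$, $E_{\gamma_{s+t-2}}\cong A\wedge\tA$) from the slice categories, exactly as you propose. Then it identifies the first map from the explicit mapping-cone pushout square of Definition \ref{mappingcone} (it is the canonical identification $\Sigma A\wedge\Sigma\tA\cong\Sigma^2(A\wedge\tA)$ because $E_{\zeta_{s+t}}\cong\ast$), observes $\widehat{l}_{s+t-1}$ is the identity, and --- the decisive point --- writes $E_{\zeta_{s+t-1}}\to\cone(\widehat{k}_{s+t-1})$ as a map of homotopy pushouts
\[
\hocolim(\ast\leftarrow A\wedge\tA\rightarrow\ast)\lra\hocolim\bigl(A\wedge\Sigma\tA\leftarrow\ast\rightarrow\Sigma A\wedge\tA\bigr)
\]
obtained by restricting $\cone(\varepsilon_{X\wedge^{\bbL}Y})$ to $J_{s+t-1}$. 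Lemma \ref{diagonal} then identifies this, after composing with the twist $\tau\colon A\wedge S^1\wedge\tA\to S^1\wedge A\wedge\tA$ on one wedge summand, with the diagonal $\Sigma(A\wedge\tA)\to\Sigma(A\wedge\tA)\vee\Sigma(A\wedge\tA)$; the twist is precisely what produces the Koszul sign you worry about. Finally, the paper does not compute the second differential $C^{(s+t-1)}\to C^{(s+t-2)}$ at all: it is forced by $d[1]\circ d=0$ on $C_*(i^*E)$. To repair your argument, replace the appeal to naturality by this explicit identification of the three maps in \eqref{compositionforiE} and cite Lemma \ref{diagonal} for the diagonal.
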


\begin{proof}

We note that the tensor product $D^s(L_s)\otimes D^t(M_t)$ is concentrated in degrees $s+t, s+t-1,$ and  $s+t-2$ modulo $N$. As we already know that our chain complexes agree degreewise, these are the only degrees where we have to calculate our differential. As usual, we write $E=\bbL\pr_!(X\wedge^{\bbL}Y).$

We will work out the differential in the chain complex $\cQ(i^*\bbL\pr_!(X\wedge^{\bbL}Y))$, beginning with
\[
\cQ(i^*E)^{s+t} = C^{(s+t)}(i^*E)\to C^{(s+t-1)}(i^*E)=\cQ(i^*E)^{s+t-1},
\]
and we will discuss the other degree $$C^{(s+t-1)}(i^*E)\to C^{(s+t-2)}(i^*E) $$ afterwards. Our proof is divided into the following steps.

We start by going through the definition $\cQ$ applied to $i^*E$ using the descriptions given in Section \ref{Frankefunctor}, where we will arrive at the exact triangle
\[
\cone(\widehat{k}_{s+t}) \lra \Sigma E_{\gamma_{s+t-1}}\lra \Sigma E_{\zeta_{s+t-1}}\lra \Sigma\mathrm{cone}(\widehat{k}_{s+t-1}).
\]
The next steps separately determine $E_{\zeta_{s+t-1}}$ followed by the maps $\cone(\widehat{k}_{s+t})\lra \Sigma \Eg{s+t-1}$, $\Eg{s+t-1}\lra \Ez{s+t-1}$ and $\Ez{s+t-1}\lra \cone(\widehat{k}_{s+t-1})$. Putting those together, we obtain the desired differential.

\bigskip
\noindent\fbox{%
    \parbox{35em}{%
       {\bf Step 1: Recalling the construction of} $\cQ(i^*E)^{s+t} \longrightarrow \cQ(i^*E)^{s+t-1}$.
    }%
}

\bigskip
 By Proposition \ref{propA} and Proposition \ref{newconesE}
we can construct a diagram $E=\bbL\pr_!(X\wedge^{\bbL}Y)\in \Ho(\cM^{\cD_N})$ such that 
\begin{equation*}
i^*E\in \cL \ \   \mbox{and} \ \    \cone(i^*E_{\beta_{n-1}}\lra i^*E_{\zeta_n})\cong \bigvee_{i+j=n} \cone(k_i)\wedge \cone(\widetilde{k}_j).
\end{equation*} 
For notational convenience we will write 
\begin{equation*}
\widehat{k}_n\colon i^*\Eb{n-1}\lra i^*\Ez{n} \ \ \text{and} \ \ \widehat{l}_n\colon i^*\Eb{n}\lra \Ez{n}
\end{equation*}
for the structure maps of the crowned diagram $i^*E$. We briefly recall the construction of the differential 
\[ d\colon C_*(i^*E)\lra  C_*(i^*E)[1] \ \ d=\iota[1]\lambda[1]\rho C_*(i^*E)\lra C_*(i^*E)[1]. \]
Degreewise, the differential on $C^{(n)}(i^*E)\to C^{(n-1)}(i^*E)$ is given by applying $F_*(-)$ to the sequence of maps
%\begin{equation*} 
%\cone(\widehat{k}_n) \lra \Sigma i^*E_{\beta_{n-1}}\lra \Sigma i^*E_{\zeta_{n-1}}\lra \Sigma\mathrm{cone}(\widehat{k}_{n-1}).
%\end{equation*}
%This is the same as the composition of maps
\begin{equation}\label{compositionforiE}
\cone(\widehat{k}_n) \lra \Sigma E_{\gamma_{n-1}}\lra \Sigma E_{\zeta_{n-1}}\lra \Sigma\mathrm{cone}(\widehat{k}_{n-1}).
\end{equation}
Therefore, we have to show that for $n=s+t$ the sequence of maps \eqref{compositionforiE} after applying $F_*(-)$ gives us the differential of the tensor product of disks. By Proposition 
\ref{newconesE}, 
we have
\begin{align*}
\cone(\widehat{k}_{s+t})&\cong \cone(k_s)\wedge^{\bbL} \cone(\tk_t)    \\
\cone(\widehat{k}_{s+t-1})&\cong \left(\cone(k_{s-1})\wedge^{\bbL} \cone(\tk_{t})\right) \vee \left(\cone(k_{s})\wedge^L \cone(\tk_{t-1})\right).
\end{align*}
Recall that $A=X_{\beta_{s-1}}=X_{\zeta_{s-1}}$ and $\tA=Y_{\beta_{s-1}}=Y_{\zeta_{s-1}}$ as before.
Directly from the structure morphisms of the crowned diagrams $X$ and $Y$ we have
 \begin{align*}
\cone(\widehat{k}_{s+t}) &\cong (\Sigma A) \wedge (\Sigma\tA)  \\
\cone(\widehat{k}_{s+t-1})&\cong  \left(A\wedge \Sigma\tA\right) \vee \left(\Sigma A\wedge \tA \right).
\end{align*}
To analyse the sequence of maps \eqref{compositionforiE} it remains to calculate $\Eg{s+t-1}, \Ez{s+t-1}, \Ez{s+t}$ and the maps $E_{\gamma_{s+t-1}}\to E_{\zeta_{s+t-1}}$.
The maps
\begin{align}
\cone(\widehat{k}_{s+t})&\lra \Sigma \Eg{s+t-1} \label{prwtomap}   \\
\Sigma \Ez{s+t-1}         &\lra \Sigma\mathrm{cone}(\widehat{k}_{s+t-1}) \label{deuteromap}
\end{align}
are the canonical maps that are given by construction of distinguished triangles in a simplicial stable model category. The map \eqref{prwtomap} is the canonical map 
\[\cone(\widehat{k}_{s+t})\lra S^1\wedge E_{\gamma_{s+t-1}}\] 
see Definition \ref{elementary}. Similarly, the map 
\eqref{deuteromap} is the suspension of the canonical map
\[\Ez{s+t-1}\lra \cone(\widehat{k}_{s+t-1},)\]
see Definition \ref{elementary}.

\bigskip
\noindent\fbox{%
    \parbox{15em}{%
       {\bf Step 2: Calculating $E_{\zeta_n}$.}
    }%
}

\bigskip
To compute the above, let us recall from (\ref{subposetzetan}) the poset $J_n$ with inclusion $\theta\colon J_n\hookrightarrow \pr/\zeta_n$ and left adjoint $L\colon \pr/\zeta_n\to J_n$. 
For $i+j\equiv n$ modulo $N$ the poset $J_n$ looks as below.
\begin{equation*}
\begin{tikzpicture}[scale=1.7]
\node (zero) at (0,1) {$(\zeta_i,\zeta_j)$};
\node (one) at (1,0) {$(\beta_i,\beta_{j-1})$};
\node (two) at (2,1) {$(\zeta_{i+1},\zeta_{i-1})$};
\node (three)at ( 3,0) {$\ldots$};
\node (four) at (-1,0) {$(\beta_{i-1},\beta_j)$};
\node (five) at (-2,1) {$(\zeta_{i-1},\zeta_{i+1})$};
\node (six) at (-3,0) {$\ldots$};
\draw [->] (one)-- (zero);
\draw [->] (one)-- (two);
\draw [->] (three)-- (two);
\draw [->] (four)-- (zero);
\draw [->] (four)-- (five);
\draw [->] (six)-- (five);
\end{tikzpicture}
\end{equation*}
Also, recall from  (\ref{gamman}) the slice category $\pr/\gamma_n$, which for $i+j\equiv n$ looks as follows.
\begin{equation*}
\begin{tikzpicture}[scale=1.7]
\node (zero)at (0,0) {$(\beta_i,\beta_j)$};
\node (one) at (1,1) {$(\beta_i,\zeta_j)$};
\node (two) at (-1,1){$(\zeta_i,\beta_j)$};
\node (three) at (2,0){$(\beta_i,\beta_{j-1})$};
\node (four)  at (-2,0){$(\beta_{i-1},\beta_j)$};
\node (five) at  (3,1){$(\zeta_{i+1},\beta_{j-1})$} ;
\node (six) at   (-3,1) {$(\beta_{i-1},\zeta_{i+1})$};
\node (seven) at  (4,0) {$\ldots$};
\node (eight) at (-4,0) {$\ldots$};
\draw [->] (zero)-- (one) ;
\draw [->] (zero)-- (two);
\draw [->](three)-- (one);
\draw[->] (four)-- (two);
\draw [->](three)-- (five);
\draw [->](four)--(six);
\draw [->](seven)-- (five);
\draw [->](eight)--(six);
\end{tikzpicture}
\end{equation*}
By definition of homotopy left Kan extensions, we have
\begin{align*}
E_{\zeta_{n}}&=\hocolim_{\pr/\zeta_{n}} X\wedge^{\bbL} Y\cong \hocolim_{J_{n}} \theta^*(X\wedge^{\bbL}Y), \\
E_{\gamma_{n}}&=\hocolim_{\pr/\gamma_{n}}(X\wedge^{\bbL}Y).
\end{align*}
The maps 
\[\Eg{n-1}\lra \Ez{n} \ \ \text{and} \ \ \Eg{n-1}\lra \Ez{n-1} \]
are the maps of homotopy colimits induced by the respective map of posets 
\[\psi\colon \pro/\gamma_{n-1}\lra \pro/\zeta_{n} \ \ \text{and} \ \ \phi\colon \pro/\gamma_{n-1}\lra \pro/\zeta_{n-1}.\]

We start with calculating $\Ez{s+t-1}$. The underlying diagram $\theta^*(X\wedge^{\bbL}Y)\in \Ho(\cM^{J_{s+t-1}})$ is
\begin{equation}\label{Jn1}
\xymatrix{
   &\ast&  &\ast&   &\ast&                   \\
 \ldots \ar[ur]  && A\wedge\tA \ar[ur] \ar[ul]  &&\ast \ar[ur] \ar[ul]   &&\ldots \ar[ul]
}
\end{equation}
where there only non-trivial entry is at $(\beta_{s-1},\beta_{t-1})$. From the diagram above we get
\begin{equation*}
\Ez{s+t-1}=\hocolim_{\pr/\zeta_{s+t-1}}(X\wedge^{\bbL}Y)\cong\hocolim_{J_{s+t-1}}\theta^*(X\wedge^{\bbL}Y)\cong \Sigma A\wedge \tA.
\end{equation*}
We do the same for $\Eg{s+t}, \Eg{s+t-1}$ and $\Eg{s+t-2}$. 
The value $\Eg{s+t-2}$ is the homotopy colimit of the diagram $X\wedge^{\bbL} Y\in \Ho(\cM^{\pr/\gamma_{s+t-2}})$ which is
\begin{equation*}\label{Egamman2}
\xymatrix{
&\ast& &A\wedge \tA& &A\wedge \tA& &\ast&  \\
\ldots \ar[ur]& &\ast \ar[ur] \ar[ul]& &A\wedge \tA \ar@{=}[ur] \ar@{=}[ul]& &\ast \ar[ur] \ar[ul]& & \ldots  \ar[ul]}
 \end{equation*}
with non-trivial entries at the places $(\beta_{s-1}, \beta_{t-1})$ on the bottom, $(\zeta_{s-1},\beta_{t-1})$ on the left and $(\beta_{s-1},\zeta_{t-1})$ on the right. Thus, $\Eg{s+t-2}\cong A\wedge \tA.$ Similarly, we have  
\begin{align*}
\Eg{s+t} &=  \hocolim_{\pr/\gamma_{s+t}}(X\wedge^{\bbL} Y)\cong * \\
\Eg{s+t-1}&= \hocolim_{\pr/\gamma_{s+t-1}}(X\wedge^{\bbL} Y)\cong \Sigma A\wedge\tA.
\end{align*}

\bigskip
\noindent\fbox{%
    \parbox{25em}{%
       {\bf Step 3: Calculating $\cone(\widehat{k}_{s+t})\lra \Sigma \Eg{s+t-1}$.}
    }%
}

\bigskip
We move on to calculate the map $\cone(\widehat{k}_{s+t})\lra S^1\otimes \Eg{s+t-1}$. From Definition \ref{elementary} we have the pushout square
\begin{equation*}
\xymatrix{ \Eg{s+t-1} \ar[r]^{\widehat{k}_{s+t}} \ar[d]  & \Ez{s+t}    \ar[d]    \ar@/^/[rdd]^{\ast} &     \\
          (I,0)\otimes \Eg{s+t-1}        \ar[r] \ar@/_/[rrd]_{\pi\wedge 1}      & \cone(\widehat{k}_{s+t}) \ar@{.>}[dr] &     \\
                                                   &                      & S^1\otimes \Eg{s-t-1} 
}
\end{equation*}
which, based on our computations, is the following.
\begin{equation*}
\xymatrix{
\Sigma A\wedge \tA \ar[d] \ar[r] & \ast         \ar[d] \ar@/^/[rdd]     & \\
  (I,0)\otimes \Sigma(A\wedge \tA) \ar@/_/[drr]_{\pi\wedge 1} \ar[r]  & \cone(\widehat{k}_{s+t}) \ar@{.>}[dr]  & \\    
                                  &                     & S^1\otimes (\Sigma A\wedge \tA)  }            
\end{equation*}
Recall from Proposition \ref{newconesE}, \eqref{hocolimofcone}, and Corollary \ref{conemonoidal} that there is a series of canonical isomorphisms
\[\cone(\widehat{k}_{s+t})\cong \cone(k_s\boxprod^{\bbL}\tk_t )\cong \cone(k_s)\wedge^{\bbL} \cone(\tk_t). \]
In our particular case, in which $k_s\colon A\to A$ and $\tk\colon \tA\to *$, this is
\[\cone(\widehat{k}_{s+t})\cong \cone(k_s\boxprod^{\bbL} k_t)\cong \Sigma^2A\wedge\tA \cong \Sigma A\wedge \Sigma\tA\cong\cone(k_s)\wedge^{\bbL} \cone(\tk_t).\]
This implies that the universal map out of the pushout is the identity map. Thus, the map 
\begin{equation*}
\cone(\widehat{k}_{s+t})\lra \Sigma \Eg{s+t-1}
\end{equation*} 
is the map $\Sigma A\wedge \Sigma \tA \to \Sigma^2 (A\wedge \tA)$ which is the composition of the canonical map (commutation of colimits) and the identity map. 

\bigskip
\noindent\fbox{%
    \parbox{25em}{%
       {\bf Step 4: Calculating $\widehat{l}_{s+t-1}\colon \Eg{s+t-1}\lra \Ez{s+t-1}$.}
    }%
}

\bigskip
From the posets above we can see 
directly that the map  
\begin{equation*}
\widehat{l}_{s+t-1}\colon \Eg{s+t-1}\lra \Ez{s+t-1}
\end{equation*} is the identity map induced by 
\begin{equation*}
\psi\colon \pr/\gamma_{s+t-1}\lra \pro/\zeta_{s+t-1}.
\end{equation*} 
Therefore the map 
\begin{equation*}
\Sigma\widehat{l}_{s+t-1}\colon \Eg{s+t-1}\lra \Sigma\Ez{s+t-1}
\end{equation*} 
is the identity map 
\begin{equation*}
1\colon \Sigma^2(A\wedge \tA)\lra \Sigma^2(A\wedge \tA).
\end{equation*}

\bigskip
\noindent\fbox{%
    \parbox{25em}{%
       {\bf Step 5: Calculating $\Ez{s+t-1}\lra \cone(\widehat{k}_{s+t-1})$.}
    }%
}

\bigskip
Lastly it remains to figure out the map 
\begin{equation*}
\Ez{s+t-1}\lra \cone(\widehat{k}_{s+t-1}).
\end{equation*} 
Recall from the proof of Proposition \ref{newconesE} that $\cone(\widehat{k}_{s+t-1})$ can be written as a homotopy colimit
\begin{equation*}
\cone(\widehat{k}_{s+t-1})\cong \hocolim_{\pr/\zeta_{s+t-1}}\left(\cone(\varepsilon_{X\wedge^{\bbL}Y} ) \right),
\end{equation*}
where $\phi\colon \pr/\gamma_{s+t-2}\to \pr/\zeta_{s+t+1} ,$ and $\varepsilon $ is the counit of the derived adjunction $(\bbL\phi_!,\phi^*)$. Pulling back the diagram 
$\cone(\varepsilon_{X\wedge^{\bbL}Y})$ to $J_{s+t-1}$ along the inclusion $\theta\colon J_{s+t-1}\to \proz{s+t-1},$ we obtain the diagram
\begin{equation*}
\begin{tikzpicture}[scale=1.7]
\node (zero) at (0,1) {$ \Sigma A\wedge \tA$};
\node (one) at (1,0) {$\ast$};
\node (two) at (2,1) {$\ast$};
\node (three)at ( 3,0) {$\ldots$ };
\node (four) at (-1,0) {$\ast$};
\node (five) at (-2,1) {$A\wedge \Sigma \tA   $};
\node (six) at (-3,0) {$\ldots$};
\draw [->] (one)-- (zero);
\draw [->] (one)-- (two);
\draw [->] (three)-- (two);
\draw [->] (four)-- (zero);
\draw [->] (four)-- (five);
\draw [->] (six)-- (five);
\end{tikzpicture}
\end{equation*}
with non-trivial entries at $(\zeta_{s-1},\zeta_t)$ and $(\zeta_{s},\zeta_{t-1})$ respectively. Recall the following diagram from \eqref{Jn1} 
$\theta^*(X\wedge^{\bbL}Y) \in \Ho(\cM^{J_{s+t-1}})$
\begin{equation*}
\begin{tikzpicture}[scale=1.7]
\node (zero)at (0,0) {$\ast$};
\node (one) at (1,1) {$\ast$};
\node (two) at (-1,1){$\ast$};
\node (three) at (2,0){$\ast$};
\node (four)  at (-2,0){$A\wedge\tA$};
\node (five) at  (3,1){$\ast$} ;
\node (six) at   (-3,1) {$\ast$};
\node (seven) at  (4,0) {$\ldots$};
\node (eight) at (-4,0) {$\ldots$};
\draw [->] (zero)-- (one) ;
\draw [->] (zero)-- (two);
\draw [->](three)-- (one);
\draw[->] (four)-- (two);
\draw [->](three)-- (five);
\draw [->](four)--(six);
\draw [->](seven)-- (five);
\draw [->](eight)--(six);
\end{tikzpicture}
\end{equation*}
with only non-trivial entry at $(\beta_{s-1},\beta_{t-1})$, left top being $(\zeta_{s-1},\zeta_t)$ and right top being $(\zeta_s,\zeta_{t-1})$.
Because of the shape of the underlying posets and the map, we can safely ignore 
the trivial entries, so the map $\Eg{s+t-1}\to\cone(\widehat{k}_{s+t-1})$
 can be taken as the map of homotopy pushouts
\[\hocolim(\ast \leftarrow A\wedge \tA \rightarrow \ast  ) \lra \hocolim(A\wedge \Sigma \tA  \leftarrow \ast \lra \Sigma A\wedge\tA   )    ,\]
induced by the following map of posets.
\begin{equation*}
\xymatrix{\ast  \ar[d] & \ar[l] A\wedge \tA  \ar[r] \ar[d] & \ast \ar[d]   \\
A\wedge \Sigma \tA   & \ar[l]    \ast \ar[r]   & \Sigma A\wedge\tA 
}
\end{equation*}
Now consider the above map of diagrams and the following map at the bottom.
\begin{equation*}
\xymatrix{ \ast  \ar[d] & \ar[l] A\wedge \tA  \ar[r] \ar[d] & \ast \ar[d]   \\
A\wedge \Sigma \tA  \ar[d]_{\tau} & \ar[l]  \ar[d]  \ast \ar[r]  \ar[d]  & \Sigma A\wedge\tA   \ar@{=}[d]   \\
  \Sigma A\wedge\tA                  &    \ar[l]    \ast \ar[r]                      & \Sigma A\wedge\tA    }
\end{equation*}
Here, $\tau$ is the map 
\[ A\wedge \Sigma \tA = A\wedge (S^1\wedge \tA ) \cong (A\wedge S^1) \wedge \tA \xrightarrow{\tau} (S^1\wedge A)\wedge \tA\cong \Sigma A\wedge \tA\] 
and the first map is the associativity 
isomorphism. By Lemma \ref{diagonal} the induced map 
of homotopy colimits is, up to weak equivalence, the diagonal map 
\[\operatorname{diag}\colon \Sigma A\wedge \tA\lra (\Sigma A\wedge \tA) \vee (\Sigma A\wedge \tA).\] 
Hence, the map \eqref{compositionforiE} is up to weak equivalence 
the diagonal map but with a sign introduced by the twist map as above. This  and Corollary \ref{homoofnewcones} imply that that indeed the differential
\[d\colon C^{(s+t)}(i^*E)\lra C^{(s+t-1)}(i^*E)\] coincides with the differential of the tensor product of
\[\left((D^sL^s) \ox (D^t\widetilde{L}^t) \right)^{s+t} \lra \left((D^sL^s) \ox (D^t\widetilde{L}^t) \right)^{s+t-1}.\]

\bigskip
\noindent\fbox{%
    \parbox{20em}{%
       {\bf Step 6: $\cQ(i^*E)^{s+t-1} \longrightarrow \cQ(i^*E)^{s+t-2}$.}
    }%
}

\bigskip
We do not need to do any extra work to determine the other differential, namely to check that the differential
\[ C^{(s+t-1)}(i^*E)\lra C^{(s+t-2)}(i^*E)  \] 
coincides with the differential
\[\left((D^sL^s) \ox (D^t\widetilde{L}^t) \right)^{s+t-1} \lra \left((D^sL^s) \ox (D^t\widetilde{L}^t) \right)^{s+t-2},\]
since by construction $(C_*(i^*E),d)$ is a differential graded object and that means that by necessity $d[1]\circ d=0$ on $C_*(i^*E)$. This concludes the proof.
\end{proof}

To conclude this section, by combining Corollary \ref{homoofnewcones} and Proposition \ref{differentialsdisks} we have proved the following proposition.
\begin{prop}\label{propB}
Let $X,Y\in \cL$ such that $F_*(X_{{\alpha}_n}), F_*(Y_{{\alpha}_n})\in  \cAp$ for any $n\in \bbZ/N\bbZ$ and any $\alpha\in \left\{\beta,\zeta\right\}$. There is a natural isomorphism
\[
\cQ(i^*\bbL\pr_!(X\wedge^{\bbL}Y ))\cong \cQ(X)\otimes \cQ(Y) .
\]
\end{prop}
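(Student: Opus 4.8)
The plan is to package the proof from the two halves of Theorem~\ref{theoremA} that have already been established, using the reduction-to-disks device of Subsection~\ref{differentials} to pass from special diagrams to arbitrary $X,Y$. Write $E=\bbL\pr_!(X\wedge^{\bbL}Y)$. By Corollary~\ref{homoofnewcones}, the hypothesis $F_*(X_{\alpha_n}),F_*(Y_{\alpha_n})\in\cAp$ forces each $F_*X_{\alpha_n}$ (hence, via the short exact sequence \eqref{sexci}, each $C^{(i)}(X)$) to be projective in $\cA$, and yields a canonical identification of underlying graded objects
\[
C_*(i^*E)=\bigoplus_{n\in\bbZ/N\bbZ}C^{(n)}(i^*E)\;\cong\;\bigoplus_n\bigoplus_{i+j=n}C^{(i)}(X)\otimes C^{(j)}(Y),
\]
which is exactly the underlying graded object of $C_*(X)\otimes C_*(Y)$. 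So the only remaining task is to check that under this identification the twisted differential $d=\iota[1]\lambda[1]\rho$ on $C_*(i^*E)$ coincides with the tensor-product differential $d_\otimes$ of $C_*(X)\otimes C_*(Y)$ in $\twc$; this, together with naturality of the functors $\cQ$, $i^*\bbL\pr_!(-\wedge^{\bbL}-)$ and $-\otimes-$ involved, gives the desired natural isomorphism $\cQ(i^*\bbL\pr_!(X\wedge^{\bbL}Y))\cong\cQ(X)\otimes\cQ(Y)$.

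First I would reduce to disks, following Subsection~\ref{differentials}. Since $\cQ\colon\cL\to\twc$ is an equivalence we may assume $L_*:=\cQ(X)$ and $M_*:=\cQ(Y)$ are degreewise projective; each comes with the canonical map $f_{L,s}\colon D^s(L_s)\to L_*$ (resp.\ $g_{M,t}\colon D^t(M_t)\to M_*$) from a disk, and these maps are componentwise surjective. Transporting along $\cQ^{-1}$ produces morphisms of crowned diagrams realizing $f_{L,s}$ and $g_{M,t}$, and applying $i^*\bbL\pr_!(-\wedge^{\bbL}-)$ gives a commuting square of graded morphisms
\[
\cQ\bigl(i^*\pr_!(X\wedge^{\bbL}Y)\bigr)\lra\cQ\bigl(i^*\pr_!(X'\wedge^{\bbL}Y')\bigr),\qquad D^s(L_s)\otimes D^t(M_t)\lra L_*\otimes M_*,
\]
compatibly with the identifications of underlying graded objects, where $X,Y$ now denote the crowned diagrams corresponding to $D^s(L_s)$ and $D^t(M_t)$. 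Because the bottom map of this square is a chain map and is componentwise surjective, a graded morphism out of $\cQ(i^*\pr_!(X\wedge^{\bbL}Y))$ will automatically be a chain map as soon as its counterpart over the disks is; hence the general statement follows once the disk case is settled.

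For the disk case, which is Proposition~\ref{differentialsdisks}, I would proceed by explicit computation. By \cite[Proposition~3.2.1]{PA12} the disk $D^s(L_s)$ is realized by the crowned diagram with $\Xb{s-1}=\Xz{s-1}=A$ (all other vertices trivial, the nonidentity structure map being $\mathrm{id}_A$), where $F_*A\cong L_s\in\cB[s-1]$; similarly for $D^t(M_t)$ from $\tA$. The tensor product $C_*(X)\otimes C_*(Y)$ is then concentrated in the three consecutive degrees $s+t,\,s+t-1,\,s+t-2$ modulo $N$, so $C_*(i^*E)$ has at most two nonzero differentials. I would compute $\Ez{n}$ and $\Eg{n}$ for $n=s+t,s+t-1,s+t-2$ from Proposition~\ref{holanformula}, using the subposet $J_n\subseteq\pr/\zeta_n$ and its left adjoint retraction from Remark~\ref{thetanisrightadjoint}; for these particular diagrams the relevant homotopy colimits degenerate (e.g.\ $\Ez{s+t-1}\cong\Sigma A\wedge^{\bbL}\tA$ and $\Eg{s+t-2}\cong A\wedge^{\bbL}\tA$). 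Feeding this into the defining zig-zag \eqref{compositionforiE}
\[
\cone(\widehat k_{s+t})\lra\Sigma \Eg{s+t-1}\lra\Sigma \Ez{s+t-1}\lra\Sigma\cone(\widehat k_{s+t-1})
\]
and using the identifications $\cone(\widehat k_{s+t})\cong\cone(k_s\boxprod^{\bbL}\tk_t)\cong\cone(k_s)\wedge^{\bbL}\cone(\tk_t)$ of Proposition~\ref{newconesE} and Corollary~\ref{conemonoidal}, the composite reduces to an explicit map of smashes of $A$, $\tA$ and spheres which, by Lemma~\ref{diagonal}, is the diagonal $\Sigma A\wedge\tA\to(A\wedge\Sigma\tA)\vee(\Sigma A\wedge\tA)$ precomposed with the twist coming from the symmetry of $\wedge$. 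Applying $F_*$ and Corollary~\ref{homoofnewcones} then identifies the first differential of $C_*(i^*E)$ with that of the tensor product of disks, the twist producing exactly the Koszul sign $(-1)^{|x|}$. The second differential requires no separate argument: $(C_*(i^*E),d)$ is a twisted complex by construction, so $d[1]\circ d=0$ forces it to agree with $d_\otimes$ on the remaining summand as well.

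The main obstacle is the disk computation, and within it the bookkeeping of the twist $\tau$ — tracking which copy of $S^1$ lands in which smash factor when commuting smashes past suspensions, so that the induced sign agrees with the Koszul sign in $d_\otimes$ rather than its negative. Getting the three relevant slice categories $\pr/\gamma_n$, $\pr/\zeta_n$ (and the auxiliary $J_n$) correct for these values of $n$, and confirming that the homotopy colimits really do collapse as claimed, is the other delicate point, although it is essentially mechanical once the machinery of Subsections~\ref{spectralsequence} and~\ref{newcones} is available.
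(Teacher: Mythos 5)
Your proposal follows the paper's own proof essentially verbatim: Corollary \ref{homoofnewcones} for the identification of underlying graded objects, the reduction to disks of Subsection \ref{differentials}, the explicit disk computation of Proposition \ref{differentialsdisks} via the zig-zag \eqref{compositionforiE}, Corollary \ref{conemonoidal} and Lemma \ref{diagonal}, and the $d[1]\circ d=0$ argument for the remaining differential. The one imprecision is your claim that $f_{L,s}\otimes g_{M,t}$ is componentwise surjective --- it is not (in degree $s+t$ it is the inclusion of the $(s,t)$-summand of $L_*\otimes M_*$); what the reduction actually uses is that these images are jointly surjective as $(s,t)$ varies, so that naturality of the differentials against the whole family of disk maps determines $d$ on every summand.
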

\qed

\subsection{Technical lemmas}
In this subsection we prove two technical lemmas that are used in the the previous proofs. %We put them here so they don't disrupt the flow of the proofs. 
The first lemma is about the canonical map from the suspension of an object to the wedge product of suspensions in a stable, simplicial model category $\cM$. 
The second lemma is about pushout-products of injective morphisms in a hereditary abelian category $\cA$.

\begin{lemma}\label{diagonal}
Let $\cM$ be a stable simplicial model category and	let $X\in \cM$. Consider the following map of homotopy pushouts
	\[\hocolim(*\leftarrow X\lra *) \lra \hocolim(\Sigma X\leftarrow *\lra \Sigma X  ) .  \]
	Then the above map is, up to isomorphism in $\Ho(\cM)$, the diagonal map
	\[\operatorname{diag}\colon \Sigma X\lra \Sigma X \vee \Sigma X.  \]
\end{lemma}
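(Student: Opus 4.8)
The plan is to identify the map of homotopy pushouts with a more tractable one by exploiting the pasting law for homotopy pushouts. First I would replace the diagram $(\ast \leftarrow X \to \ast)$ by an explicit cofibrant model: take $X$ cofibrant in $\cM$, and model the two legs by the cofibrations $X \to CX$ into two copies of the cone $CX = (I,0)\otimes X$. Then $\hocolim(\ast \leftarrow X \to \ast) \cong CX \coprod_X CX$, which by definition is a model for $\Sigma X = S^1 \otimes X$, obtained by gluing two cones along their common boundary. The point is that this presentation exhibits $\Sigma X$ together with the two structure maps $\ast \to \Sigma X$ that factor through the two cones, and these are both canonically nullhomotopic.

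Next I would analyse the comparison map. The map in question is induced by a map of (homotopy pushout) diagrams from $(\ast \leftarrow X \to \ast)$ to $(\Sigma X \leftarrow \ast \to \Sigma X)$: on the middle term $X \to \ast$, and on each outer term $\ast \to \Sigma X$. The key observation is that the diagram $(\Sigma X \leftarrow \ast \to \Sigma X)$ is the pushout corner of $\ast \amalg \ast \to \Sigma X \amalg \Sigma X$, whose homotopy colimit is $\Sigma X \vee \Sigma X$; so I must show that the induced map $\Sigma X \to \Sigma X \vee \Sigma X$ is the diagonal. I would do this by the pasting law: present $\hocolim(\Sigma X \leftarrow \ast \to \Sigma X)$ as the iterated homotopy pushout built from $X$ by first forming $CX \amalg_X CX \simeq \Sigma X$ and then coning off, so that the whole comparison is a composite of homotopy pushout squares; tracking the two inclusions $\Sigma X \to \Sigma X \vee \Sigma X$ through this composite shows each factors as one of the two structure maps, hence the total map is the fold/diagonal map $\mathrm{diag} = (\mathrm{id}, \mathrm{id})$.

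Alternatively — and this is probably the cleaner route to write up — I would argue after passing to the triangulated category $\Ho(\cM)$. Both homotopy pushouts fit into distinguished triangles: $\hocolim(\ast \leftarrow X \to \ast)$ is $\Sigma X$ sitting in the triangle $X \to \ast \to \Sigma X \to \Sigma X$, while $\hocolim(\Sigma X \leftarrow \ast \to \Sigma X)$ is $\Sigma X \vee \Sigma X$ sitting in $\ast \to \Sigma X \amalg \Sigma X \to \Sigma X \vee \Sigma X$. A map of homotopy pushout squares induces a map of the associated triangles, and chasing the octahedral/naturality diagram forces the map $\Sigma X \to \Sigma X \vee \Sigma X$ to be the one that is a section of the fold map $\Sigma X \vee \Sigma X \to \Sigma X$ on each summand, i.e.\ the diagonal. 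The main obstacle is the bookkeeping: one must be careful that the two ``legs'' of the target diagram really are the two inclusions of the wedge summands (not, say, one inclusion and one nullmap), which is where the explicit cofibrant cone model is reassuring; once that is pinned down, the identification of the map as $\mathrm{diag}$ is forced by the universal properties involved.
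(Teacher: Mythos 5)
Your first route is essentially the paper's proof: the paper replaces the span $(\ast\leftarrow X\to\ast)$ by the cofibrant span $(CX\leftarrow X\to CX)$ with $CX=(I,0)\otimes X$, so that the (now strict) pushout $CX\amalg_X CX$ models $\Sigma X$, and then observes that the strict map of spans whose outer legs are the quotients $\pi\otimes 1\colon CX\to S^1\otimes X=\Sigma X$ induces the diagonal on pushouts. Your version reaches the same place via the pasting law, which is fine but more machinery than needed; the one thing worth making explicit in a write-up is exactly which point-set maps $CX\to\Sigma X$ realize the legs $\ast\to\Sigma X$, since that choice is what determines the homotopy class of the induced map.

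Your ``alternative'' triangulated-category route, however, has a gap as stated: knowing that $f\colon\Sigma X\to\Sigma X\vee\Sigma X$ is a section of the fold map only gives $f_1+f_2=\mathrm{id}$ for $f=(f_1,f_2)$, which does not force $f_1=f_2=\mathrm{id}$; the diagonal is characterized by composing with each \emph{projection} (collapse of one wedge summand) being the identity, and verifying that for the map in question requires exactly the same cone-model bookkeeping as the first route. So you cannot avoid the explicit model by passing to triangles; stick with the first argument.
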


\begin{proof}
	Let $CX= (I,0)\otimes X $ be the cone of $X$ and let $i\colon X\lra CX$ be the canonical inclusion, which is a 
	cofibration. We choose a model for $\Sigma X$ as the homotopy pushout  
	\[
	\Sigma X\cong \hocolim(CX \leftarrow X \lra CX).\] 
	In fact, we can take this to be the ordinary pushout $\colim(CX \longleftarrow X\lra CX )$ since $i\colon X\lra CX$ is a cofibration. %From the model that we chose for the homotopy pushout, that is, the following pushout
	%\[
	%\begin{tikzcd}
	%	X \arrow[r] \arrow[d] & CX  \arrow[d] \\
	%	CX    \arrow[r]          & \Sigma X 
	%\end{tikzcd}
	%\]
	%the two maps $CX\lra \Sigma X $ are the inclusions of the top and bottom ``hemispheres`'. 
	From this model we get directly that the induced map on pushouts
	\[\begin{tikzcd}
		CX \arrow[swap]{d}{\pi\otimes 1} & \arrow[swap]{l}{i}  X \arrow{r}{i} \arrow{d} & CX  \arrow{d}{\pi\otimes 1}   \\
		\Sigma X               & \arrow{l}  \ast  \arrow{r}                    & \Sigma X
	\end{tikzcd}
	\]
	where $\pi \colon I\to S^1 $ is the projection is indeed the diagonal map $\mathrm{diag}\colon \Sigma X\lra \Sigma X \vee \Sigma X$. Hence, the induced map of homotopy pushouts is the diagonal map up to natural isomorphism.
\end{proof}

\begin{lemma}\label{ppinjective}
	Let $\cA$ be a heridary abelian category. Let $X,Y, U ,V\in \cAp$  and let $f\colon X\lra Y$ and $g\colon U\lra V$ be injective maps. Then the pushout-product map $f\boxprod g $ is injective.
\end{lemma}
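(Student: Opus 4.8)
The plan is to identify the kernel of $f\boxprod g$ with a first $\operatorname{Tor}$-group and then show it vanishes. Since projective objects of $\cA$ are flat, tensoring with any of $X,Y,U,V$ is exact, so in the square defining $f\boxprod g$ all four edge maps $f\otimes 1_U$, $1_X\otimes g$, $f\otimes 1_V$, $1_Y\otimes g$ are monomorphisms. Write $C=\coker f$ and $D=\coker g$. As $\cA$ is hereditary, every object has projective dimension at most $1$, so the short exact sequences $0\to X\xrightarrow{f}Y\to C\to 0$ and $0\to U\xrightarrow{g}V\to D\to 0$ are projective resolutions of $C$ and $D$.

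First I would realise $f\boxprod g$ as the last differential in the tensor product of these two resolutions. Regarding $f$ and $g$ as two-term complexes in degrees $1,0$, the tensor product complex $(X\to Y)\otimes(U\to V)$ is $X\otimes U\to (X\otimes V)\oplus(Y\otimes U)\to Y\otimes V$ concentrated in degrees $2,1,0$; the cokernel of its degree-$2$ differential is precisely the pushout $X\otimes V\coprod_{X\otimes U}Y\otimes U$, and the induced map out of it to $Y\otimes V$ is $f\boxprod g$. Because $X,Y,U,V$ are flat the two complexes are termwise flat, so the Künneth spectral sequence computes the homology of this total complex as $\operatorname{Tor}^{\cA}_{*}(C,D)$; since $\ker f=\ker g=0$ the relevant entries sit in a single row, giving $H_0\cong C\otimes D\cong\coker(f\boxprod g)$ and, what we need, $H_1\cong\ker(f\boxprod g)\cong\operatorname{Tor}^{\cA}_1(C,D)$. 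The same identification can be reached more directly by applying the Snake Lemma to the map of short exact sequences $0\to X\otimes V\to X\otimes V\coprod_{X\otimes U}Y\otimes U\to C\otimes U\to 0$ into $0\to X\otimes V\to Y\otimes V\to C\otimes V\to 0$, whose outer vertical maps are the identity and $1_C\otimes g$.

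It then remains to prove $\operatorname{Tor}^{\cA}_1(C,D)=0$, and this is the step I expect to be the main obstacle. Equivalently, one must show that the defining square is a pullback, that is, that inside $Y\otimes V$ one has $\operatorname{im}(f\otimes 1_V)\cap\operatorname{im}(1_Y\otimes g)=\operatorname{im}(f\otimes g)$; the inclusion $\supseteq$ is formal, and the non-trivial inclusion is exactly where the hereditary hypothesis on $\cA$ must be used — through $C$ and $D$ admitting length-one projective resolutions and subobjects of the projectives $Y$ and $V$ being again projective, so that the relevant subquotients split off and kill the $\operatorname{Tor}_1$-term. Granting this, the identification of the previous paragraph yields $\ker(f\boxprod g)\cong\operatorname{Tor}^{\cA}_1(C,D)=0$, which is the assertion.
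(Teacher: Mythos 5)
Your reduction of the statement to the vanishing of $\operatorname{Tor}^{\cA}_1(\coker f,\coker g)$ is correct, and it is in substance the same computation the paper performs: the paper tensors $0\to U\xrightarrow{g} V\to \coker g\to 0$ with $X$ and with $Y$, forms the $3\times 3$ diagram whose middle row passes through the pushout, and applies the snake lemma, so that the kernel of $f\boxprod g$ is controlled by the kernel of $f\otimes 1\colon X\otimes\coker g\to Y\otimes\coker g$ — which is exactly your $\operatorname{Tor}_1(\coker f,\coker g)$. Up to that point the two arguments agree.

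The gap is the step you explicitly defer. You never prove $\operatorname{Tor}^{\cA}_1(\coker f,\coker g)=0$, and the sentence about ``subobjects of the projectives $Y$ and $V$ being again projective, so that the relevant subquotients split off'' cannot be made to work as stated: $\coker f$ and $\coker g$ are quotients of $Y$ and $V$, not subobjects, and heredity controls subobjects of projectives, not quotients. This is not a routine omission, because the vanishing does not follow from the stated hypotheses. Take $\cA$ to be (graded) abelian groups, $X=Y=U=V=\bbZ$ and $f=g$ multiplication by $p$: then $\coker f=\coker g=\bbZ/p$, one has $\operatorname{Tor}_1(\bbZ/p,\bbZ/p)=\bbZ/p$, and a direct computation shows that $f\boxprod g$ has kernel $\bbZ/p$. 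So ``granting this'' is precisely granting the entire content of the lemma. The paper closes the hole by asserting that $\coker g$ is projective (justified there by calling it ``a submodule of $V$''), which immediately kills the $\operatorname{Tor}_1$ term via flatness; whatever one makes of that justification, it is an additional input about the objects to which the lemma is applied, and a complete proof must supply it or an equivalent flatness statement for $\coker g$. Your argument stops exactly where the real content begins.
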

\begin{proof}
	Since $g\colon U\lra V$ is monomorphism we have the short exact sequence
	\[ 0\lra U\xrightarrow{g} V \stackrel{j}{\rightarrow} \operatorname{coker}g \lra 0.\]
	Notice that the dimension of the abelian category $\E$-modules is $1$, which implies that $\operatorname{coker}g$ is a projective module since it is a submodule of $V$. Since $X$ is flat, $X\otimes -$ is an exact functor, 
	which means that the sequence
	\[0\lra  X\otimes U \xrightarrow{1\otimes g} X\otimes V \xrightarrow{1\otimes j} X\otimes \operatorname{coker}g \lra 0   \]
	is short exact.
	Consider the diagram
	\[ 
	\xymatrix{ 0 \ar[r]  & X\otimes U \ar[r]^{1\otimes g} \ar[d]_{f\otimes 1}  & X\otimes V \ar[r]^{1\otimes j} \ar[d] & X\otimes \coker g \ar[r] \ar@{=}[d] & 0 \\
		0 \ar[r]  & Y\otimes U \ar[r] \ar@{=}[d]  &  P  \ar[r] \ar[d]_{f\boxprod g}       &  X\otimes \coker g\ar[r] \ar[d]^{f\otimes 1} & 0 \\
		0 \ar[r]  &   Y\otimes U \ar[r]_{1\otimes g}   & Y\otimes V \ar[r]_{1\otimes j}   &  Y\otimes \coker g  \ar[r]  & 0
	}
	\]
	where $P$ is the pushout of $1\otimes g$ and $g\otimes 1$. Since the top left square is cocartesian, the canonical map $\coker(1\otimes g) \stackrel{\cong}{\rightarrow} \coker(Y\otimes U\lra P)$ is an isomorphism, so the middle row is also 
	exact. Now note that the morphism $f\otimes 1\colon X\otimes \coker g\lra Y\otimes \coker g $ is injective since $\coker g$ is projective. Applying the snake lemma gives us 
	that $f\boxprod g $ is a monomorphism.

\end{proof}

\section{Main Result}\label{sec:mainresult}
\subsection{Homotopy Colimit Calculations}\label{sec:hocolimcalcs}
In this section we discuss how the functor $i^*\bbL\pr_!$ interacts with the homotopy colimits of the various diagram categories, giving us the right hand side of the main diagram \ref{bigdiagram}. The main result of the section is the following.
\begin{thm}\label{theoremB}
For any pair of diagrams $(X,Y)\in \Ho(\cM^{\cC_N})\times \Ho(\cM^{\cC_N})$, the homotopy colimit of the diagram $i^*\bbL\pr_!(X\wedge^{\bbL}Y)\in \Ho(\cM^{\cC_N})$ is naturally isomorphic to the smash 
product of the homotopy colimits of $X$ and $Y$, that is,
\[
\hocolim_{\cC_N}\left(i^*\bbL\pr_!(X\wedge^{\bbL}Y)\right)\cong \hocolim_{\cC_N}X \wedge^{\bbL}  \hocolim_{\cC_N}Y.
\]
\end{thm}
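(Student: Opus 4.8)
\emph{Proof strategy.} The plan is to route the left-hand homotopy colimit through $\cD_N$ and then strip off the two occurrences of $\pr$ and the external product using the machinery already assembled. Write $E=\bbL\pr_!(X\wedge^{\bbL}Y)\in\Ho(\cM^{\cD_N})$. I would first establish that restriction along the functor $i\colon\cC_N\to\cD_N$ of \eqref{definitioni} does not change the homotopy colimit, i.e.\ $\hocolim_{\cC_N}(i^*E)\cong\hocolim_{\cD_N}E$ for an arbitrary diagram $E$, and then chain this with Corollary \ref{landoesnotchangecolimit}, applied to the map of direct categories $\pr\colon\cC_N\times\cC_N\to\cD_N$, and with Corollary \ref{hocolimIJ}:
\begin{align*}
\hocolim_{\cC_N}(i^*E)&\cong\hocolim_{\cD_N}E=\hocolim_{\cD_N}\bbL\pr_!(X\wedge^{\bbL}Y)\\
&\cong\hocolim_{\cC_N\times\cC_N}(X\wedge^{\bbL}Y)\cong\bigl(\hocolim_{\cC_N}X\bigr)\wedge^{\bbL}\bigl(\hocolim_{\cC_N}Y\bigr).
\end{align*}
None of the hypotheses used here mention $\cL$ or projectivity, so this works for every pair $(X,Y)$, as the statement demands, and each isomorphism is natural in $X$ and $Y$, so their composite is too.

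The only step that is not pure formalism is the first isomorphism. The functor $i$ identifies $\cC_N$ from Example \ref{CN} with the full subposet $\{\gamma_n,\zeta_n\mid n\in\bbZ/N\bbZ\}$ of $\cD_N$ from Example \ref{DN}: it sends $\beta_n\mapsto\gamma_n$, $\zeta_n\mapsto\zeta_n$, and the only relations among the $\gamma$'s and $\zeta$'s in $\cD_N$ are $\gamma_n\le\zeta_n$ and $\gamma_n\le\zeta_{n+1}$, which reproduces the shape of $\cC_N$. I would then argue that $i$ is homotopy final -- so that $i^*$ preserves homotopy colimits, exactly the principle used, in its adjoint form, in Remark \ref{thetanisrightadjoint}, except that $i$ is not itself an adjoint and one appeals directly to Quillen's Theorem~A. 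Concretely it suffices to check that for each $d\in\cD_N$ the comma category $(d\downarrow i)=\{c\in\cC_N\mid d\le i(c)\}$, with the induced order, has contractible nerve. For $d=\zeta_n$ this is the one-object category $\{\zeta_n\}$; for $d=\gamma_n$ it is the cospan $\zeta_n\leftarrow\beta_n\rightarrow\zeta_{n+1}$, which has an initial object; and for $d=\beta_n$ it is the zig-zag $\zeta_n\leftarrow\beta_n\rightarrow\zeta_{n+1}\leftarrow\beta_{n+1}\rightarrow\zeta_{n+2}$, whose nerve is a contractible tree. Hence $i$ is homotopy final and the first isomorphism follows.

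A small amount of book-keeping makes the formal machinery literally apply. Since objects of $\Ho(\cM^{\cD_N})$ are projectively cofibrant by Convention \ref{homotopycategoryofM}, the restriction $i^*E$ is objectwise cofibrant, so -- as already observed in this paper for the pullbacks $\iota^*(X\wedge^{\bbL}Y)$ of the external smash product -- its homotopy colimit is modelled by the geometric realization of the simplicial replacement, and the cofinality comparison can be read off already on the bar constructions $B(\ast,\cC_N,i^*E)\to B(\ast,\cD_N,E)$. I do not anticipate a real obstacle: the entire content of the theorem is the homotopy-finality of $i$, which amounts to the three elementary comma-category computations above, and everything else is assembled from Corollaries \ref{landoesnotchangecolimit} and \ref{hocolimIJ}.
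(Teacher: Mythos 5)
Your proposal is correct and follows essentially the same route as the paper: the paper likewise chains Corollary \ref{hocolimIJ}, the invariance of homotopy colimits under $\bbL\pr_!$ (Corollary \ref{landoesnotchangecolimit}), and the homotopy finality of $i\colon\cC_N\to\cD_N$, the latter verified by checking contractibility of the slices $\zeta_n/i$, $\gamma_n/i$, $\beta_n/i$ exactly as you do (the paper uses Quillen's conical-contractibility criterion where you observe an initial object or a tree, which is equivalent). Your description of the slice over $\beta_n$ as the five-object zig-zag including $\zeta_{n+2}$ is in fact the accurate one.
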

Recall that the functor 
\[i^*\bbL\pr_!(-\wedge^{\bbL}-)\colon \Ho(\cM^{\cC_N})\x\Ho(\cM^{\cC_N})\to \Ho(\cM^{\cC_N})\] 
is the composition
\[\Ho(\cM^{\cC_N})\x \Ho(\cM^{\cC_N}) \xrightarrow{ \wedge^{\bbL}}  \Ho(\cM^{\cC_N\times \cC_N})\xrightarrow{\bbL\pr_! }  \Ho(\cM^{\cD_N}) \xrightarrow{i^*} \Ho(\cM^{\cC_N}).\]
In order to prove Theorem \ref{theoremB} we will break it apart into smaller pieces. Consider the following diagram.
\begin{equation*}\label{dexiomeros}
\xymatrix{ \Ho(\cM^{\cC_N})\times \Ho(\cM^{\cC_N})    \ar[r]  \ar[d]_{\wedge^{\bbL}}                            &  \Ho(\cM) \\
          \Ho(\cM^{\cC_N\times \cC_N})  \ar[d]_{\bbL\pr_!}    \ar[ur]           & \\
					\Ho(\cM^{\cD_N})    \ar[d]_{i^*}       \ar[uur]                   & \\
					\Ho(\cM^{\cC_N})  \ar@/_/[uuur] 
}
\end{equation*}
The top horizontal functor is the smash product of homotopy colimits of crowned diagrams, that is, $\hocolim_{\cC_N}X \wedge^{\bbL}\hocolim_{\cC_N}Y.$
The three other functors are the homotopy colimit functors

\begin{enumerate}[(i)]
\item $\hocolim_{\cC_N\x \cC_N}\colon  \Ho(\cM^{\cC_N\times \cC_N}) \lra \Ho(\cM)$, \\
\item $\hocolim_{\cD_N}\colon\Ho(\cM^{\cD_N})\lra \Ho(\cM),   $ \\
\item $\hocolim_{\cC_N}\colon \Ho(\cM^{\cC_N}) \lra \Ho(\cM)$.
\end{enumerate}

Theorem \ref{theoremB} asserts that the outer triangle above commutes up to isomorphism. This will follow once we show that all the small triangles commute up to isomorphism.

\begin{lemma}\label{lemma1}
The top triangle and the middle triangle commute, that is, 
\[\hocolim_{\cC_N}X\wedge^{\bbL}\hocolim_{\cC_N}Y\cong \hocolim_{\cC_N\times \cC_N}(X\wedge^{\bbL}Y) \]
and
\[\hocolim_{\cC_N\times \cC_N}(X\wedge^{\bbL} Y)  \cong \hocolim_{\cD_N} \pr_!(X\wedge^{\bbL} Y).\]
\end{lemma}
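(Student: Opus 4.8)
The plan is to deduce both isomorphisms directly from the general machinery of Section \ref{sec:prelims}, so that no new computation is needed; the lemma is essentially a bookkeeping statement combining two results already proved.

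For the first isomorphism, I would apply Corollary \ref{hocolimIJ} with $I=J=\cC_N$. Since $\cC_N$ is a finite poset it is in particular a direct category, so Proposition \ref{totalderivedobjectwise} applies and the external product $-\wedge-\colon \cM^{\cC_N}\times\cM^{\cC_N}\to\cM^{\cC_N\times\cC_N}$ of \eqref{CNobjectwisesmash} is a Quillen bifunctor with total left derived functor $-\wedge^{\bbL}-$. Corollary \ref{hocolimIJ} then gives, naturally in $X$ and $Y$, the isomorphism
\[
\hocolim_{\cC_N\times\cC_N}(X\wedge^{\bbL}Y)\cong (\hocolim_{\cC_N}X)\wedge^{\bbL}(\hocolim_{\cC_N}Y),
\]
which is the first claim read from right to left.

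For the second isomorphism, I would apply Corollary \ref{landoesnotchangecolimit} to the functor $\pr\colon \cC_N\times\cC_N\to\cD_N$ of \eqref{definitionpr} and the object $X\wedge^{\bbL}Y\in\Ho(\cM^{\cC_N\times\cC_N})$. Both $\cC_N\times\cC_N$ and $\cD_N$ are finite posets, hence direct categories, and $\pr$ is a functor between them, so Proposition \ref{leftkanrightkanquillenpair} makes $(\pr_!,\pr^*)$ a Quillen adjunction and $\bbL\pr_!$ is defined; Corollary \ref{landoesnotchangecolimit} then yields $\hocolim_{\cD_N}\bbL\pr_!(X\wedge^{\bbL}Y)\cong\hocolim_{\cC_N\times\cC_N}(X\wedge^{\bbL}Y)$. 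Finally, since the external product is a Quillen bifunctor it sends the pair of (cofibrant) representatives for $X$ and $Y$ to a projectively cofibrant object of $\cM^{\cC_N\times\cC_N}$, as noted in the remarks preceding Subsection \ref{spectralsequence}; on such a representative $\bbL\pr_!$ is computed by $\pr_!$ itself, which is why the lemma may be stated with the strict functor $\pr_!$.

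I do not expect a genuine obstacle here: the entire content is packaged in Corollaries \ref{hocolimIJ} and \ref{landoesnotchangecolimit}. The only point deserving an explicit line is the identification of the derived functor $\bbL\pr_!$ with the strict $\pr_!$ on a cofibrant representative, together with checking that the naturality in Corollaries \ref{hocolimIJ} and \ref{landoesnotchangecolimit} is enough to make the composite isomorphism of Lemma \ref{lemma1} natural in the pair $(X,Y)$ — both of which are routine.
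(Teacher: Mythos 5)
Your proposal is correct and follows the paper's own proof essentially verbatim: the first isomorphism is Corollary \ref{hocolimIJ} with $I=J=\cC_N$, and the second is the fact that homotopy left Kan extension does not change the homotopy colimit (the paper cites \cite[Proposition 4.3.2]{RI20}, which is its own Corollary \ref{landoesnotchangecolimit}). Your extra remark identifying $\bbL\pr_!$ with $\pr_!$ on a cofibrant representative is a sensible clarification but not a departure from the argument.
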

\begin{proof}

The first assertion follows from Corollary \ref{hocolimIJ} as a direct application for $\cC=\cD=\cC_N$. The second assertion follows from the fact that the homotopy colimit of a homotopy left Kan 
extension of a diagram is isomorphic to the homotopy colimit of the diagram itself \cite[Proposition 4.3.2]{RI20}.
\end{proof}

We will prove the above proposition by proving that the functor $i\colon \cC_N\lra \cD_N$ using the following definition, see \cite[Definition 8.5.1]{Riehl}.

\begin{defn}\label{def:homotopyfinal} A functor between small categories $K\colon\cC\lra \cD$ is \emph{homotopy final} 
(or \emph{homotopy terminal}) if for every object $d\in \cD$, the simplicial set $N(d/K)$ is contractible . 
\end{defn}

A convenient way to check whether a poset is contractible 
is given by Quillen \cite[Section 1.5]{QU78}: a poset $\cC$ is \emph{conically contractible} if there is an object $c_0\in \cC$ and a map of posets $f\colon \cC\to \cC$ such that $c\leq f(c) 
\geq c_0$ for every $c\in \cC$. In this 
case one can show that the identity $1_{\cC}$, the map $f$, and the constant map with value $c_0$ from $\cC$ to itself are homotopic (that is to say, their realizations are homotopic), and hence $\cC$ 
is contractible. So, given a diagram $E\in \Ho(\cM^{\cD_N})$, to check that the canonical morphism
\begin{equation*}
\phi_i\colon \hocolim_{\cC_N}i^*E \lra \hocolim_{\cD_N}E
\end{equation*}
is an isomorphism it suffices to check that the slice categories $\alpha_n/i$ of the functor $i\colon \cC_N\to \cD_N$ are contractible for any $\alpha\in \left\{\zeta,\gamma,\beta\right\}$ and any $n\in \bbZ/N\bbZ$. 

We will now apply this to our functor $i\colon \cC_N\lra \cD_N$, which is the inclusion of the two-row crowned diagram into the three-row crowned diagram  \eqref{definitioni}.

\begin{lemma}
The functor $i\colon \cC_N\lra \cD_N$ is homotopy final.
\end{lemma}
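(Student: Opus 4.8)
The plan is to verify the homotopy-finality criterion recalled just above: it suffices to show that for every object $d\in\cD_N$ the nerve of the comma category $d/i$ is contractible. For a map of posets this comma category is again a poset, namely the subposet $\{c\in\cC_N:\ d\leq i(c)\}$ of $\cC_N$, so the first step is to write these subposets down explicitly. Since $i$ acts by $\beta_n\mapsto\gamma_n$ and $\zeta_n\mapsto\zeta_n$, for each $d$ I would first list the elements of $\cD_N$ lying above $d$ and then read off their $i$-preimages in $\cC_N$; there are three cases, $d=\zeta_n$, $d=\gamma_n$, $d=\beta_n$ for $n\in\bbZ/N\bbZ$ (indices read modulo $N$). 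The first two are immediate. Since $\zeta_n$ is maximal in $\cD_N$ and $i(c)=\zeta_n$ forces $c=\zeta_n$, the slice $\zeta_n/i$ is the one-point poset $\{\zeta_n\}$. The elements of $\cD_N$ above $\gamma_n$ are $\gamma_n,\zeta_n,\zeta_{n+1}$, whose $i$-preimages are $\beta_n,\zeta_n,\zeta_{n+1}$, so $\gamma_n/i$ is the subposet of $\cC_N$ with relations $\beta_n<\zeta_n$ and $\beta_n<\zeta_{n+1}$; this has $\beta_n$ as a least element, and I would invoke the elementary fact that a poset with a least (or greatest) element is conically contractible (take $f=\mathrm{id}$ and $c_0$ that element), hence has contractible nerve.

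The case $d=\beta_n$ is where the real work lies. The elements of $\cD_N$ above $\beta_n$ are $\beta_n,\gamma_n,\gamma_{n+1},\zeta_n,\zeta_{n+1},\zeta_{n+2}$, and reading off $i$-preimages one finds that $\beta_n/i$ is the subposet of $\cC_N$ on $\{\beta_n,\beta_{n+1},\zeta_n,\zeta_{n+1},\zeta_{n+2}\}$ whose only nontrivial relations are $\beta_n<\zeta_n$, $\beta_n<\zeta_{n+1}$, $\beta_{n+1}<\zeta_{n+1}$, $\beta_{n+1}<\zeta_{n+2}$. This ``fence'' has neither a least nor a greatest element, and one checks that it is not conically contractible as it stands, so a direct appeal to Quillen's criterion fails. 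The way around it is that this poset has height one, so its order complex coincides with its comparability graph, which is the path $\zeta_n-\beta_n-\zeta_{n+1}-\beta_{n+1}-\zeta_{n+2}$ and is therefore contractible. Alternatively, and more in the spirit of the Quillen framework set up above, one can produce a zigzag $\mathrm{id}\geq f\leq g$ of poset endomorphisms of $\beta_n/i$: let $f$ fix $\beta_n,\beta_{n+1},\zeta_{n+1}$ and collapse $\zeta_n\mapsto\beta_n$, $\zeta_{n+2}\mapsto\beta_{n+1}$, and let $g$ be the constant map at $\zeta_{n+1}$; then $f$ is order-preserving with $\mathrm{id}\geq f\leq g$, its image $\{\beta_n<\zeta_{n+1}>\beta_{n+1}\}$ has a greatest element, and so $B(\beta_n/i)\simeq Bf\simeq Bg\simeq\ast$ by Quillen's homotopy lemma for natural transformations \cite{QU78}.

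Having shown that $N(d/i)$ is contractible for every $d\in\cD_N$, we conclude that $i\colon\cC_N\to\cD_N$ is homotopy final, which is precisely the assertion. I expect the only non-routine points to be the identification and treatment of the $\beta_n/i$ slice, in particular the fact that it is genuinely not conically contractible, so that one must use either the height-one observation or the two-step zigzag above; the other two families of slices reduce to spotting a top or bottom element.
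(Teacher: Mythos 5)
Your overall strategy coincides with the paper's: identify the three families of slices $\zeta_n/i$, $\gamma_n/i$, $\beta_n/i$ and show each has contractible nerve, invoking Quillen's conical-contractibility criterion where it applies. The cases $d=\zeta_n$ and $d=\gamma_n$ match the paper exactly. Where you genuinely depart is the case $d=\beta_n$, and your version is the correct one. Since $\beta_n\leq\gamma_{n+1}\leq\zeta_{n+2}$ in $\cD_N$, the slice $\beta_n/i$ really does contain $\zeta_{n+2}$, so it is the five-element fence $\zeta_n>\beta_n<\zeta_{n+1}>\beta_{n+1}<\zeta_{n+2}$ you describe; the paper records only the four elements $\left\{\beta_n,\beta_{n+1},\zeta_n,\zeta_{n+1}\right\}$ and contracts that smaller poset conically. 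You are also right that the genuine slice is not conically contractible: any $f$ with $c\leq f(c)$ must fix the maximal elements $\zeta_n$ and $\zeta_{n+2}$, which have no common lower bound in this poset, so no choice of $c_0$ works. The extra step you supply is therefore necessary, and both versions of it check out — the order complex is a four-edge path (hence a contractible one-dimensional complex), and the zigzag $\mathrm{id}\geq f\leq \mathrm{const}_{\zeta_{n+1}}$ with $f$ collapsing $\zeta_n\mapsto\beta_n$ and $\zeta_{n+2}\mapsto\beta_{n+1}$ is a valid chain of monotone maps, each step of which induces a homotopy on nerves. So your argument repairs, rather than merely reproduces, the published one.

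One caveat worth making explicit: your description of $\beta_n/i$ tacitly assumes the five listed elements are distinct, i.e.\ $N\geq 3$. For $N=2$ one has $\zeta_{n+2}=\zeta_n$, the slice acquires the additional relation $\beta_{n+1}<\zeta_n$ and becomes the four-cycle $\cC_2$ itself, whose nerve is a circle rather than a point, so the contractibility argument (and indeed the statement) breaks down there. You should either record $N\geq 3$ as a standing hypothesis or check separately that the relevant applications avoid $N=2$.
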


\begin{proof}
We will prove the above proposition by applying Quillen's criterion of conical contractible posets. First, we identity the slice categories $\zeta_n/i, \gamma_n/i$ and $\beta_n/i$ and then we will 
check that they are indeed conically contractible. We start with $\zeta_n/i$. By definition, 
\[\zeta_n/i=\left\{\alpha_n\in \cC_N\colon i(\alpha_n)\geq \zeta_n \right\}= \left\{\zeta_n\right\}.\]
Since this poset contains only one element it is is obviously contractible. The next slice categories are of the form $\gamma_n/i$. By definition,
\begin{equation*}
\gamma_n/i=\left\{\alpha_n\in \cC_N\colon i(\alpha_n)\geq \gamma_n\right\},
\end{equation*}
that is, $\gamma_n/i$ is the poset
\begin{equation*}\xymatrix{ \zeta_{n} & \zeta_{n+1}. \\
                             \beta_n \ar[u] \ar[ur]  &   }
\end{equation*}
We choose $\beta_n$ and $1\colon \gamma_n/i\lra \gamma_n/i$. Directly from above we can see that $\gamma_n/i$ is conically contractible.
The last case are the slices $\beta_n/i$. By definition,
\begin{equation*}\beta_n/i=\left\{\alpha_n\in\cC_N\colon i(\alpha_n)\geq \beta_n\right\},
\end{equation*}
which is the poset
\[\xymatrix{\zeta_{n}  & \zeta_{n+1} \\
           \beta_{n} \ar[u] \ar[ur]     & \beta_{n+1}. \ar[u]   }\]
We choose $\beta_n$ and the map of $\beta_n/i\lra \beta_n/i$ as follows.
\[\zeta_{n}\mapsto \zeta_{n}, \zeta_{n+1} \mapsto \zeta_{n+1}, \beta_{n}\mapsto \beta_{n}, \beta_{n+1}\mapsto \zeta_{n+1}\] 
With these choices, we can see that 
the poset $\beta_n/i$ is conically contractible.
\end{proof}

Finally, we obtain the commutativity of the bottom triangle of our big diagram, which also concludes the proof of Theorem \ref{theoremB}. Recall the functor $i\colon \cC_N\to \cD_N$ from
\begin{cor}\label{lemma3}
The bottom triangle of (\ref{bigdiagram}) commutes, that is, 
\begin{equation*}
\hocolim_{\cC_N}i^*E\cong \hocolim_{\cD_N}E.
\end{equation*}
\qed
\end{cor}

\subsection{Proof of Main Theorem}

Finally, we are in a position to assemble all our work into our main theorem. 

\begin{thm}\label{thm:main}
Let $\cA$ be a hereditary abelian category, and $\cM$ be a monoidal stable model category such that Franke's functor
\[
\cR\colon (\dtwc,\otimes^{\bbL})\lra (\Ho(\cM),\wedge^{\bbL})
\]
exists and is an equivalence.
Then $\cR$ preserves the monoidal products up to a natural isomorphism, that is, 
\[\cR(M_* \otimes^{\bbL} M_*) \cong \cR( M_*) \wedge^{\bbL} \cR(M_*).   \]
\end{thm}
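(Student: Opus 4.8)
The plan is to read the statement off the large diagram \eqref{bigdiagram}, for which all the genuine work has already been done: by \eqref{recfunctor} the functor $\cR$ is the composite $\hocolim_{\cC_N}\circ\,\cQ^{-1}$ with $\cQ^{-1}\colon \twc \to \cL \subseteq \Ho(\cM^{\cC_N})$, the commutativity (up to natural isomorphism) of the left-hand square of \eqref{bigdiagram} is the content of Theorem \ref{theoremA} in the form of Proposition \ref{propB}, and the commutativity of the right-hand triangles is Theorem \ref{theoremB}. Gluing these two pieces along the middle column $i^*\bbL\pr_!(-\wedge^{\bbL}-)$ gives $\cR(M_*\otimes^{\bbL}N_*)\cong \cR(M_*)\wedge^{\bbL}\cR(N_*)$.

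First I would reduce to the case in which $M_*$ and $N_*$ are degreewise projective twisted complexes. Since $\cA$ has enough projectives and is hereditary, cofibrant objects in the projective model structure on $\twc$ from Example \ref{twistedcomplexes} are degreewise projective (hence flat), and $-\otimes^{\bbL}-$ is computed by replacing both variables by such cofibrant objects; the comparison maps between $M_*$ and its cofibrant replacement are natural weak equivalences, so it suffices to establish the isomorphism for degreewise projective $M_*,N_*$. For such $M_*$, set $X:=\cQ^{-1}(M_*)$ and $Y:=\cQ^{-1}(N_*)$, objects of $\cL$. Because $\cA$ is hereditary, the short exact sequence \eqref{sexci} shows that $C_*(X)$ is degreewise projective if and only if every $Z^{(i)}(X)$ and $B^{(i)}(X)$ is projective, i.e.\ if and only if $F_*(X_{\beta_i}),F_*(X_{\zeta_i})\in\cAp$ for all $i$, and similarly for $Y$; so $X$ and $Y$ satisfy the hypotheses of Theorem \ref{theoremA}.

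Next I would chain the two main theorems. By Theorem \ref{theoremA}/Proposition \ref{propB} we have $i^*\bbL\pr_!(X\wedge^{\bbL}Y)\in\cL$ together with a natural isomorphism $\cQ\bigl(i^*\bbL\pr_!(X\wedge^{\bbL}Y)\bigr)\cong \cQ(X)\otimes\cQ(Y)=M_*\otimes N_*$; applying the equivalence $\cQ^{-1}$ gives $\cQ^{-1}(M_*\otimes N_*)\cong i^*\bbL\pr_!(X\wedge^{\bbL}Y)$ in $\cL$. Now apply $\hocolim_{\cC_N}$ and invoke Theorem \ref{theoremB} (whose proof runs through Proposition \ref{lemma3} and Corollary \ref{hocolimIJ}):
\[
\cR(M_*\otimes^{\bbL}N_*)=\hocolim_{\cC_N}\cQ^{-1}(M_*\otimes N_*)\cong \hocolim_{\cC_N}i^*\bbL\pr_!(X\wedge^{\bbL}Y)\cong \hocolim_{\cC_N}X\,\wedge^{\bbL}\,\hocolim_{\cC_N}Y=\cR(M_*)\wedge^{\bbL}\cR(N_*).
\]
Every isomorphism here is natural in $X$ and $Y$, and these are functorial images of $M_*,N_*$ under $\cQ^{-1}$, so the composite is natural in $M_*$ and $N_*$; combined with the functorial cofibrant replacements used in the reduction step this yields the asserted natural isomorphism for arbitrary $M_*,N_*$, whence also on $\dtwc$.

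At the level of this final assembly there is no real obstacle beyond bookkeeping: the substance of the argument is entirely in Theorem \ref{theoremA}, namely the spectral-sequence computation of Proposition \ref{propA} that keeps $i^*\bbL\pr_!(X\wedge^{\bbL}Y)$ inside $\cL$, the cone identification of Proposition \ref{newconesE} on underlying graded objects, and the disk reduction controlling the differentials. The one point that genuinely needs to be checked with care in this section is that the reduction to degreewise projective (equivalently, cofibrant) representatives is compatible with both $\otimes^{\bbL}$ and $\wedge^{\bbL}$ and that $\cR$ descends along $\twc\to\dtwc$ as the composite $\hocolim_{\cC_N}\circ\,\cQ^{-1}$ — both of which are formal given \eqref{recfunctor}, the monoidal model structure on $\twc$ from Example \ref{twistedcomplexes}, and the fact that $\cQ$ is an equivalence.
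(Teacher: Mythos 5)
Your proposal is correct and follows essentially the same route as the paper's proof: reduce to cofibrant (degreewise projective) representatives, use hereditariness of $\cA$ to see that $X=\cQ^{-1}(M_*)$ and $Y=\cQ^{-1}(N_*)$ satisfy the projectivity hypotheses of Theorem \ref{theoremA}, and then chain Theorem \ref{theoremA} with Theorem \ref{theoremB} through $\cR=\hocolim_{\cC_N}\circ\,\cQ^{-1}$. The only cosmetic difference is that the paper verifies the hypotheses of Theorem \ref{theoremA} by noting directly that $Z_*(X)\cong\ker d$ and $B_*(X)\cong\operatorname{im}d$ are subobjects of a projective, whereas you argue through the short exact sequence \eqref{sexci}; both rest on the same hereditariness argument.
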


\begin{proof}
We assemble our proof along the lines of the diagram \ref{bigdiagram}.
Let $M_*$ and $N_*$ be objects in $\dtwc$. By Convention \ref{homotopycategoryofM}, both objects are cofibrant.
Since $M_*$ is cofibrant, the functor 
\[M_*\otimes -\colon  \twc   \lra  \twc  \] 
is left Quillen, see \cite[Remark 4.2.3]{Hovey}, which means it preserves cofibrant objects. Since both objects are cofibrant, the tensor product 
$M_*\otimes N_*$ represents the derived tensor 
product in $(\dtwc,\otimes^{\bbL})$ and in particular it also cofibrant. Recall from Example \ref{twistedcomplexes} that the cofibrant objects in $\twc$ are the projective objects in $\cA$. This 
means, in particular, that
$M_*, N_*$ and $M_*\otimes N_*$ all belong to $\cAp$. We recall some notation from Section \ref{sec:monoidalQ}. Given a crowned diagram $X\in \cM^{\cC_N}$ as follows
\begin{equation*}\xymatrix{  &X_{\zeta_0}  &  X_{\zeta_1}  & \ldots & X_{\zeta_{N-1}}   \\ 
                             &X_{\beta_0} \ar[u] \ar@{.>}[urrr] & \ar[u]  \ar[ul] X_{\beta_1}   & \ar[ul] \ldots & \ar[u] \ar[ul] X_{\beta_{-1}} }
	\end{equation*}
we set
\[Z^{(n)}(X)=F_*(\Xz{n}),\  B^{(n)}(X)=F_*(\Xb{n}),\  C^{(n)}(X)= F_*(\cone(\Xb{n-1}\lra \Xz{n})) .\]
Given $(M_*,d)\in \twc$ one can construct a crowned diagram $X$ in $\cL$ such that 
\begin{align*}
(C_*(X),d)\cong(M_*,d)\\
Z_*(X)\cong\ker d \\
B_*(X)\cong \operatorname{im} d.
\end{align*} 
By the discussion above, $M_*\in \cAp $. By assumption, $\cA$ is a hereditary abelian category, in other words, 
$\operatorname{gl.dim} \cA =1 $. This implies that $\operatorname{ker} d, \operatorname{im} d \in \cAp$ since they are submodules of $M_*$.

 Hence, for the crowned diagram $X\cong \cQ^{-1}(M_*)$ we have $F_*(\Xa{n})\in \cAp$ for every $n\in \bbZ/N\bbZ$ and any $\alpha\in \left\{\zeta,\beta\right\}$. Similarly, 
for the dg-object $(N_*,d)$ we get a crowned diagram $Y\cong \cQ^{-1}(N_*)$ such that $F_*(\Ya{n})\in \cAp$ for every $n\in \bbZ/N\bbZ$ and any $\alpha\in \left\{\zeta,\beta\right\}$. 

Now, by Theorem \ref{theoremA},
\[ \cQ(i^*\bbL\pr_!(X\wedge^{\bbL}Y))\cong \cQ(X)\otimes \cQ(Y)=M_*\otimes N_*  \]
and by Theorem \ref{theoremB},
\[
\hocolim_{\cC_N}\left(i^*\bbL\pr_!(X\wedge^{\bbL}Y)\right)\cong \hocolim_{\cC_N}X \wedge^{\bbL}  \hocolim_{\cC_N}Y.
\]
Finally, we recall that Franke's realization functor \eqref{recfunctor} is defined by
\[\cR= \hocolim_{\cC_N}\circ \cQ^{-1},\]
which concludes the proof.

\end{proof}

The assumptions of Theorem \ref{thm:main} are satisfied in the following instances.

\begin{example}
From \cite[Corollary 5.2.1]{PA12} we know that 
\[
\cR: \cD(\pi_*R) \longrightarrow \cD(R)=\Ho(R\mbox{-mod})
\]
is an equivalence for a ring spectrum $R$ with $\pi_*(R)$ concentrated in degrees that are multiples of some $N>1$ and global dimension of $\pi_*(R)$ equal to 1. This satisfies the assumption of our Theorem \ref{thm:main} and applies to $R= KU$, $R=KU_{(p)}$, $R=E(1)$ (complex $K$-theory), and $R=k(n)$ (connective Morava $K$-theory). 
\end{example}

\begin{example}
By \cite{Fran} and Roitzheim \cite{Ro} we know that
\[
\cR: \dtwc \longrightarrow \Ho(L_1 \mathcal{S})
\]
is an equivalence. Here, $\cA$ is the category of $E(1)_*E(1)$-comodules, and $L_1 \mathcal{S}$ is a suitable category of spectra equipped with the $K$-local model structure at an odd prime. 
Note that as mentioned in Example \ref{twistedcomplexes}, that while $\cA$ does not have enough projectives, all our proofs also work when working with comodules whose underlying $E(1)_*$-module is projective, see also the first author's thesis \cite{NN}.
\end{example}

%\bibliographystyle{alpha}
%\bibliography{literature}

\end{document}